\newcommand{\ee}{\end{equation}}
\newcommand{\eea}{\end{eqnarray}}
\newcommand{\bean}{\begin{eqnarray*}}
\newcommand{\eean}{\end{eqnarray*}}
\newif\ifpctex
\newcommand{\noi}{\noindent}
\newcommand{\ve}{\varepsilon}
\newcommand{\wt}{\widetilde}
\newtheorem{theorem}{Theorem}
\newtheorem{remark}{Remark}
\newtheorem{example}{Example}
\newcommand{\qed}{{}\hfill {}
\hfill{$\square$}\vspace{0.3cm}\pagebreak[2]\par}
\newtheorem{proposition}{Proposition}[section]
\newtheorem{definition}[proposition]{Definition}
\newtheorem{lemma}[proposition]{Lemma}
\newtheorem{corollary}[proposition]{Corollary}
\newenvironment{proof}{\par\noindent{\bf Proof\ }}{\qed }
 \newcommand{\rand}[1]{}
\newcommand{\Rand}[1]{\marginpar{#1}}
\newcommand{\be}[1]{\Rand{\vspace{0,6cm}\tt #1}\begin{equation}\label{#1}}
\newcommand{\bea}[1]{\Rand{\vspace{0,7cm}\tt
#1\vspace{-0,7cm}}\begin{eqnarray}\label{#1}} \marginparwidth2.5cm
\newcommand{\beL}[1]{\Rand{\vspace{0,6cm}\tt #1}\begin{lemma}\label{#1}}
\newcommand{\beD}[1]{\Rand{\vspace{0,6cm}\tt #1}\begin{definition}\label{#1}}
\newcommand{\beT}[1]{\Rand{\vspace{0,6cm}\tt #1}\begin{theorem}\label{#1}}
\newcommand{\beP}[1]{\Rand{\vspace{0,6cm}\tt #1}\begin{proposition}\label{#1}}
\newcommand{\beC}[1]{\Rand{\vspace{0,6cm}\tt #1}\begin{corollary}\label{#1}}
\newcommand{\suml}{\sum\limits}
\newcommand{\N}{\mathbb{N}}
\newcommand{\I}{\mathbb{I}}
\begin{document}

\title{Multilevel mutation-selection systems and set-valued duals} \maketitle

\begin{center}
\author{{\bf Donald A. Dawson$^{(1)}$  }}
\end{center}
%\date{\small  \today}

%\maketitle

\begin{abstract} {A class of measure-valued processes which model multilevel multitype populations
undergoing mutation, selection, genetic drift and spatial migration is considered.   We investigate
the qualitative behaviour of models with  multilevel selection and  the interaction between the
different levels of selection.  The  basic tools in our analysis include the martingale problem
formulation for measure-valued processes and  a generalization of the function-valued and
set-valued dual representations introduced in Dawson-Greven (2014).  The dual is a powerful tool
for the analysis of the ergodic behaviour of these processes and  the study of evolutionary systems
which model phenomena including altruism, the emergence of cooperation and more complex
interactions.}

\end{abstract}

\vfill

 \noi {\bf Keywords:} Multilevel measure-valued process, multilevel selection, set-valued dual.\vspace*{0.5cm}

\noi {\bf AMS Subject Classification: 60J70, 92D25}
\medskip

\noindent \thanks{\bf This research is supported by NSERC}

  \vspace*{0.5cm}

\noi {\footnotesize $^{1)}$ {School of Mathematics and Statistics, Carleton University, Ottawa K1S
5B6, Canada\\ e-mail: ddawson@math.carleton.ca}}

\newpage

\tableofcontents

\newpage

\section{Introduction}

Multitype populations are naturally modeled as measure-valued processes.  In this paper we consider
a class of multilevel measure-valued processes which model ensembles  of subpopulations with
mutation and selection at the subpopulation level and possible death and replacement of
subpopulations. In particular this includes mathematical models of multilevel selection which has
been the subject of considerable debate in the evolutionary biology literature. Before introducing
our multilevel  models we begin with a brief review of some of this literature.

\subsection{Hierarchical population structure}

The hierarchical structure of populations plays a fundamental role in the biological and social
sciences. In evolutionary biology and ecology the hierarchy includes ecosystems, community,
species, organism, genes and in the social sciences we have cites, regions, nations, etc.  These
are systems in which at each level of the hierarchy we have a collection of elements of the next
lower level in the hierarchy.  The description of a unit at a given level in the hierarchy involves
the distribution of the different characteristics of the individuals at the next lower level in the
hierarchy.

\subsection{Historical remarks  on hierarchy in population genetics and evolutionary biology}

Biological evolution can be viewed in terms of a hierarchy of levels of organisation going from the
molecular level to the species level and social of groups of members of a species. A natural
question is to what extent does the Darwinian mechanism of variation and selection of fitter types
in a competitive environment play a role at the various levels.
\bigskip

 \noindent  An early application of group selection was by Wynne-Edwards (1962) \cite{WE62} who used it to explain adaptations and social behaviour of
 animals. Subsequently
 G.C. Williams (1966) \cite{W66}  made a highly critical analysis of  group selection which was very
 influential.
  John Maynard Smith
(1964),(1976)  (\cite{MS64},\cite{MS76}) considered both   group selection and kin selection which
was introduced by W.D. Hamilton (1964) \cite{H64} and concluded that there may be conditions under
which group selection is effective.   In the subsequent decades there has been intense debate among
evolutionary biologists about the extent to which evolution has been shaped by selective pressures
acting at the level of groups of individuals. \bigskip

\noindent  In recent years the role of multilevel selection has re-emerged in a number of contexts
including the emergence of life (Szathm\'ary and Demeter \cite{SD-87}), structural complexity
(G\"{o}rnerup and Crutchfield (2008) \cite{GP-06}),
 prebiotic evolution (Hogeweg and Takeuchi \cite{HT-03}), plasmid replication in bacteria (Paulsson (2002) \cite{P-02}), evolution of cooperation
 (Traulsen and Nowak \cite{TN-06}) and sociobiology (Wilson and Wilson) \cite{WW}.  In the study of cultural evolution Boyd and Richardson \cite{BR} suggest that
 interdemic group selection can be important when there are multiple stable equilibria at the deme level and the emergence
 of higher level equilibria occurs.
Moreover these ideas are relevant in the context of spatially structured populations and
evolutionary ecology (see Lion and van Baalen \cite{LVB-08}, Lion et al \cite{LJD-11}). A detailed
study of host-pathogen systems in the framework of multilevel selection was carried out by Luo
(2013) \cite{S-13}, \cite{SRMK-12} and Luo and Mattingly \cite{LM15} who demonstrated that a phase
transition between  the dominance of selection at the different levels can occur as the model
parameters are varied. Multilevel selection also underlies current research in the development of
complex human societies (see e.g. Turchin et al. \cite{Tur}).  Several books have been written on
the question of the levels of selection. These include Brandon and Burian \cite{BB-84}, Sober and
Wilson \cite{S-W}, Keller \cite{Kel-99} and Okasha \cite{O-06}.  An number of other recent research
papers on multilevel selection are included in the References.
\bigskip

\noi We end with a quotation of Leigh \cite{L10} that provides a useful perspective on these
questions:
 {\em ``These conditions ( e.g. he quotes Kimura's conditions - see (\ref{kcond})) seem so wonderfully improbable
that, following Williams (1966), most biologists have focused almost exclusively on individual
selection. Improbability, however, does not mean impossibility. Group selection capable of
overwhelming selection within groups, played a crucial role in some major transitions ...''}.

\bigskip

\noi An objective of this research is to develop tools to identify conditions under which higher
level selection is relevant for the class of mathematical models we consider.

\bigskip

\subsubsection{A multideme model with two types of individuals}

In order to introduce the main ideas we briefly review a formulation of group selection given by
Aoki \cite{A-82}). This begins with a countable collection of demes where each deme is a population
of $n$ individuals which are either type A or type B. Type B individuals are altruistic and add to
the fitness of the deme.  The life cycle of a deme involves four discrete events, namely,
migration, reproduction, extinction and recolonization. In the reproduction stage, within each deme
the population undergoes weighted finite population resampling in which type B has fitness $-s$
(with $s>0$). The probability that a deme suffers extinction is a monotone decreasing function of
the proportion of type B individuals it contains, that is, the fitness of the deme increases as a
function of the number of altruistic individuals it contains. In the migration stage a random
number of individuals within a deme are replaced by individuals chosen at random from the total
population pool.  Aoki then obtained a recursion formula for the probability distribution of the
number of individuals of type B per deme over successive life cycles  and discussed the question of
the long time behavior of this distribution, in particular whether or not the proportion of type B
goes to zero or not.

\subsubsection{A diffusion process model of multilevel selection with two types}

The class of Wright-Fisher diffusion processes plays an important role in population genetics.
Following Aoki \cite{A-82}), an analogous extension of the Wright-Fisher process was introduced by
Kimura (1983) \cite{K-83}  with  alleles $A$ and $B$ distributed in an infinite number of competing
demes. It is assumed that  $B$ is the altruistic allele which has a selective disadvantage $s_1$
but which is beneficial for a deme in competition with other demes,
 namely, a deme having frequency $x$ for $B$ has advantage $s_2(x-\int y\nu(dy))$ where $\nu(dy)$ is the distribution of the frequency of type $B$ individuals
  over the set of demes.
This leads to the integro-differential equation for the dynamics of the density $\{\wt\nu(t,x)
\}_{t\geq 0}$ where $\nu(t,dx)=\wt\nu(t,x)dx$ :

\be{}\label{Kim} \frac{\partial \wt\nu(t,x)}{\partial t}
=\frac{\gamma_1}{2}\frac{\partial^2}{\partial x^2}\left(
x(1-x)\wt\nu(t,x)\right)-\frac{\partial}{\partial x}(M(t,x)\wt\nu(t,x)) +s_2\left(x-\int
y\wt\nu(t,y)dy\right)\wt\nu(t,x)\ee where
\[ M(t,x)=m_{21}(1-x)-m_{12}x+c\left(\int y\wt\nu(t,y)dy-x\right)-s_1 x(1-x), \]
and $m_{12},m_{21}$ are the mutation rates $1\to 2$, $2\to 1$ respectively,  $c$ is the rate of
migration between colonies and the resampling rate $\gamma_1$ is inversely proportional to the
effective population size at a deme.  This model will be discussed in detail in subsection
\ref{sss.rand1} including Kimura's analysis which was based methods of ordinary differential
equations as well as the analysis using the dual representation which will be developed in section
3.  The duality method is not restricted to two-type systems (as is the case for the ode method)
and can be used to study general multitype systems.

\subsection{Multilevel multitype measure-valued processes}

The natural framework for multilevel multitype models with random effects at different
levels is the setting of multilevel measure-valued processes. Models of this type were first
developed for multilevel branching systems in Dawson and Hochberg \cite{DH-91} and Etheridge
\cite{ET-93}.  The long-time behaviour of multilevel measure-valued processes is investigated in Wu
\cite{Wu-94}, Dawson and Wu \cite{DW-96}, and Gorostiza, Hochberg and Waklbinger \cite{GHW-95}.  In
particular two-level measure-valued processes have state spaces of the form
$\mathcal{M}(\mathcal{M}(E))$ for some Polish space $E$ where $\mathcal{M}(E)$ denotes the space of
Borel measures on $E$.  In this paper we work with an analogous class of two level probability
measure-valued processes formulated in terms of a well-posed martingale problem which generalizes
the Kimura model to systems with more than two types, more complex interactions and to the
diffusion limit of systems with finitely many demes.

\subsection{Outline of the paper}

The main objectives of this paper are to formulate a general measure-valued framework for
multilevel population systems with mutation and selection and to develop the method of duality for
multilevel measure-valued stochastic processes with applications to population systems with
multilevel selection.  In section 2 we introduce the class of models characterized as solutions to
a well-posed  martingale problem. In Section 3 we introduce the dual processes used to establish
that the martingale problems are well-posed and to compute joint moments. These are given by the
multilevel generalization of the class of function-valued and set-valued dual processes introduced
in Dawson and Greven \cite{DG-14}. In Section 4 we consider the long-time behavior of systems with
two types and with two levels of selection. In Section 5 we introduce some more complex models of
systems with $K\geq 2$ types and with multilevel selection as well as further possible extensions
of these models and methods.

\section{Multitype-multilevel mutation-selection models}

\subsection{A two level finite population model}

We begin with a two-level finite population model given by an exchangeably interacting system of
Moran particle systems with selection at both levels. We assume that  the higher level  fitness of
a subpopulation can depend on the distribution of level I types within the subpopulation.  In other
words at the group level the fitness $V_2(\mu)$ of a subpopulation described by its distribution
$\mu$  over the space of types,  is a result of a network of interactions. Then the resulting
distribution of the collection of  subpopulations $\{\mu_i\}_{i\in S}$ is formulated in the setting
of multilevel measure-valued processes which provide a natural setting for the study of
hierarchical systems of this type.

We begin with a simple colony containing $N_1$ individuals described by a Moran model. \ Each
individual
has type in $\mathbb{I}=\{1,\dots,K\}.$ We let%
\bea{}
&& n_{k}    :=\text{ number of individuals of type }k\in\{1,\dots,K\}\\
&&N_1    =\sum_{k=1}^{K}n_{k}%
\eea and we think of the normalized vector $\frac{1}{N_1}(n_{1},\dots,n_{K})$ as an element of
$\mathcal{P}(\mathbb{I})$, the space of probability measures on $\{1,\dots,K\}$,
\[ X=\frac{1}{N_1}\sum_{k=1}^Kn_k\delta_k,\]
where the single atom measure $\delta_k$ represents an individual of type $k$.
\bigskip

The dynamics of a simple colony is given by a continuous time
Markov chain, $\{X_{t}:t\geq0\}$ with state space $\mathcal{P}%
(\mathbb{I}).$

The dynamics includes:
\begin{itemize}
\item Mutation:   given by transition rates $\{m_{ij}\}_{i,j\in \mathbb{I}}$, that is, the rate at which an
 individual of type $i$ is replaced by an individual of type $j$
\item Sampling: at rate $\frac{\gamma_1}{2}$  an individual of type $i$ is replaced by an individual of type $j$ where $j$ is chosen
 from the empirical distribution $X$
 \item Selection with fitness function $V_1:\mathbb{I}\to [0,1]$ and intensity $s_1$.
\end{itemize}

The resulting transitions for the probability-measure-valued process are given by%
\bea{}
&&\mu   \rightarrow\mu-\frac{1}{N_1}\delta_{i}+\frac{1}{N_1}\delta_{j}\text{ at rate \ }m_{ij}\mu(i)\\
&&\mu   \rightarrow\mu-\frac{1}{N_1}\delta_{i}+\frac{1}{N_1}\delta_{j}\text{ at rate
}((N_1-1)\frac{\gamma_1}{2}+s_{1}{V_{1}(j)})\mu
(i)\mu(j)%
\eea for $i,j\in \mathbb{I}$. Note that these assumptions result in a rate of change
${dX(t,i)}/{dt}$ due to mutation and selection of order ${1}/{N_1}$ which turns out to be the same
as the order of the sampling fluctuations when $\gamma_1>0$. This corresponds to the case of weak
selection in population genetics and the {\em diffusion limit}  below will involve a time speed-up
by a factor of $N_1$. In population genetics the parameter $\gamma_1$ is viewed as {\em inverse
effective population size} (see Remark 5.4 in \cite{D10}) and is a measure of the population size
in relation to the selection intensity in the finite population model.

We now consider a collection of $N_2$ colonies (demes) \ each consisting of $N_1$ individuals with
internal dynamics within each colony  given as above. In addition (as in the Aoki model) there is
an interaction between colonies via migration. To model this, individuals within a colony die and
are replaced at rate $c>0$  by a new individual with type  given by that of a randomly chosen
individual in a randomly chosen colony.

 The final mechanism is death (extinction) and replacement of colonies following the same
mechanism as the sampling mechanism within colonies. That is, a colony dies and is replaced by a
copy of a randomly chosen colony. \ In addition  we can include deme-level selection using a level
II fitness function $V_{2}(\mu)$ and selection intensity $s_2$. For
example, we can take a linear fitness function of the form%
\[
V_{2}(\mu)=\int v_{2}(x)\mu(dx)
\]

We then consider the empirical measure%
\[
\Xi_{N_2,N_1}(t):=\frac{1}{N_2}\sum_{i=1}^{N_2}\delta_{\mu_{i}(t)}\in \mathcal{P}%
(\mathcal{P}(\mathbb{I}))
\]
where $\mu_{i}$ denotes the state of the ith colony, namely,
\[ \mu_i=\frac{1}{K}\sum_{j=1}^{K} \frac{n_{i,j}}{N_1}\delta_j\]
and $n_{i,j}$ denotes the number of type $j$ individuals in the ith
colony.

The resulting transitions due to the level one dynamics are of the form

\[
\nu\rightarrow \nu +\frac{1}{N_2}(\delta_{\mu-\frac{\delta_i}{N_1}+\frac{\delta_j}{N_1}}-\delta
_{\mu})
\]%
at rate $m_{ij}\mu(i)+\left((N_1-1)\frac{\gamma_1}{2}+s_{1}V_{1}(j)\right)\mu(i)\mu(j)\nu(d\mu)$
\smallskip

The resulting transitions due to level II sampling and selection for the measure-valued process are given by:

%\[
%\Xi\rightarrow(\Xi+\frac{1}{N_2}(-\delta_{\mu}+\delta_{\mu-\delta_{x}+\delta
%_{y}}))\ \ \text{rate }\int\mu(dy)\Xi(d\mu)\text{ }
%\]%
\[
\nu\rightarrow(\nu+\frac{1}{N_2}(-\delta_{\mu_1}+\delta_{\mu_2}))\text{ rate
}\left(s_2{V_{2}(\mu_2)}+\frac{\gamma_{2}}{2}(N_2-1)\right)\nu(d\mu_1)\nu(d\mu_2)
\]

We can then consider the limiting behaviour as $N_1$ or $N_2$ go to $\infty$, or we can allow them
to go to infinity simultaneously with for example $N_2=\eta N_1$. In the following subsections we
will first consider the limit as $N_1\to\infty$ for a finite system of $N_2$ demes leading to a
system of interacting Fisher-Wright diffusions. We then consider the exchangeable system of
Fisher-Wright diffusions with additional death and random replacement of demes as described above
and then obtain a two level measure-valued diffusion process (called the two level Fleming-Viot
model) by letting $N_2\to\infty$.

\subsection{The martingale problem formulation}

The framework in which we specify the different stochastic models  is the class of
probability-valued Markov  processes $X(t)\in \mathcal{P}(E_1)$ where $E_1$ is a Polish space. Let
$D_{E_1}([0,\infty))$, ($C_{E_1}([0,\infty))$) denote the class of c\`{a}dl\`{a}g (resp.
continuous) functions from $[0,\infty)$ to $E_1$. We denote by $\{\mathcal{F}_t\}_{t\geq 0}$ the
natural filtration of $\sigma$-algebras on these spaces.
 The probability law   $P\in
\mathcal{P}(D_{E_1}([0,\infty)))$ is said to be a solution of the {\em martingale problem with
generator $(G,D(G))$}, where $G$ is a linear operator on $D(G)\subset C(E_1)$ and $D(G)$ is
measure-determining on $E_1$, if
 \bean{}\label{duality1} M_F(t):=&& F(X(t))-\int_0^t GF(X(s))ds \\&&\text{ is an }\mathcal{F}_t\text{-adapted }P\text{
martingale} \text{  for all }F\in D(G).\eean  The martingale problem method is used to characterize
stochastic processes of interest in many
applications. The method (which we will also use below) consists of four steps:\\
(1) to construct a sequence of approximating processes with probability laws $P_n\in$
$\mathcal{P}(D_{E_1}([0,\infty)))$ that satisfy
  some simple martingale problems,\\
(2) to show that the laws of the processes are tight, that is, relatively compact in
$\mathcal{P}(D_{E_1}([0,\infty)))$\\
(3) to show limit points of the $P_n$ satisfy the martingale problem defined by $(G,D(G))$, and
\\(4) to prove that there is a unique solution to this martingale problem thus characterizing the
limiting probability law $P$ of the process of interest. We will use this method to define the
Fleming-Viot process that models selection at two levels.
\bigskip

\noi A key tool used to establish the uniqueness of solutions is {\em duality}. This is achieved by
constructing a {\em dual process } $\mathcal{G}_t$ with state space $E_2$ and function
$F:\mathcal{P}(E_1)\times E_2 \to \mathbb{R}$ such that the functions $\{F(\cdot, g),g\in E_2\}$
are in $D(G)$ and are measure-determining on $E_1$, and  the duality relation: \bean{}
E_{X(0)}(F(X(t),\mathcal{G}_0))=E_{\mathcal{G}_0}(F(X(0),\mathcal{G}_t))\eean is satisfied for all
$\mathcal{G}_0\in E_2$ for all $X(0)\in \mathcal{P}(E_1)$ (where the right side denotes the
expectation with respect to the law of the $\mathcal{G}_t$ process). The class of set-valued dual
processes we use in the study of multilevel mutation selection systems is developed in detail in
Section 3. In addition to using the dual to establish that the martingale problem has a unique
solution it will be used below to compute moments, to obtain fixation probabilities and to prove
ergodicity.
\medskip

\noi
For general background on the martingale problem formulation see \cite{D93}, \cite{D10} and
\cite{EK2}.

\subsection{Diffusion process limits}

In this subsection  we will identify  the {\em  limit} of the process $\{\Xi_{N_2,N_1}(t)\}_{t\geq
0}$ as $N_1\to\infty$ for fixed $N_2<\infty$.

%\bea{}GF(\nu)  &  =\int\int F^{\prime\prime}(\nu)[\nu(d\mu)\delta_{\mu}(d\mu
%^{\prime})-\nu(d\mu)\nu(d\mu^{\prime})]+F^{\prime}(\nu)(L_{1}\mu+\frac{1}%
%{2}L_{2}F^{\prime\prime}(\nu)??)\nonumber
%\eea%

\subsubsection{The limiting single deme diffusion process}
\label{system1}  We first consider the special case in which $N_2 =1$ and let $N_1\to\infty$.
\begin{proposition}The limit as $N_1\to\infty$ of the single deme ($N_2=1$)  normalized empirical measure with
diffusion scaling leads (with time speed-up  $t\to N_1t$)) to  a $K$-type Fleming-Viot process
(equivalently, a finite type Wright-Fisher diffusion) which is characterized as the unique solution
of the martingale problem given with generator

\bea{}\label{G.0} G^{0}f(\mathbf{x})&&=\sum_{i=1}^{K}\left( \sum_{j=1}^K
(m_{ji}x_j-m_{ij}x_i)\right)\frac{\partial f(\mathbf{x})}{\partial x_i}\quad{\text{mutation}}\\&&
+s_1\,\sum_{i=1}^{K} x_i\left( V_1(i)- \suml^{K}_{k=1} V_1(k)x_k \right)\frac{\partial
f(\mathbf{x})}{\partial x_i}\quad{
\text{selection}}\nonumber\\&&+\frac{\gamma_1}{2}\sum_{i,j=1}^{K}
x_i(\delta_{ij}x_j-x_j)\frac{\partial^2f(x)}{\partial x_i\partial x_j}\quad{ \text{genetic drift}}
\nonumber\eea defined on the class $D(G^0)$ given by the class of functions $f$ with continuous
second derivatives on the simplex $\Delta_{K-1}=\{(x_1,\dots,x_K),\, x_i\geq 0,\; \sum x_i=1\}$.

\end{proposition}
\begin{proof}  See for example \cite{D93}, Theorem 2.7.1  for the neutral case and
\cite{D93} Theorem 10.2.1 for the proof of uniqueness for the case with selection. (Also see
\cite{EK2}, Chapter 10, Theorem 1.1 for the derivation of the diffusion limit starting with a
discrete generation model.)
\end{proof}

\subsection {Exchangeable system of  Wright-Fisher diffusions}

\noindent We now consider a system of demes labeled by $S=\{1,2,\dots,N_2\}$ where the population
at each deme undergoes mutation and selection as in the single deme process but in addition
individuals can migrate between demes at rate $c$ and the population in a deme can become extinct
at rate $s_2$ and be replaced with population $\mu$ sampled from
 the empirical distribution of deme compositions.  With selection, the replacement deme type is chosen
 with weights  proportional to the  level II fitness   \[0\leq V_2(\mu)\leq 1,\quad \mu\in\mathcal{P}(\mathbb{I}).\]

\bigskip

\subsubsection{Deme level fitness functions}

In order to incorporate fitness at the deme level we must introduce an appropriate class of fitness
functions.  It is natural to assume that the fitness of a deme (subpopulation) is a function of the
distribution of level I types within the deme given by $V_2(\mu)$ when the distribution of types
within the deme is $\mu\in\mathcal{P}(\mathbb{I})$.  We also assume that $V_2$ is a bounded and
continuous function of $\mu$ (in the topology of weak convergence).  Without loss of generality (by
the addition of a constant if needed) we can assume that $V_2(\mu)\geq 0$.

\begin{example} Consider the special case  $\mathbb{I} =\{1,2\}$, and
\be{}  V_2(\mu))=f(\mu(1))\geq 0.\ee Then (see e.g. Lorentz (1963) \cite{L-63} (Chapt. 1, Theorem
4)), we can uniformly  approximate $V_2$ using Bernstein polynomials as follows  \be{}
V_{2}(\mu)=\lim_{n\to\infty}\sum_{k,\ell}a_{n,k}(\mu(1))^k(\mu(2))^{n-k}\ee where the coefficients
$a_{n,k}\geq 0$.

\end{example}

In general, given a compact Polish space $E$ we consider the space $\mathcal{P}(E)$ of probability measures on
$E$ with the topology of weak convergence.  We then consider the Bernstein operators
$B^K:C(\mathcal{P}(E))\to C(\mathcal{P}(E))$ (where $C(\mathcal{P}(E))$ is a normed space with the
supremum norm) defined by \be{} B^Kf(\mu)=\int\dots\int
f\left(\frac{1}{K}\sum_{i=1}^K\delta_{x_i}\right)\mu(dx_1)\dots \mu(dx_K) \ee
 Then by (Dawson and G\"artner \cite{DGa} Theorem 3.9 ) for any $f\in
C(\mathcal{P}(E))$ \be{} B^Kf\to f\quad\text{in } C(\mathcal{P}(E)).\ee

This means that we can approximate any bounded continuous fitness function $V_2\in
C_+(\mathcal{P}(\mathbb{I}))$ by \be{} B^KV_2(\mu)=\int\dots\int
h(x_1,\dots,x_K)\mu(dx_1)\dots\mu(dx_K) \ee where $h$ is a bounded non-negative function on
$(\mathbb{I})^K$. This can be rewritten in the form \be{}\label{E.BA} B^KV_2(\mu)=\sum_i s_{i,K}
\int_{(\mathbb{I})^K} \prod_{j=1}^K 1_{A_{K,i,j}}d\mu^{\otimes K} \ee where for each $i$  the
$A_{K,i,j}$ are subsets of $ \mathbb{I}$. We denote the class of fitness functions of the form
(\ref{E.BA}) by $\mathcal{V}_K$ and note that we can approximate  any bounded continuous fitness
function by a functions in $\mathcal{V}:=\cup_K \mathcal{V}_K$.

%\begin{remark}  In the case $\gamma_2=0,\; s_2=0$ the equation for $\nu(t)$
%is deterministic and nonlinear, namely, the McKean-Vlasov equation.

%If $|\mathbb{I}|>2$, what kind of nonlinear system do we get?

%\end{remark}

\begin{example} Types $\mathbb{I}=\{1,2$\}. If $\mu(1) =p_1,\; \mu(2)=p_2$, then
\be{} V_2(\mu)=p_1p_2,\quad (1- V_2(\mu))=p_1^2+p_2^2+p_1p_2\ee \be{}  V_2(\mu)=\mu^\otimes (C), 1-
V_2(\mu)=\mu^\otimes(C^c)\ee where $C=\{1\}\otimes\{2\}$.

%Let $f(\mu)=\mu^\otimes(A)$ \be{} \to\mu^\otimes((10)(01)\otimes A)+\quad
%\mu^\otimes([((10)(10))+(01)(01) +( (01)(10))]\otimes_2 \mu^\otimes(A)\ee

\end{example}

\begin{example}
Consider the 3 type case $\mathbb{I}=\mathbb\{1,2,3\}$ with fitness functions  as follows:
\[ V_1(1)=s_1,\quad  V_2(\mu)=s_2\mu(2)\mu(3).\]

\end{example}

\begin{example}\label{E4}   Model with  3 types  $\mathbb{I}=\{1,2,3\}$ and mutualistic (state-dependent) fitness.

 \begin{itemize}
 \item  $V_1(1,\mu)= s_1\mu(2)$,  $V_1(2,\mu)=s_1\mu(1)$, $V_1(3)=1/2$
 \item Level II fitness is $V_2(\mu)=s_2[\frac{1}{2}\mu(3)+ 2s_1\mu(1)\mu(2)]$.
 \end{itemize}
 This can be analysed using the set-valued dual as indicated in Remark \ref{R5}.
\end{example}

\begin{example}  $ V_2(\mu)$ is positive iff the population contains a certain set of properties
(from a finite set).

\be{} V_2(\mu)=\sum e_i\mu^\otimes(A_i)\ee \be{} 1- V_2(\mu)=\sum e_i\mu^\otimes(A^c_i)\ee where
$e_i\geq 0,\;\sum e_i=1$, $A_i\subset (\mathbb{I})^\N$.
\end{example}

\subsubsection{The limiting generator as $N_1\to\infty$ and $N_2<\infty$}

 \noindent The generator for the resulting model of $N_2$ interacting demes:
for $F\in C^2(\mathcal{P}(\mathbb{I})^{N2})$, with
$\mathbf{X}:=(\mathbf{x}_1,\dots,\mathbf{x}_{N_2})\in (\mathcal{P}(\mathbb{I}))^{N2}$

\bea{}\label{G.int} &&G^{N_2,\rm{int}} F(\mathbf{X}) \\&& = \; {{\eta}}\suml_{\xi=1}^{N_2}
G^{0}_\xi F(\mathbf{X}) \quad\quad\quad\text{ mutation-selection dynamics at each site}\nonumber
\\&&\; +
 c\cdot \suml_{\xi=1}^{N_2}\left[\sum_{j=1}^{K}\left(\sum_{\xi'=1}^{N_2}\frac{1}{N_2}\,x_j(\xi')
-x_j(\xi)\right)\frac{\partial F(\mathbf{X})}{\partial x_j(\xi)}\right] \quad{\text{migration}}\nonumber\\
&& +s_2\,\sum_{\xi=1}^{N_2}\left(\frac{1}{N_2}\sum_{\xi'=1}^{N_2} V_2(\mathbf{x}(\xi))
F(\Phi_{\xi\xi'}\mathbf{X})-F(\mathbf{X})]\right)\; { \text{deme replacement}}\nonumber\\&&
+\frac{1}{2}\gamma_2\sum_{\xi=1}^{N_2}\sum_{\xi'=1}^{N_2}[F(\Phi_{\xi\xi'}\mathbf{X})-F(\mathbf{X})]\quad
{ \text{deme resampling}}\nonumber\eea where $\Phi_{\xi\xi'}\mathbf{X} =(\mathbf{x}_1,\dots,\mathbf
{x}_\xi,\dots,\mathbf{x}_\xi, \mathbf{x}_{N_2}) $ (corresponding to the replacement of
$\mathbf{x}_\xi'$ by $\mathbf{x}_{{\xi}}$) and $\eta$ is a parameter that depends on the relation
between the natural time scales at the two levels.

\medskip

The  martingale problem with generator $G^{N_2,\rm{int}}$ has a unique solution that defines a
c\`adl\`ag strong Markov process $\{{\mathbf X}^{N_2}_t\}_{t\geq 0}$ with state space
$(\mathcal{P}(\mathbb{I}))^{N_2}$. The proof follows as in the proof of Proposition 2.1 but where
the dual process needed to show that the martingale problem is well posed in given in Subsection
3.1.

\subsection{Empirical measure-valued processes and the  Fleming-Viot limit}

We will next consider the limit as $N_2\to\infty$ in the general case in which we can have $s_2>0$
and/or $\gamma_2
>0$. We assume that the initial state satisfies $(\mu_1(0),\dots,\mu_{N_2}(0))\quad$ {is exchangeable}.

\beL{}\label{exchange} Consider the Markov process
$\mathbf{X}(t)=(\mathbf{x}_1(t),\dots,\mathbf{x}_{N_2}(t))\in (\mathcal{P}(\mathbb{I}))^{N_2}$ with
generator $G^{N_2,\rm{int}}$.  Assume that the probability distribution of $\mathbf{X}(0)$ is
exchangeable (i.e the distribution is invariant under permutations of $\{1,2,\dots,N_2\}$). Then
$(\mathbf{x}_1(t),\dots,\mathbf{x}_{N_2}(t))$ is an exchangeable system of
$\mathcal{P}(\mathbb{I})$-valued diffusions.
\end{lemma}
\begin{proof}  This follows since the migration and level II selection terms in the generator are
invariant under permutation - see \cite{vail} for the general case of exchangeable diffusions.

\end{proof}

{\bigskip}

 \noindent {\em The level II empirical process} is defined by
\medskip

\bea{}\label{nueq} \Xi^{N_2}_t :=\frac{1}{N_2}\sum_{j=1}^{N_2}\delta_{\mu_j}\in
\mathcal{P}(\mathcal{P}(\I)).\eea

\bigskip

\noindent Then by Lemma \ref{exchange}, $\Xi^{N_2}(t)$  is a
$\mathcal{P}(\mathcal{P}(\mathcal{\mathbb{I}}))$-valued Markov process with generator inherited
from the interacting system. To describe this we consider the algebra of functions, $D(G^{N_2})$,
on $\mathcal{P}(\mathcal{P}(\I))$ containing functions of the form

\be{}\label{alg} H(\nu)=\prod_{k=1}^K\left[\int h_k(\mu_k)\nu(d\mu_k)\right] \ee where

\be{}\label{alg2} h_k(\mu)   =\sum_j h_{k,j} \mu^{\otimes}(\prod_i 1_{A_{k,ij}})\qquad\text{that
is, a polynomial on }\mathcal{P}(\mathbb{I}).\ee

\noi We then define the generator in terms of the generator of the interacting system as follows:

\[ G^{N_2}H(\nu):=G^{N_2,\rm{int}} {F}(({\mu}_1,\dots,{\mu}_{N_2}))
\]
where $H\in D(G^{N_2})$ and \be{} \nu=\frac{1}{N_2}\sum_{j=1}^{N_2}\delta_{\mu_j},\ee and
$G^{N_2,\rm{int}}$ is given by (\ref{G.int}).
%\[
%F(\nu)=\prod_{k=1}^K\left[\int h_k(\mu_k)\nu(d\mu_k)\right]
%\]

\begin{theorem}\label{T.1} (\cite{DG-93}, \cite{DG-14}).\\

\noindent Assume that $\gamma_2\geq 0$ and $V_2\in \mathcal{V}$. Then
 \bean{}  {\{\Xi^{N_2}_t\}_{t\in [0,T]}\Rightarrow (\Xi_t)_{t \in [0,T]} \;\text{  as  }N_2 \to \infty}\eean   where  $\Xi_t(dx) \in
C_{\mathcal{P}(\mathcal{P}(\mathbb{I}))}([0,T])$  is  the two level Fleming-Viot process with level
two selection given by the unique solution, $\{P_\nu:\nu\in\mathcal{P}(\mathcal{P}(\mathbb{I}))\}$,
to the well-posed $(G_2,{D}_2)$ martingale problem where {the domain ${D}_2\subset
C(\mathcal{P}(\mathcal{P}(\mathbb{I})))$} consists of the algebra of functions containing functions
of the form (\ref{alg}) and the generator acting on ${D}_2$ is given by

 \bea{}\label{G.2}
\\ G_2 H(\nu)&&=\int_{\mathcal{P}(\mathbb{I})} \eta\, G^0\frac{\delta
H(\nu)}{\delta\nu(\mu)}\nu(d\mu)\nonumber\\&&+  c
\int_{\mathcal{P}(\mathbb{I})}\int_{\mathbb{I}}\left( \frac{\delta}{\delta\mu_1(x)}\frac{\delta
H(\nu)}{\delta\nu(\mu_1)}\left[\int\nu(d\mu_2)\mu_2(dx)-\mu_1(dx)\right]\right)\nu(d\mu_1)\nonumber
\\&& +\frac{\gamma_2}{2}\int_{\mathcal{P}(\mathbb{I})}\int_{\mathcal{P}(\mathbb{I})}\frac{\delta^2
H(\nu)}{\delta(\nu(\mu_1))\delta(\nu(\mu_2))}\Big(\nu(d\mu_1)\delta_{\mu_1}(d\mu_2)-\nu(d\mu_1)\nu(d\mu_2)\Big)\nonumber\\&&
+s_2 \left[ \int_{\mathcal{P}(\mathbb{I})} \frac{\delta H(\nu)}{\delta \nu(\mu_1)}\left[V_{2}(\mu_1) -
\int_{\mathcal{P}(\mathbb{I})} V_2(\mu_2)\nu(d\mu_2)\right]\nu(d\mu_1)\right] ,\nonumber \eea where $G^0$
is given by (\ref{G.0}).

\end{theorem}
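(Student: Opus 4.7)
The plan is to follow the four-step martingale problem program outlined in subsection 2.2, with duality providing the crucial uniqueness input. First I would compute the generator of the empirical process $\Xi^{N_2}_t$ acting on a test function $H\in D_2$ of the product form (\ref{alg}). Starting from the fact that $G^{N_2}H(\nu) = G^{N_2,\mathrm{int}}F(\mu_1,\dots,\mu_{N_2})$ with $\nu=\frac{1}{N_2}\sum\delta_{\mu_j}$ and $F(\mathbf X)=\prod_k h_k(\mu_k)$, I would Taylor-expand the increments $H(\nu+\frac{1}{N_2}(\delta_{\mu_2}-\delta_{\mu_1}))-H(\nu)$ up to second order. The first-order term paired with the rates of order $N_2$ (from the $(N_2-1)\gamma_2/2$ deme resampling prefactor and the sums over $\xi'$ in the migration and level-II selection terms) produces the functional-derivative drift terms of $G_2$, while the leftover $O(1/N_2^2)$ combinations with resampling rates produce the quadratic $\delta^2 H/\delta\nu(\mu_1)\delta\nu(\mu_2)$ term. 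The $\eta G^0_\xi$ contributions aggregate into the first line of $G_2$ after averaging. A uniform error estimate then gives $G^{N_2}H\to G_2H$ uniformly on the compact space $\mathcal{P}(\mathcal{P}(\mathbb I))$ for every $H\in D_2$.

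For tightness of $\{P^{N_2}\}$ on $D_{\mathcal{P}(\mathcal{P}(\mathbb I))}[0,T]$, compact containment is free because $\mathbb I$ is finite, hence $\mathcal{P}(\mathcal{P}(\mathbb I))$ is compact. Since $V_1$ and $V_2\in\mathcal{V}$ are bounded, $\|G^{N_2}H\|_\infty$ is bounded uniformly in $N_2$ for each $H\in D_2$, so the Dynkin martingale $H(\Xi^{N_2}_t)-\int_0^t G^{N_2}H(\Xi^{N_2}_s)\,ds$ has uniformly bounded predictable quadratic variation and increments of order $1/N_2$; the Aldous--Rebolledo criterion gives tightness of the real projections $H(\Xi^{N_2}_\cdot)$, and since $D_2$ is a measure-determining family, Jakubowski's criterion yields tightness of $\{\Xi^{N_2}_\cdot\}$. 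Combined with generator convergence, any limit point satisfies the $(G_2,D_2)$ martingale problem, which proves existence.

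For uniqueness I would invoke the function-valued / set-valued dual $\mathcal G_t$ constructed in Section 3, generalizing the dual of Dawson--Greven \cite{DG-14}. Because every $V_2\in\mathcal V$ is a finite linear combination of polynomial (Bernstein-type) monomials in $\mu$, the level-II selection operator acts on a polynomial $H$ by producing another polynomial of controlled higher degree, and likewise for the level-I operator $G^0$; this closedness is exactly what makes the dual a well-defined pure-jump Markov process on a countable state space of labeled tuples encoding coalescences, mutations, migrations and selection-induced branchings at both levels. The duality identity $E_\nu[F(\Xi_t,\mathcal G_0)] = E_{\mathcal G_0}[F(\nu,\mathcal G_t)]$ for the polynomial pairing $F$ determines all joint moments $\int\prod_k\bigl(\int h_k\,d\mu\bigr)\,\Xi_t(d\mu)$, hence the one-dimensional distributions of $\Xi_t$ since such polynomials are measure-determining on $\mathcal{P}(\mathcal{P}(\mathbb I))$ (Bernstein approximation as in the discussion around (\ref{E.BA})). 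Uniqueness of the martingale problem then follows by the standard result (cf.\ \cite{EK2}, Chapter 4) that coincidence of one-dimensional marginals for all initial conditions suffices.

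The main obstacle is the uniqueness step, and within it the construction and control of the set-valued dual that simultaneously handles the level-I selection branching inside each $\mu$ and the level-II selection, which requires a branching of entire sampled subpopulations and therefore couples indices across the dual tree in a nontrivial way. All other ingredients — generator convergence, tightness on the compact state space, identification of limits, and the polynomial-moment argument — are essentially routine once that dual is in place and is shown to have locally bounded jump rates and finite moments, which is precisely what will be established in Section 3.
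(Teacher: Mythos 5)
Your proposal is correct and follows essentially the same route as the paper: the paper's own proof is a short sketch invoking the standard three-step program (tightness, generator convergence on the algebra $D_2$, then uniqueness), with the uniqueness step deferred to the set-valued dual constructed in Section 3 (Theorem \ref{T.3}). Your write-up simply fills in more of the routine details (Taylor expansion of the empirical-measure increments, compactness of $\mathcal{P}(\mathcal{P}(\mathbb{I}))$, Aldous--Rebolledo) that the paper delegates to the references.
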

\begin{proof} We follow the standard argument which involves three steps: proof of the tightness of
the laws of the processes, proof of convergence of the generators on a sufficiently large class of
functions and finally proof that the martingale problem associated with the limiting generator has
a unique solution.  The first two steps follow in the usual way (e.g. proof of \cite{D93}, Theorem
5.3.1). It then remains to prove the uniqueness - this will be proved in the next section after
introducing the appropriate class of dual processes.

\end{proof}
\bigskip
\begin{remark}
An alternative class of functions, $\mathcal{D}_2$, is the linear span of functions of the form
\be{} H(\nu) = \prod_{k=1}^K\left(\int_{\mathcal{P}(\mathbb{I})} \left(\int_{\mathbb{I}^{n_k}}
h(x_{k,1},\dots,x_{k,n_k})\mu_k^{\otimes n_k}(dx_k)\right)\nu(d\mu_k)\right). \ee

 We also consider the convex set $\wt{\mathcal{D}}_2$ of
$[0,1]$-valued functions which contain functions of the above form with
 $h$ having values in $[0,1]$. Note that this class uniquely determines probability measures on $\mathcal{P}(\mathcal{P}(\mathbb{I}))$.

\end{remark}
\begin{remark}  The multilevel Fleming-Viot process is the analogue of the { multilevel superprocess}
- see e.g. Dawson-Hochberg (1991) \cite{DH-91},  Etheridge (1993) \cite{ET-93}, Wu (1994)
\cite{Wu-94}, Gorostiza-Hochberg-Wakolbinger (1995) \cite{GHW-95}, Dawson-Hochberg-Vinogradov
(1996) \cite{DHV-96}, Dawson and Wu (1996) \cite{DW-96}, Dawson-Gorostiza-Wakolbinger (2004)
\cite{DGW-04}.
\end{remark}

\bigskip

%Consider the one level Fleming-Viot process with type space $\mathbb{I}$ and mutation given by a
%Markov chain on $\mathbb{I}$ with bounded generator $A$

% For functions of the form $F(f,\mu)=\int f d\mu^n$ with  $f\in \mathcal{B}(\mathbb{I}^n)$, we have

%\bea{} &&G^1_1F(f,\mu)= F(A^{(n)}f,\mu)+\gamma_{1}\sum_{1\leq i<j\leq
%n}\left(F(\Theta_{ij}f,\mu)-F(f,\mu)\right)\\&&+s_1\sum_{i=1}^n[F(V_{1,i}\cdot f,\mu)-\int
%V_1(x)\mu(dx)F(f,\mu)].\nonumber \eea

%\[ \int h(\mu)\nu(d\mu)\to\int V_{2}(\mu_1)h(\mu_1)\nu(d\mu_1)+\int\int (1-V_{2}(\mu_1))h( \mu_2)\nu(d\mu_1)\nu(d\mu_2)\]

\begin{remark} In the special case  $s_2=0$ and $\gamma_2=0$ we obtain the mean-field limit.
We consider a tagged colony - by exchangeability this can be colony 1, $\mu_{1}^{N_2}(t)$. Then as
$N_2\rightarrow\infty$ in the limit we obtain  the measure-valued McKean-Vlasov dynamics (cf.
\cite{DG-14}) given by the solution to the  martingale problem with {\em nonlinear generator}

\be{} G_1^\nu F(\mu)=G^0F(\mu)+ c \int_{\mathbb{I}} \frac{\delta
F(\mu)}{\delta\mu(x)}[\int_{\mathcal{P} (\mathbb{I})}\nu(d\mu)(\mu(dx))-\mu(dx)]\ee and the law of
the process, $\Xi_t= \mathcal{L}(\mu_t)\in\mathcal{P}(\mathcal{P}(\mathbb{I}))$ is the weak
solution of a nonlinear second order partial differential equation.

If we assume $\gamma_2=0$ but $s_2>0$, then $\Xi_t$ is still deterministic and is the solution of a
nonlinear second order partial differential equation which is a generalization of Kimura's equation
(\ref{Kim}). Depending on the functions $V_1,V_2$ and with recombination these nonlinear equations can
 exhibit a range of behaviors including multiple equilibria and possible periodic or chaotic
 behaviour (see Akin \cite{A-83a}). In the general case with $\gamma_2 >0$ we obtain a two level Fleming-Viot process.

\end{remark}

\section{Duality for interacting and two-level Fleming-Viot systems}\label{duality}

In this section we introduce a basic tool, namely the generalization of the class of set-valued
dual introduced in \cite{DG-14} to the class of two level probability-measure-valued processes
$\Xi(t)\in \mathcal{P}(\mathcal{P}(\mathbb{I}))$ which were obtained in the previous section. These
processes satisfy the martingale problem with generator $G_2$.

 \bea{}\label{MP2} M_H(t):=&& H(\Xi_t)-\int_0^t G_2H(\Xi_s)ds \\&&\text{ is a } P-\text{
martingale} \text{  for all }H\in D_2(G_2).\nonumber\eea

\noindent The dual process developed here will be used to prove that there is a unique law \\ $P\in
\mathcal{P}(C_{\mathcal{P}(\mathcal{P}(\mathbb{I}))}([0,\infty)))$ which satisfies the martingale
problem (\ref{MP2}).  As explained above  the idea is to find a dual process $\mathcal{G}^2_t$ and
to establish the duality relation

\be{}\label{duality21}
E_{\Xi(0)}(F(\Xi(t),\mathcal{G}^2_0))=E_{\mathcal{G}^2_0}(F(\Xi(0)),\mathcal{G}^2_t)).\ee

\medskip

We begin by obtaining the dual for the system of interacting $\mathcal{P}(\mathbb{I})$-valued
processes with generator $G^{N_2,int}$ given by (\ref{G.int}).  For detailed background on the
duality method  to be used refer to \cite{DG-14} Chapter 5.

\medskip
In

\subsection{A function-valued dual}

We now introduce a function-valued dual for the process with  generator $G^{N_2,int}$.
 The state space for the function-valued dual is the set of functions, $\mathbb{H}$, of the form\\ $\sum_k
\prod_{i=1}^{N_2}\prod_{j=1}^{n_i} h_{k,i,j}(x_{ij})$. By inspection of the action of the generator
$G^{N_2,int}$ on functions in $\mathbb{H}$, we can read off the corresponding function-valued
transitions corresponding to mutation, selection and migration  as follows.

 \begin{itemize}
\item
Level I Selection with $V_1(x)=1_B(x)$.

\noindent Transitions at rate $s_1$

\be{} h(x_1,\dots,x_n)\to 1_B(x_i)h(x_1,\dots,x_n)+1_{B^c}(x{n+1})h(x_1,\dots,x_n)\ee

\item Mutation

\be{} h(\dots,x_i,\dots)\to \int h(\dots,y,\dots)M(x_i,dy)\ee

\item  Level I Coalescence: At rate $\frac{\gamma n(n-1)}{2}$,

 \be{} h(x_1,\dots,x_n)\to h(x_i,\dots,x_i,\dots, x_{j-1},x_i,x_{j+1},\dots,x_n)\ee

\item  Migration:  For each $i,j\in S$, at rate $\frac{c}{N_2}$ ,  \be{}\label{mig} h_1(x_{i1},x_{i2})h_2(x_{j1},x_{j2})\to
h_1(x_{i1},x_{j3})h_2(x_{j1},x_{j2}) \ee Here the first index indicates the deme and the second the
rank at the given deme.

\item  Level II selection: By (\ref{E.BA}) and taking convex combinations  it suffices to consider a level II fitness function of the form:
\[  V_2(\mu)= \mu(B)\]

\bea{}&&   h(\cdot)\longrightarrow   V_2(\cdot) h(\cdot) +(1- V_2(\cdot))\otimes  h(\cdot) \eea

\noi Then the level II selection: for each $i,j\in S$ in transitions
 at rate $\frac{s_2}{N_2}$

\[ h(x_{i1},x_{i2})\to 1_B(x_{i1})h(x_{i2},x_{i3})+(1-1_B(x_{j1}))h(x_{i1},x_{i2})\]

\item Level II coalescence: for each pair $i,j$  at rate $\frac{\gamma_2}{2}$ \be{}
h_1(x_{i1},x_{i2})h_2(x_{j1},x_{j2})\to h_1(x_{i1},x_{i2})h_2(x_{i3},x_{i4}).\ee

\end{itemize}

\subsection{A set-valued dual for exchangeably interacting systems of  Fleming-Viot
processes}\label{sss.sv}
\medskip

We now introduce the set-valued dual which will be used to study the interacting system of
Fleming-Viot processes and then the limiting two-level Fleming-Viot process. This is based on  the
set-valued dual introduced in \cite{DG-14} (subsections 9.4, 9.5) for the system of exchangeably
interacting Fleming-Viot processes but extended in order to include level II selection and
resampling.
\medskip

\noi We begin with the population at a set of demes labeled by $S$ with \be{} S=\{1,\dots,N_2\}\ee
with migration between demes as defined in subsubsection \ref{system1} with the assumption of
exchangeability.  Recall that the state space for the finite system of interacting Fleming-Viot
processes is $(\mathcal{P}(\mathbb{I}))^{S}$.The set-valued dual is a refinement of the
function-valued dual sketched above. Noting that it suffices to work with linear combinations of
indicator function the Level I function-valued and set-valued version of the above dual were
introduced and studied in depth in Dawson and Greven \cite{DG-14}.

We now introduce the state space and notation needed to define  the set-valued dual
$\mathcal{G}_t$.

 \noi Recall that $\mathbb{I}=\{1,\dots,K\}$. We indicate the indicator function of a subset
$A\subset \mathbb{I}$ by $1_A=(e_1,\dots,e_K)$ with $e_i=1$ if $i\in A$ and $e_i=0$ is $i\in A^c$,
that is, the complement of $A$. For example, the indicator function of $\{1,2\}\subset\{1,2,3\}$ is
indicated by $(110)$. We sometimes identify finite subsets with their indicator functions.

Let
\begin{eqnarray}&&\mathcal{I}:= \text{ algebra of subsets of }\mathbb{I}^{\N}\nonumber
\\&&\qquad\text{  of the form } A\otimes_1 \mathbb{I}^{\N},\; A\text{ is a subset of  }\mathbb{I}^m,m\in\N,\nonumber\end{eqnarray}
the coordinates in a product in $\mathbb{I}^m$ are called {\em ranks}. Given $A,B\subset
\mathbb{I}$ we denote the product of these sets in $\mathbb{I}\times\mathbb{I}$ as $A\otimes_1 B$.
Given $A,B\subset \mathcal{I}$ we denote the product of these sets in
$\mathcal{I}\times\mathcal{I}$ as $A\otimes_2 B$.
\bigskip

\noi The {\em state space:} $\sf{I}^{N_2}$ for the set-valued dual associated to the interacting
systems of Fleming-Viot processes with $S= \{1,\dots,N_2\}$ is the algebra of sets containing sets
of the form

 \bea{}\label{fnc1} && \bigotimes_{2,\,i\in S} \left(\otimes_{1\,,j=1}^{n_i} A_{i,j}\right),\quad
A_{i,j}\in\mathbb{I},\; n_i\in\N,\\&&\in (\mathcal{I})^{\otimes_2 S}.\nonumber\eea

In order to describe the dual dynamics we first describe the transitions that occur for a set
written as a disjoint union  of  sets of the form (\ref{fnc1})
%\bea{} && \mathcal{G}_\alpha= \prod_{i\in S} \left(\prod_{j=1}^{n_i} A_{i,j}\right),\eea
where in $A_{i,j}\subset \mathbb{I}$ the first subscript denotes the deme and the second subscript
denotes the rank at the deme,  and $V_1(x) = 1_{B}(x)$ with rate $s_1>0$.
\bigskip

 \noi The  transitions of the set-valued process $\mathcal{G}^{N_2,int}_t$ are  obtained by restricting the function-valued
 transitions to indicator functions of sets in $\sf{I}^{N_2}$.  These are then given by:
 \smallskip

\noindent Level I selection at rank $j^*$ at deme $i^*\in S$ at rate $s$:  \be{}\label{sel1}
A_{i^*.j^*} \to B_{i^*.j^*}\cap A_{i^*.j^*}\cup B_{i^*.j^*}^c\otimes_1 A_{i^*.j^*+1}\ee and all
other ranks larger than $j^*$ are also shifted to the right at deme $i^*$.

\bigskip

\noindent Mutation at rank $j$ at deme $i$: (refer to \cite{DG-14}, Definition 5.12 and
Subsubsection 9.5.3) \be{}A_{ij}\subset\mathbb{I}\to A_{ij}\cup\{\ell\}\text{  with } \ell\in
\mathbb{I}\text{ at rate }\sum_{k\in A}m_{\ell,k},\ee or \be{}A_{ij}\to A_{ij}\backslash
\{\ell\}\text{  at rate }\sum_{k\in A_{ij}^c}m_{\ell,k}.\ee
\bigskip

\noindent Coalescence at rate $\gamma_1/2$ of ranks $j_1$ and $j_2>j_1$ at deme $i\in S$:
$A_{i,j_1}\otimes_1 A_{i,j_2}\to \wt A_{i,j_1}= A_{i,j_1}\cap A_{i,j_2}$ and $\wt A_{i,j}=
A_{i,j+1}$ for $j\geq j_2$.

\bigskip

\noindent Migration at rate $\frac{c}{N_2}$ of rank $j$ from deme $i_2\in S$ to $i_1\in S$. Let
$A_i=\otimes_{1,i=1}^{n_i}A_{ij}$.

 \be{}A_{i_1}\otimes_2
A_{i_2}\to \wt A_{i_1}\otimes_2 \wt A_{i_2}\ee with \be{} \wt A_{i_1,n_1+1}= A_{i_2,j}\ee \be{}\wt
A_{i_2,\ell}=A_{i_2,\ell+1}\quad\text{for }\ell\geq j\ee
 \be{}\wt
A_{i_2,\ell}=A_{i_2,\ell}\quad\text{for }\ell < j\ee

\begin{remark}

 Note that in the limit $N_2\to\infty$ the measure $\nu$ is nonatomic and migration or level II
selection transitions always lead to a new (that is unoccupied)  deme.
\end{remark}
\bigskip

\medskip

\noi {\em Coupling:} Note that every set in $\sf{I}^{N_2}$  can be written as the union of a finite
number of disjoint sets of the form $\otimes_{2,i=1}^N\otimes_{1,j=1}^{n_i}A_{i,j}$ with
$A_{i,j}\subset \mathbb{I}$ and $N\in\N$. Finally the above transitions are simultaneously carried
out in this disjoint union of products and are coupled as follows: all selection, mutation,
coalescence and migration operations are simultaneously applied to each rank at each deme of each
product in the disjoint union.  Each such transition preserves the decomposition of the disjoint
union into a new disjoint union - this is obviously satisfied for mutation, coalescence and
migration and true for selection in view of the specific form (\ref{sel1}).
\bigskip

\subsubsection{The dual representation}

%Interacting Wright-Fisher system $\Xi^N(t)= \frac{1}{N}\sum_{\xi=1}^N[\mathbf{x}_\xi(t)], \;
%\mathbf{x}_\xi(t)\in\mathcal{P}(\mathbb{I})$, where $\mathbb{I}=\{1,\dots,K\}$.

We now state the duality relation between the system of interacting Fleming-Viot processes
$\mathbf{X}$ under the assumption

 \be{} \mathbf{X}(0)=\mu^{\otimes_{2}N_2}.\; \text{  with }
\mu\in\mathcal{P}(\mathbb{I})\ee which implies that we have a system of exchangeably interacting
Fleming-Viot processes.
\medskip

 Define the function $F:\mathcal{P}(\mathbb{I})\otimes \sf{I}\to [0,1]$ by
 \bea{} F(\mathbf{X},\mathcal{G})=  \mathbf{X}^*(\mathcal{G})\eea
 where if $\mathbf{X}(0)=\otimes_{2,j=1}^{N_2} \mu_j$, then { $\mathbf{X}^*(0)
 =\otimes_{2,j=1}^{N_2}(\mu_j)^{\otimes_1\N}\in \mathcal{P}((\mathbb{I}^{\mathbb{N}})^{N_2})$}.  For
 example,
 if $G=\bigotimes_{i\in S} G_i$ with $G_i\in\mathcal{I}$, then
 \be{} \mathbf{X}^*(G)=\prod_{j\in S} \mu_j^{\otimes_1 \N}(G_j).\ee

\bigskip

\begin{theorem}\label{T.2} Let $\mathbf{X}^{N_2}$ denote a solution to the martingale problem with
generator $G^{N_2,int}$. Then  \noi (a) \noindent{Dual Representation}{
 \bea{}\label{dr1} E_{\mathbf{X}(0)}(F(\mathbf{X}^{N_2}_t,\mathcal{G}^{N_2,int}_0))=E_{\mathcal{G}^{N_2,int}_0}(F(\mathbf{X}^{N_2}_0,\mathcal{G}^{N_2,int}_t))\eea}

%\textbf{Example} $\mathbb{I}=\{1,2\}$; $a,b\in\{1,\dots,N\}$
%\[ E_{X(0)}((x_a(t,1))^{k_1}\cdot (x_b(t,1))^{k_2})=E_{\mathcal{G}_0}[F(X(0),\mathcal{G}_t)],\quad \mathcal{G}_0=\{1\}_a^{\otimes k_1}\times\{1\}_b^{\otimes k_2}\]
\noi (b) The representation (\ref{dr1}) uniquely determines the marginal distribution of the
process $\mathbf{X}^{N_2}(t)$ and therefore establishes the uniqueness of the solution to the
martingale problem.
\end{theorem}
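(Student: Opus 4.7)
The plan is to follow the infinitesimal (generator-matching) approach to duality, extending the derivations of \cite{DG-14} (Sections 5 and 9.5) to accommodate the new level II selection and resampling mechanisms.

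First I would verify the domain conditions. For each fixed $\mathcal{G}\in \sf{I}^{N_2}$ written as a disjoint union of products $\bigotimes_{2,i\in S}\bigotimes_{1,j=1}^{n_i} A_{i,j}$, the map $\mathbf{X}\mapsto F(\mathbf{X},\mathcal{G}) = \mathbf{X}^*(\mathcal{G})$ is a finite linear combination of monomials $\prod_{i,j}\mu_i(A_{i,j})$; in particular it lies in $D(G^{N_2,\mathrm{int}})$. Symmetrically, for fixed $\mathbf{X}$ of product form, $\mathcal{G}\mapsto F(\mathbf{X},\mathcal{G})$ is a bounded function on the countable state space of the pure-jump process $\mathcal{G}^{N_2,\mathrm{int}}_t$, whose generator is well-defined from the transition rates listed in Subsection~3.2.

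The heart of the argument is the pointwise generator identity
\[
\bigl(G^{N_2,\mathrm{int}}\bigr)_{\mathbf{X}} F(\mathbf{X},\mathcal{G})
= \bigl(G^{N_2,\mathrm{int},*}\bigr)_{\mathcal{G}} F(\mathbf{X},\mathcal{G}),
\]
to be checked mechanism by mechanism on a generic product $\prod_{i,j}\mu_i(A_{i,j})$. For mutation, level I coalescence, and migration this is exactly the verification carried out in \cite{DG-14}, Subsection 9.5, since those parts of $G^{N_2,\mathrm{int}}$ coincide with the single-level generator applied deme by deme. For level I selection one uses $V_1=1_B$ and the Fleming-Viot identity $x_i(V_1(i)-\langle V_1,x\rangle)\partial_i \leftrightarrow 1_B\cdot(\cdot)+1_{B^c}\otimes(\cdot)$, yielding exactly the split (\ref{sel1}). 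The genuinely new computations are the two level II mechanisms: for level II selection, applying $G^{N_2,\mathrm{int}}$ to a product across demes produces (after reduction to $V_2(\mu)=\mu(B)$ via (\ref{E.BA})) a term $s_2\!\int[V_2(\mu_2)-V_2(\mu_1)]\nu(d\mu_1)\nu(d\mu_2)$, which matches the split $h\mapsto V_2\,h+(1-V_2)\otimes h$ applied rank-wise at the chosen deme; for level II coalescence, the differentiation in $\nu$ collapses $h_1(\mu_i)h_2(\mu_j)$ into $h_1h_2$ on a single deme, which is precisely the rank-merging on the dual side. I would also check that the `coupling' stipulation (simultaneous application of each transition to every product in the disjoint decomposition) preserves the disjoint decomposition under each of these six operations; this is immediate for mutation, coalescence, and migration, and follows for level I and level II selection from the fact that $(B\cap A)\sqcup(B^c\otimes_1 A')$ is again a disjoint union.

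The main obstacle will be bookkeeping the level II selection transition: one must track carefully how the split acts on the tensor $\bigotimes_2$ structure of demes versus the tensor $\bigotimes_1$ structure of ranks within a deme, and verify that the resulting rate agrees with the $s_2/N_2$ normalization in (\ref{G.int}). Once the generator identity is established, a standard duality lemma such as \cite{EK2}, Theorem 4.4.11 (or the formulation in \cite{DG-14}, Lemma 5.4) applied to the bounded function $F$ yields the duality relation (\ref{dr1}), proving part (a).

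For part (b), observe that the family $\{F(\,\cdot\,,\mathcal{G}):\mathcal{G}\in\sf{I}^{N_2}\}$ is a multiplicatively closed subalgebra of $C((\mathcal{P}(\mathbb{I}))^{N_2})$ containing all products $\prod_{i,j}\mu_i(A_{i,j})$, and these polynomials separate points and are therefore measure-determining on $\mathcal{P}((\mathcal{P}(\mathbb{I}))^{N_2})$ by Stone--Weierstrass. Hence (\ref{dr1}) uniquely determines the one-dimensional marginal law of $\mathbf{X}^{N_2}_t$ from the initial law and the (explicitly constructed) law of the jump process $\mathcal{G}^{N_2,\mathrm{int}}_t$. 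Since any solution of the martingale problem with generator $G^{N_2,\mathrm{int}}$ is Markov, uniqueness of one-dimensional distributions propagates to uniqueness of all finite-dimensional distributions in the standard way, completing the proof of part (b).
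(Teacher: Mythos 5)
Your proposal is correct and follows essentially the same route as the paper: the paper's proof also rests on the pointwise generator identity $G^{N_2,int}F = G^{dual}F$, citing \cite{DG-14} for the $s_2=0$ mechanisms and deferring the level II selection/coalescence verification to the proof of Theorem \ref{T.3}, with uniqueness then following from the measure-determining property of the dual functions exactly as you argue. You simply carry out in place the level II bookkeeping that the paper postpones.
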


\begin{proof}  The proof in the case $s_2=0$ is given in detail in \cite{DG-14} based on verifying that the generators
of the two processes acting on the function $F$ satisfy the relation \be{}
G^{N_2,int}F(\mu,A)=G^{dual}F(\mu,A)\quad \text{  for all  } \mu\in \mathcal{P}(\mathbb{I}),\; A\in
\sf{I},\ee where $G^{dual}$ is the generator of the set-valued Markov jump process with transition
rates given above. The extension to the case $s_2>0$ follows in the same way and will be given in
more detail for the two-level process below.
\end{proof}

\bigskip
\begin{remark}\label{R5}  \textbf{State-dependent fitness}

We can also consider level I selection that is state dependent, that is in which the fitness  of a
type depends on the distribution of types (e.g. diploid). For example we could have the fitness of
type $1$ proportional to the  population proportion of type $B$, that is, $V(1,\mu)= s\mu(B),\;
B\subset \mathbb{I},\; s\geq 0$. In this case the dual has function-valued transitions at rate $s$:
\bean{}
&& f\to 1_B\otimes_1 [1_1f-1_1\otimes_1 f]+f\\
&& =1_B\otimes_1[1_1f +(1-1_1)\otimes_1 f]\\
&& \quad + (1-1_B)\otimes_1 f. \eean

A second example to a set-valued dual which can be used to analyse such systems, eg. mutualistic
types (see Example \ref{E4}).  Consider $\mathbb{I}=\{1,2,3\}$. \[ V_1(1)=v_1,\quad V_1(2,\mu)=
v_{M}\cdot \mu(3), \;V_1(3,\mu)=v_{M}\cdot \mu(2),\] with transitions \begin{eqnarray}\label{sdf}&&
f \to 1_1f+(1_{2}+1_3)\otimes_1 f\quad \text{ at rate }v_1,\\&&
 f\to 1_3\otimes_1[1_2f+(1_{1}+1_3)\otimes_1 f]+(1_1+1_2)\otimes_1 f \quad \text{ at rate }v_M,\nonumber\\&&
f\to 1_2\otimes_1[1_3f+(1_{1}+1_2)\otimes_1 f]+(1_1+1_3)\otimes_1 f \quad \text{ at rate
}v_M.\nonumber\end{eqnarray}

\end{remark}

\subsection{A set-valued dual for the two level Fleming-Viot process}

The objective of  this subsection is to extend the set-valued dual of subsection \ref{sss.sv} in
order to construct set-valued duals $\mathcal{G}^{2,N_2}_t$, $\mathcal{G}^2_t$  for the
$\mathcal{P}(\mathcal{P}(\mathbb{I}))$- valued processes $\{\Xi^{N_2}(t)\}_{t\geq 0}$ (assuming
exchangeable initial configuration) and $\{\Xi(t)\}_{t\geq 0}$.  We assume that the level II
selection  rate is $s_2$ and with fitness function $V_2$ and the level II resampling  rate is
$\frac{\gamma_2}{2}$.  To simplify the notation we take $\eta=1$ in (\ref{G.int})  in the
subsequent discussion.
\bigskip

\noi The {\em state space:} $\sf{I}^{2*}$ for the set-valued dual $\mathcal{G}^2_t$ is  is the
algebra of subsets of $(\mathbb{I}^{\N})^{\N}$ containing sets of the form

 \bea{}\label{fnc12} && \bigotimes_{2,\,i\leq m} \left(\otimes_{1\,,j=1}^{n_i} A_{i,j}\right),\quad
A_{i,j}\in\mathbb{I},\; n_i\in\N,\\&&\in (\mathcal{I})^{\otimes_2 m},\quad m\in \N,\nonumber\eea
where $i$ is the index of the deme and $j$ is the index of the rank within the deme.
 The transitions of the dual due
 to level I mutation, resampling, and selection at each deme and migration between demes are given as
above in subsection \ref{sss.sv}. In the $N_2\to\infty$ limit, migrants always move to a new
(unoccupied) deme, namely the the unoccupied deme of lowest index.

\medskip

\noi  We assume that $V_2$ belongs to the class of level II fitness functions of the form
 \be{}\label{v2rep} V_2(\mu)= \sum_j s_{2,j} V_{2,j}(\mu),\;\;
 V_{2,j}(\mu)= \mu^{\otimes}( 1_{B_j})\ee and $B_j=\prod_{i=1}^{n_j}B_{ji}$ with $B_{ji}\subset
\mathbb{I}$.
\bigskip

\noi  We
now introduce the additional transitions that occur due to Level II selection and coalescence. For
the former, using the linearity
 it suffices to describe the contribution to the dynamics of $ V_{2,j}$ with $
V_{2,j}(\mu)=\mu^\otimes(B)$, $B\in\mathcal{I}$.
\bigskip

\noindent  As above  we use the notation $\otimes_1$ and $\otimes_2$ to distinguish such products
on $\mathbb{I}$ and $\mathcal{I}$. Similarly for measures in
$\nu_i\in\mathcal{P}(\mathcal{P}(\mathbb{I}))$  we write  $\nu_1\otimes_2\nu_2$ for the product
measure.

\bigskip

\noindent \underline{Set-valued transitions - deme level selection}
\smallskip

 Using the linearity
 it suffices to consider the contribution to the dynamics of $ V_{2,j}$ with $
V_{2,j}(\mu)=\mu^\otimes(B_j)$, $B_j\in(\mathbb{I})^{n_j}$.  Given such a fitness function and sets
$A_i\in\mathcal{I}$ with the subscript indicating the deme, then for every $k,\ell\in S$ the action
of selection on deme $k$  results in the  transition

\be{} \bigotimes_{2, i\in S} A_i\to \left(B_k\otimes_1 A_k \otimes_2 \bigotimes_{2,\,i\ne
k}A_i\right) \bigcup \left(B^c_\ell\otimes_2  \bigotimes_{2,\, i\in S} A_i\right)\ee occurs with
rate $\frac{s_{2,j}}{N_2}$ if $S=\{1,\dots,N_2\}$.
\medskip

If $S=\mathbb{N}$, then this becomes

\be{}\label{22sel} \bigotimes_{2,\, i=1}^{n} A_i\to \left(B_k\otimes_1 A_k \otimes_2
\bigotimes_{2,\,i\ne k}A_i\right) \bigcup \left(B^c_\ell\otimes_2  \bigotimes_{2,\,i=1}^{n}
A_i\right)\ee where $A_i,B_i\in\mathcal{I}$, $n$ is the number of occupied demes (i.e. not
identically $\mathbb{I}^\N$)  and $\ell$ denotes the first unoccupied deme, occurs with rate
$s_{2,j}$.
\bigskip

\noi Note that exchangeability is preserved by the dynamics and we can again {\em couple} the
corresponding indices after a level II selection event, that is all further transitions are applied
simultaneously to the corresponding indices (deme and rank at the deme).  For this reason we can
rewrite (\ref{22sel}) as

\be{}\label{l2selx} \bigotimes_{2,\, i=1}^{n} A_i\to \left(B_k\otimes_1 A_k \otimes_2
\bigotimes_{2,\, i\ne k}A_i\right) \bigcup \left(B^c_k\otimes_2  \bigotimes_{i} \wt A_i\right)\ee
where $\wt A_i =A_i\text{ if }i<k$, $\wt A_i =A_{i+1}\text{ if }i\geq k$.  This means that the new
event is a disjoint union of events and this is preserved by all further transitions.
\bigskip

\noindent Example: Let $\mathbb{I}=\{1,2\}$ with fitness $V_2(\mu)=\mu(1)$. Then given
$1_{\mathcal{G}}= 1_1$, then the first transition (in terms of indicator functions)
\[  1_1\to 1_1\otimes_1 1_1+ 1_1\otimes_2 1_2\]
so that letting
\[ E_\nu(\mu(1))=\int \mu(1)\nu(d\mu),\] we have

\[ E_\nu(\mu(1))\to  E_\nu(\mu(1)) +E_\nu((\mu(1))^2)-(E_\nu(\mu(1)))^2= E_\nu(\mu(1))+\text{Var}_{\nu}(\mu(1))\]
so that if $\nu$ is not a single atom probability measure the level II selection is effective.
\bigskip

\noi \underline{Set-valued transitions - deme level coalescence}

The level II resampling results in the coalescence of two demes, for example demes $1,2$ as
follows:

\bean{}&& \left(\bigotimes_{1,j} B_j\right)_1\otimes_1 \left(\bigotimes_{1,i} A_i\right)_1 +
\left(\bigotimes_{1,j}
B_j\right)_1^c\otimes_2 \left(\bigotimes_{1,i} A_i\right)_2\\
&&\to \left(\bigotimes_{1,j} B_j\right)_1 \otimes_1 \left(\bigotimes_{1,i} A_i\right)_1+
\left(\bigotimes_{1,j} B_j\right)_1^c\otimes_1 \left(\bigotimes_{1,i} A_i\right)_1\\&&=
\left(\bigotimes_{1,i} A_i\right)_1\eean where the exterior subscripts denote the deme.

When there is no level II coalescence and $\nu_0$ is non-random, the  $\{t\to \nu(t)\}$ is
deterministic and it suffices to consider $k_0=1$.  The reason is that there is no interaction
between the supports of the associated set-valued processes starting with disjoint supports in this
case so that $Var(\int h(\mu)\nu_t(d\mu))=0$.

\bigskip

\subsubsection{The duality relation for the  two level Fleming-Viot process and its applications}

We now present the dual representation for the two level  Fleming-Viot systems.
\medskip

\noi Let  $\{\Xi(t)\}_{t\geq 0}$ denote a $\mathcal{P}(\mathcal{P}(\mathbb{I}))$-valued process
with probability law \[P^{\Xi_0}=\mathcal{L}(\Xi)\in
\mathcal{P}(C_{\mathcal{P}(\mathcal{P}(\mathbb{I})))}([0,\infty)))\] which satisfies the
$(G_2,{{D}}_2)$-martingale problem. Then the time-marginals $\Xi(t)$ are random probability measure
on $\mathcal{P}(\mathbb{I})$. Then by de Finetti's theorem (see \cite{D93}, Theorem 11.2.1) there
exist a sequence $\{\wt\mu_n\}$ of $\mathcal{P}(\mathbb{I})$-valued exchangeable random variables
such that $(\wt\mu_1,\dots,\wt\mu_n)$ has joint distribution \be{}
P^{(n)}(t,d\wt\mu_1,\dots,d\wt\mu_n)=\int_{\mathcal{C}_{\mathcal{P}(\mathcal{P}(\mathbb{I}))}[0,\infty)}\Xi(t,d\mu_1)\dots\Xi(t,d\mu_n)\,dP^{\Xi_0},\quad
n\in\N,\ee that correspond to the moment measures of $\Xi(t)$.

\medskip

\noi Let $\mathcal{G}_t$ with values in $\sf{I}^{2*}$ denote the set-valued process defined above.
\medskip

 \noi Define the function $ \mathcal{H}:(\mathcal{P}(\mathcal{P}(\mathbb{I})))\times (\mathcal{I})^{\otimes \N,*}\to [0,1]$ by
 \bea{} &&\mathcal{H}(\nu,\mathcal{G})= \sum_{k=1}^{N_1}\prod_{i=1}^{N_{2,k}}\int \left[\mu_i^{\otimes
n_{ki}}(A_{k,i})\right]\nu(d\mu_i)\;\;\text{  if  }
 \mathcal{G}=\bigcup_{k=1}^{N_1}\bigotimes_{2,i=1}^{N_{2,k}} A_{k,i},\text{  with  } A_{k,i}\subset (\mathbb{I})^{n_{ki}},\nonumber\\&&
 \text{with  } \bigotimes_{2,i=1}^{N_{2,k_1}} A_{k_1,i}\cap \bigotimes_{2,i=1}^{N_{2,k_2}} A_{k_2,i}=\varnothing\text{  if  }k_1\ne k_2,\nonumber \\&&
 \text{we can also write this as }\mathcal{H}(\nu,\mathcal{G})=\nu^*(\mathcal{G}).\nonumber\eea

 \beT{}\label{T.3}{Dual Representation}

(a)  For any solution $\{P_{\Xi_0}:\Xi_0\in\mathcal{P}(\mathcal{P}(\mathbb{I}))\}$ of the
$(G^{N_2},{{D}}_2)$ or $(G_2,{{D}}_2)$-martingale problem
 \bea{}\label{drep2} E_{\Xi_0}(\mathcal{H}(\Xi_t,\mathcal{G}^2_0))=E_{\mathcal{G}_0}(\mathcal{H}(\Xi_0,\mathcal{G}^2_t))\eea

 (b) The $(G^{N_2},{{D}}_2)$ and $(G_2,{{D}}_2)$-martingale problems are
 well-posed.
\end{theorem}

\begin{proof}  The proof for the cases $(G^{N_2},{{D}}_2)$ and $(G_2,D_2)$ follow the same lines.  We now give the details
the the latter case.\\    (a)  As above, we begin by identifying the terms in $G_2H(\nu)$  for
functions
 in ${D}_2$ of the form: \be{}H(\nu)=\prod_{k=1}^{k_0}\left(\int_{\mathcal{P}(\mathbb{I})}
\left[\int_{\mathbb{I}^{n_k}} h(x_{k,1},\dots,x_{k,n_{k}})d\mu_{k}^{\otimes
n_{k}}\right]\nu(d\mu_{k})\right),\ee in other words we work with functions of the form
\[ \prod_{k=1}^{k_0}h(x_{k,1},\dots,x_{k,n_k}).\]

\medskip
The transitions  for functions of this form are given as follows:

\begin{itemize}
\item Level I resampling. This results in the coalescence \be{}\int \left[\int\int
h(x_{11},x_{12})\mu(dx_{11})\mu(dx_{12})\right]\nu(d\mu)\to \int\left[\int
h(x_{11},x_{11})\mu(dx_{11})\right]\nu(d\mu),\quad \text{at rate }\gamma_1\ee
\item Migration. At rate $c$
\bea{}&& \int \left[\int h(x_{11},x_{12})\mu(dx_{11})\mu(dx_{12})\right]\nu(d\mu)\\&& \to
\int\int\left[ \int
h(x_{11},x_{22})\mu_1(dx_{11})\mu_2(dx_{22})\right]\nu(d\mu_1)\nu(d\mu_2).\nonumber\eea

\item Selection at level I with

\be{} V_1(x)= s_1 1_{B}(x).\ee This results in the transition with

\bea{}&&\int \int h(x_{11})\mu(dx_{11})\nu(d\mu)\\&&\to \int\left[\int
h(x_{11})1_B(x_{11})\mu(dx_{11})+\int\int
1_{B^c}(x_{11})h(x_{12})\mu(dx_{11})\mu(dx_{12})\right]\nu(d\mu)\nonumber \eea
 at rate $s_1$.
\end{itemize}

\medskip

\noi Under {\em level II coalescence} two occupied demes $i,j \in S,\,i\ne j$ are chosen at random
at rate $\gamma_2/2$ and we have \bea{} &&\\&&\int\left[ \int\int
h(x_{i1},x_{i2})h(x_{j1},x_{j2})\mu_i(dx_{i1})\mu_i(dx_{i2})\mu_j(dx_{j1})\mu_j(dx_{j2})\right]\nu(du_i)\nu(d\mu_j)\nonumber\\&&\to
\int\left[ \int \int\int\int
h(x_{i1},x_{i2})h(x_{i3},x_{i4})\mu(dx_{i1})\mu(dx_{12})\mu(dx_{i3})\mu(dx_{i4})\right]\nu(d\mu),\nonumber\eea
\[ h\otimes_2h\to h\otimes h.\]

In particular we have
\[ \int\left(\int h(x)\mu(dx)\right)\nu(d\mu)\cdot \int\left(\int h(x)\mu(dx)\right)\nu(d\mu)\to
\int \left(\int h(x)\mu(dx)\right)^2\nu(d\mu).\]

\medskip

\noi {\em Level II selection} with fitness function $ V_{2}$

Now consider the  case in which $h(.)$ and $ V_2(\cdot)$ are polynomials, that is,

\be{}\label{hdef}  h(\mu)   =\sum_j  h_j \mu^{\otimes}(\otimes_{1,i}{A_{ji}})\qquad\text{polynomial
on }\mathcal{P}(\mathbb{I}),\; h_j\leq 1\ee where \be{}\label{v2rep2} V_2(\mu)= \sum_j a_j
V_{2,j},\;\; V_{2,j}= \mu^{\otimes}(\otimes_{1,i}{B_{ji}})\ee

\be{}\label{hsela}  h(\mu)\to V_{2,j}(\mu_1) h(\mu_1)+(1- V_{2,j}(\mu_1)) h( \mu_2)\quad\text{at
rate } a_j.\ee

\be{} \label{hselb}\int h(\mu)\nu(d\mu)\to\int V_{2,j}(\mu_1)h(\mu_1)\nu(d\mu_1)+\int\int
(1-V_{2,j}(\mu_1))h( \mu_2)\nu(d\mu_1)\nu(d\mu_2)\ee

\bea{}&& h(x_{11},\dots,x_{1,n_1})\to\\&&  V_2(x_{1,1},\dots,x_{1,n_2})
h(x_{(1,n_2+1)},\dots,x_{1,(n_2+n_1)})\nonumber\\&&+(1- V_2(x_{1,1},\dots,x_{1,n_2}))
 h(x_{2,(n_2+1)},\dots,x_{2,(n_2+n_1)})\nonumber\eea

\bigskip

\noi More generally, if $H(\nu)=\int\prod_{i=1}^K  h(\mu_i)\nu(d\mu_i)$ Assume $ V_2\leq 1$, namely
the indicator function of a set. The selection acting on $H$ produces
\bea{}\label{hselc} &&  \int\prod_{i=1}^K  h(\mu_{i})\nu(d\mu_{i}) \longrightarrow\\
&& \sum_{j=1}^K \Big(\Big\{ \prod_{i\ne j}h(\mu_{i})\nu(d\mu_{i})\Big\}\nonumber
\\&&\cdot \left[\int  h(\mu_{j}) V_2(\mu_{j})\nu(d\mu_{j})+\int (1-  V_2(\mu_j))\nu(d\mu_j)\int  h(\mu_{K+1})\nu(d\mu_{K+1})\right]\Big)\nonumber\eea

\[
h(\mu_1)\to V_2(\mu_1) h(\mu_1)+ (1- V_2(\mu_1))\otimes  h(\mu_2).\]

\bigskip

\noi The corresponding set-valued transitions are obtained by restricting the class of functions
$h$ of the form $h(\mu)=\mu^\otimes(A)$ with $A\in\mathcal{I}$  as in subsubsection \ref{sss.sv}.

\medskip

\noi \underline{Coupling}. In view of the assumption of exchangeability, we can couple the $
V_2(\mu) h(\mu)$ and $(1-V_2(\mu))$ terms for the level II selection transitions  at the deme
level, that is, place these at the same deme index in the two resulting summands thus producing a
union of disjoint sets in $\sf{I}^{2*}$. Then as before, all operations are performed
simultaneously on all demes and ranks in the different summands.
\bigskip

\noi \underline{Set-valued transitions}  The set-valued transitions  can then be read off by
restricting the function-valued transitions  to the class of functions $h$ that are based on
indicator functions of sets as in (\ref{hdef}) and noting that due to the coupling the transitions
preserve the decomposition into the union of {\em disjoint}  subsets. These transitions define a
Markov jump process $\{\mathcal{G}^2_t\}_{t\geq 0}$ with countable state space $\sf{I}^{2*}$ and we
denote the resulting generator  by $G^{dual}$. The identity of the action of the corresponding
terms of $G_2$ acting on $\mathcal{H}(\nu,\mathcal{G})$ and the result of the transition of the
set-valued dual, that is,
 \be{}\label{G-G}
G_2 \mathcal{H}(\nu,\mathcal{G})=G^{dual}\mathcal{H}(\nu,\mathcal{G})\quad \text{  for all  }
\nu\in \mathcal{P}(\mathcal{P}(\mathbb{I})),\; \mathcal{G}\in (\mathcal{I})^{\otimes \N,*},\ee is
then immediate by inspection. For example, setting $V_2(\mu)=\mu^\otimes(\otimes_{1,i} B_i)$ and
applying the corresponding selection transition from $G_2$ to  $H(\nu)=\int \prod_{i=1}^K
h_i(\mu_i)\nu(d\mu_i)$ with $h_i(\mu_i)= \mu_i(\otimes_{1,j} A_{ij})$, (\ref{hselc}) yields the
set-valued transitions (\ref{22sel}) with $A_i=\otimes_{1,j} A_{ij}$ which correspond the
$G^{dual}$.
\medskip

\noi   The duality relation (\ref{drep2}) then follows from (\ref{G-G}) (see for example
Proposition 7.10 in \cite{D10} or Chapter 4 of \cite{EK2})). \medskip

 \noi The uniqueness of the solution to the martingale problem is then obtained. In particular,
moment measures of the time marginals  of any solution to the martingale problem are determined by
the dual representation. In turn the moment measures uniquely define the $\mathcal{L}(\Xi(t))$ as
follows: Let $\wt\mu_1,\wt\mu_2,\dots$ be an exchangeable sequence of
$\mathcal{P}(\mathbb{I})$-valued random variables with marginal distributions given by the moment
measures determined by the dual $\mathcal{G}^2_t$  and \be{} \wt\Xi_m:=\frac{1}{m}\sum_{i=1}^m
\delta_{\wt\mu_i}.\ee
 Then by de Finetti's theorem \be{}
\mathcal{L}(\wt\Xi_m)\Rightarrow \mathcal{L}(\Xi(t))\quad \text{as }m\to\infty,\ee that is, the
time marginal laws of any solution are uniquely determined by this limit.

\medskip
\noindent (b) Since the class of function of the form $\mathcal{H}(\cdot,\mathcal{G})$ with
$\mathcal{G}\in (\mathcal{I})^{\otimes \N,*}$ is probability-measure-determining on
$\mathcal{P}(\mathcal{P}(\mathbb{I}))$ this implies that the time marginals of $\Xi(t)$  are
uniquely determined follows from (a). The result (b) then follows from the basic results on dual
martingale problems (see e.g. Theorem 7.9 and Proposition 7.10 in \cite{D10} or Chapter 4 of
\cite{EK2}).

\end{proof}

\bigskip

\subsubsection{Moment calculations}  The dual can be used to compute joint moments and covariance
structures. We illustrate with two simple examples.

\begin{example} \label{ex6}  Consider the case of $\mathbb{I}=\{1,2\}$, no mutation and $V_1(1)=1,V_1(2)=0$, $c=\gamma_2=s_2=0$ but $s_1,\gamma_1>0$.
In order to compute \be{}\lim_{t\to\infty} E_{\delta_{\mu_0}}\left(\int \mu(2)\Xi_t(d\mu)\right)\ee
we use the the dual started with $\mathcal{G}_0=(01)$. Then we have transitions due to selection
and coalescence. As a result \be{} \mathcal{G}_t= (01)^{\otimes_1 n(t)}\ee where $n(t)$ is a birth
and death process with linear birth rate $n\to n+1 $ at rate $s_1n$ and quadratic death rate
$\gamma_1n(n-1)/2$.  As a result $\{n(t)\}$ is ergodic with distribution measure $\{p_k:k\in \N\}$.
Then \be{}\label{lfirst} \lim_{t\to\infty} E_{\delta_{\mu_0}}\left(\int \mu(2)\Xi_t(d\mu)\right)=
E\left(\mu_0^{\otimes_1}(\mathcal{G}_{eq})\right)=\sum_{k=1^\infty} (\mu_0(2))^kp_k.\ee

\noi Next consider \be{}\label{lmixed} \lim_{t\to\infty} E_{\delta_{\mu_0}}\left(\int (\mu(1)\cdot
\mu(2))\,\Xi_t(d\mu)\right).\ee In this case we start the dual with
$\mathcal{G}_0=(10)\otimes_1(01)$. Again the number of ranks is given by a birth and death process
which will eventually reduce to $n(t)=1$ which due to coalescence implies that
$\mathcal{G}(\tau)=\emptyset$ for some finite random time $\tau$.  This implies fixation, that is,
the limit (\ref{lmixed}) is zero.  The corresponding fixation probabilities are then given by the
limiting first moments calculated in (\ref{lfirst}).
\end{example}

\begin{example}
With two types $\{1,2\}$ we take $\mathcal{G}^2_0= (10)_1^{\otimes k_1}\otimes_2 (10)_2^{\otimes
k_2}$ and we get
\[ E_{\Xi_0}\left(\int(\mu(1))^{k_1}\Xi_t(d\mu)\int(\mu(1))^{k_2}\Xi_t(d\mu)\right)=E[\mathcal{H}(\Xi_t,(10)_1^{\otimes k_1}\otimes_2 (10)_2^{\otimes k_2})]
=E_{\mathcal{G}^2_0}[\mathcal{H}(\Xi_0,\mathcal{G}^2_t)].\]
\end{example}

\subsubsection{Coalescent}

The coalescent plays a central role in the study of Moran, Fisher-Wright and Fleming-Viot processes
with neutral types. We now consider the analogous genealogical structure for two-level systems with
$s_1=s_2=0$ which is determined by  the level one and level two coalescence transitions in the
set-valued dual. The genealogy is described by a marked coalescent process analogous to the marked
coalescent process used for spatial processes (see for example \cite{GLW}).

 The state space of the two-level coalescent is the set of marked partitions $(\pi,\zeta(\pi)),
 \pi\in \Pi^{\texttt{I}},\zeta(\pi)\in \mathbb{N}^{|\pi|} $
 where $\Pi^{\texttt{I}}$ is the set of partitions of a countable set $\texttt{I}$ into subsets and for
 $\pi\in\Pi^{\texttt{I}}$, $|\pi|$ denotes the number of subsets in the partition.
 The marks $\{\zeta(i):i=1,\dots,|\pi|\}$
 represent the positions in $\N$ of the subsets and $|\zeta|=|\{k\in\mathbb{N}:\zeta(i)=k\text{ for some }i\in \{1,\dots,|\pi|\}\}|$.  For
 $i=1,\dots, |\zeta|$, let  $n_i(\pi)$ denote the number of subsets with $\zeta=i$ so that
 $\sum_{i=1}^{|\zeta(\pi)|}
 n_i(\pi)= |\pi|$.

A subset in the partition  can jump to a new unoccupied site at rate $c$ and level I coalescence of
two subsets occurs at rate $\gamma_1$ if they are at the same site. On the other hand all the
subsets at two occupied sites combine to form a single site with all these subsets at rate
$\gamma_2$.

Therefore given $(|\zeta(\pi)|, (n_1,\dots,n_{|\zeta(\pi)|}) = (k,(n_1,\dots,n_k))$ the possible
transitions are:
\begin{enumerate}
\item $(k,(n_1,\dots,n_k))\to (k+1,(n_1,\dots,n_i-1,\dots,n_k,1))$ at rate $cn_i 1_{n_i>1}$,
\item  $(k,(n_1,\dots,n_k))\to (k,(n_1,\dots,n_i-1,\dots,n_k))$ at rate $\gamma_1 n_i(n_i-1)$,
\item  $(k,(n_1,\dots,n_i,\dots,n_j,\dots,n_k))\to  (k-1,(n_1,\dots,n_i+n_j,\dots,n_k))$ at rate
$\gamma_2 k(k-1)$.
\end{enumerate}
\begin{proposition} (The multilevel coalescent)\\
(a) Consider the case with $s_1=s_2=c=0$, $\gamma_2>0$ and $\gamma_1>0$. Then the two level coalescent converges to $(1,(1))$.\\
(b) If $c>0$ and $\gamma_1>0,\; \gamma_2=0$, then the coalescent process started at
$(k_0,(n_1,\dots,n_{k_0}))$ with $k_0<\infty$, converges to $(\wt k,(1,\dots,1))$ for some random
$\wt k$.
\end{proposition}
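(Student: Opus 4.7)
The plan is to treat each two-level coalescent as a continuous-time Markov jump process on a countable state space with an absorbing set, and to prove absorption in a.s.\ finite time by exhibiting a non-negative integer-valued Lyapunov functional that strictly decreases at every transition out of the absorbing set, together with a uniform positive lower bound on the total jump rate while the functional is positive.

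For part (a), observe that with $c=0$ no new demes can ever appear, so the number of occupied demes $k=|\zeta(\pi)|$ is non-increasing: level II coalescence lowers $k$ by $1$ at rate $\gamma_2 k(k-1)$, and level I coalescence leaves $k$ unchanged. Consequently $\{k(t)\}$ is a pure death chain on $\{1,\dots,k_0\}$ with rates $\gamma_2 j(j-1)$, whose time to reach $1$ is a sum of $k_0-1$ independent exponentials with rates $\gamma_2 j(j-1)$, $j=2,\dots,k_0$, hence a.s.\ finite. Once the state takes the form $(1,(n_1))$, only level I coalescence remains active, as a Kingman-type pure death chain on $n_1$ with rates $\gamma_1 n_1(n_1-1)$, which reaches $n_1=1$ in a.s.\ finite time. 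The chain is thus absorbed at $(1,(1))$.

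For part (b), with $\gamma_2=0$ no demes ever merge, so $k$ is non-decreasing, and the only non-trivial transitions at an occupied deme are internal coalescence, which lowers $n_i$ by $1$, and migration, which also lowers $n_i$ by $1$ while creating a new singleton deme. Both transitions require $n_i>1$, owing to the indicator $1_{n_i>1}$ in the migration rate and the factor $n_i(n_i-1)$ in the coalescence rate. Introduce the Lyapunov functional
\[
\Phi(\pi,\zeta):=\sum_{i=1}^{|\zeta|}\bigl(n_i(\pi)-1\bigr),
\]
which counts excess occupancy. A coalescence at deme $i$ decreases $\Phi$ by $1$, and a migration out of deme $i$ also decreases $\Phi$ by $1$, since the departing subset opens a new deme of occupancy $1$, contributing $0$ to $\Phi$. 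Hence $\Phi$ strictly decreases at every transition, and at most $\Phi_0=|\pi_0|-k_0$ transitions can occur. Whenever $\Phi>0$ there exists a deme with $n_i\ge 2$, contributing total jump rate at least $2c+2\gamma_1>0$, so holding times are dominated by exponentials of a fixed rate and absorption occurs in a.s.\ finite time. The terminal configuration has $n_i=1$ for every occupied deme, so it takes the form $(\wt k,(1,\dots,1))$ with $\wt k=|\pi_\infty|\le |\pi_0|$, in particular a.s.\ finite.

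The only subtlety worth flagging is whether the indicator $1_{n_i>1}$ makes the set of configurations with all $n_i=1$ genuinely trapping. This is automatic for the present dynamics: migration only sends a subset to a previously unoccupied deme and coalescence is internal to a single deme, so no mechanism can raise an $n_i$ above $1$ once it reaches $1$. This confirms that in part (b) the functional $\Phi$ indeed reaches $0$ and stays there, and similarly in part (a) that the level I collapse at the surviving deme cannot be undone after the level II coalescent has completed.
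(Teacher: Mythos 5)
Your proof is correct and follows essentially the same route as the paper: in (a) the level II coalescence drives $k$ down to $1$ and the remaining deme then collapses by level I coalescence, and in (b) monotonicity of the occupancy numbers forces absorption at a state with all $n_i=1$. The explicit Lyapunov functional $\Phi=\sum_i(n_i-1)$ and the uniform lower bound on the jump rate are just a more careful packaging of the paper's (much terser) monotonicity argument, not a different method.
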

\begin{proof} (a) Due to jumps of type 2 at rate $\gamma_2$  the process will eventually reach an element of the type
$(1,(n_1,\dots,n_i+n_j,\dots,n_1))$. The due to jumps of type 1 the element $(1,(1))$ is then
reached in a finite random time.\\
(b) If $\gamma_2 =0$, the number of occupied sites is nondecreasing and at each site level I
coalescence leads to a single element.

\end{proof}

%The analogue of the  ancestral graph with selection will be discussed below.

\section[Multilevel population systems with two types]{Long-time behaviour of two type  multilevel population systems}

The class of two-level Fleming-Viot systems  obtained by Theorem \ref{T.1} with dual representation
given by Theorem \ref{T.3} describe a rich class of population systems.  The evolution of their
structure over different time scales depends on various parameters including mutation, levels I and
II selection, migration, and demographic stochasticity rates and lead to different classes of
behaviours.

In this section we  consider the simplest  case of a system with two types $\mathbb{I}=\{1,2\}$, no
mutation, migration rate $c$, levels I and II selection rates $s_1,s_2$ and levels I and II
resampling rates $\gamma_1,\gamma_2$. We first consider the deterministic case (i.e. infinite
population case at both levels, $\gamma_1=\gamma_2=0$) and then in the random cases $\gamma_1$
and/or $\gamma_2>0$.

\subsection{Nonlinear measure-valued dynamics with $\mathbf{\gamma_1=\gamma_2=0}$}

\begin{proposition}\label{P.4} Consider the two level system in which $\gamma_2= 0$ and $P(\Xi_0=\nu_0)=1,\; \nu_0\in\mathcal{P}(\mathcal{P}(\mathbb{I}))$.  Then the
$\mathcal{P}(\mathcal{P}(\mathbb{I}))$-valued process $\Xi_t=\nu_t$ is deterministic.
\end{proposition}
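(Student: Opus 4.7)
The plan is to apply the set-valued duality from Theorem \ref{T.3} and to exploit the fact that, when $\gamma_2=0$, the only mechanism in the dual that can couple two otherwise disjoint blocks of demes is level II coalescence. Absent this mechanism, the dual factorises, and this will translate into a variance-zero statement for $H(\Xi_t)$ for every $H$ in a measure-determining subclass of $D_2$, forcing $\Xi_t$ to coincide a.s.\ with a deterministic $\nu_t$.

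First I would fix $H\in D_2$ of the product form $H(\nu)=\prod_{k=1}^{k_0}\int h_k(\mu)\nu(d\mu)$ in (\ref{alg})--(\ref{alg2}) and observe that $H^2$ again belongs to $D_2$: indeed $H(\nu)^2=\prod_{k=1}^{2k_0}\int \wt h_k(\mu)\nu(d\mu)$ with the last $k_0$ factors duplicating the first at a disjoint block of deme indices. Via the dual representation (\ref{drep2}) this corresponds to an initial configuration $\mathcal{G}^2_0=\mathcal{G}^{(1)}\otimes_2 \mathcal{G}^{(2)}\in\sf{I}^{2*}$ in which $\mathcal{G}^{(1)}$ and $\mathcal{G}^{(2)}$ are two isomorphic copies of the dual state encoding $H$, sitting on disjoint ``group 1'' and ``group 2'' deme indices. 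For deterministic $\nu_0$ one has the multiplicativity $\mathcal{H}(\nu_0,\mathcal{G}^{(1)}\otimes_2\mathcal{G}^{(2)})=\mathcal{H}(\nu_0,\mathcal{G}^{(1)})\cdot \mathcal{H}(\nu_0,\mathcal{G}^{(2)})$, and this factorisation is inherited at time $t$ as soon as the $\otimes_2$-decomposition survives the dynamics.

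The key step is to verify that, when $\gamma_2=0$, the set-valued dual preserves the bipartition of demes and that the two blocks evolve as independent copies of the single-block dual. Each level I transition (mutation, genetic drift, level I selection) acts at a specific deme and so inherits its group label; a migration event adjoins a fresh deme populated from its source, which we assign to the same group; a level II selection event at deme $k$ in (\ref{22sel})--(\ref{l2selx}) yields two summands both of which leave the complementary block unchanged and modify or adjoin only indices belonging to the group of $k$, so the set-product distributivity
\[\bigl((\mathcal{G}^{(1)}_{\mathrm{a}}\cup\mathcal{G}^{(1)}_{\mathrm{b}})\otimes_2\mathcal{G}^{(2)}\bigr)
=(\mathcal{G}^{(1)}_{\mathrm{a}}\otimes_2\mathcal{G}^{(2)})\cup(\mathcal{G}^{(1)}_{\mathrm{b}}\otimes_2\mathcal{G}^{(2)})\]
preserves the product form. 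Only level II coalescence can merge demes from the two blocks, and with rate $\gamma_2/2=0$ it never occurs, so one can realise $\mathcal{G}^2_t=\mathcal{G}^{(1)}_t\otimes_2 \mathcal{G}^{(2)}_t$ as a coupling in which $\mathcal{G}^{(1)}_t,\mathcal{G}^{(2)}_t$ are independent copies of the single-block dual. Plugging this into the duality identity gives
\begin{eqnarray*}
E[H(\Xi_t)^2]
&=& E_{\nu_0}\bigl[\mathcal{H}(\Xi_t,\mathcal{G}^{(1)}\otimes_2\mathcal{G}^{(2)})\bigr]
 \;=\; E_{\mathcal{G}^{(1)}\otimes_2\mathcal{G}^{(2)}}\bigl[\mathcal{H}(\nu_0,\mathcal{G}^2_t)\bigr]\\
&=& E_{\mathcal{G}^{(1)}}\bigl[\mathcal{H}(\nu_0,\mathcal{G}^{(1)}_t)\bigr]\cdot E_{\mathcal{G}^{(2)}}\bigl[\mathcal{H}(\nu_0,\mathcal{G}^{(2)}_t)\bigr]
 \;=\; (E[H(\Xi_t)])^2,
\end{eqnarray*}
so $\mathrm{Var}(H(\Xi_t))=0$ for every $H\in D_2$. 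Extracting a countable measure-determining subfamily of $D_2$ (possible by the Remark following Theorem \ref{T.1}) and intersecting the corresponding null sets yields a single deterministic $\nu_t\in\mathcal{P}(\mathcal{P}(\mathbb{I}))$ with $\Xi_t=\nu_t$ a.s.\ for every $t\geq 0$.

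I expect the main technical obstacle to be the bookkeeping around the ``new deme'' index $\ell$ appearing in (\ref{22sel}): one must formalise that the new deme is placed in the group of the selected deme $k$, so that the bipartition really is preserved. Working in the $N_2=\infty$ dual with separate infinite index pools for the two groups, or equivalently driving the ``new-deme'' Poisson clocks of the two groups by independent noises, makes the independence of $\mathcal{G}^{(1)}_t$ and $\mathcal{G}^{(2)}_t$ manifest and removes the ambiguity.
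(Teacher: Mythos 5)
Your proposal is correct and follows essentially the same route as the paper's own (much terser) argument: start the dual from two disjoint $\otimes_2$-blocks encoding $H^2$, observe that with $\gamma_2=0$ no level II coalescence can couple the blocks so they evolve as independent copies, and conclude $\mathrm{Var}(H(\Xi_t))=0$ for a measure-determining family. Your additional bookkeeping about assigning the ``new deme'' indices created by migration and level II selection to separate index pools for the two blocks is a worthwhile clarification of a point the paper leaves implicit, but it is not a different method.
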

\begin{proof}
This follows immediately from the dual representation since the variance of $\rm{Var}(\int
h(\mu)\Xi_t(\mu))=0$ for any $h\in\mathbb{H}$. This can be shown to be zero since all $E[(\int
h(\mu)\Xi_t(\mu))^2]$ is obtained by starting the dual with
\[ h(\mu_1)\otimes_2 h(\mu_2)\]
and since no coalescence occurs the two descendant sets evolve independently.
\end{proof}

Now assume that the mutation, migration and genetic drift parameters are zero and

\be{}V_1(x)= 1_{D}(x),\; D\subset \mathbb{I},\ee and level II fitness function of the form: \be{}
V_2(\mu)= \mu(B)\text{
  with  } B\subset \mathbb{I}.\ee (This can be generalized to $V_2(\mu)=\mu^\otimes(B)$ with $B\subset (\mathbb{I})^K$ for some
$K\in\N$ but we consider here the case $K=1$ to keep things simple.)
\bigskip

 The transitions of the dual due to level I, respectively, level II,
are given by

\be{}\label{E.3.0}  1_C(x_{11})\to 1_{D\cap C}(x_{11})+1_{D^c}(x_{11})\otimes_1 1_C(x_{12})\quad
\text{at rate }s_1,\ee

\be{}\label{E.3.00}  1_C(x_{11})\to 1_B(x_{11})\otimes_11_{C}(x_{12})+1_{B^c}(x_{11})\otimes_2
1_C(x_{21})\quad \text{at rate }s_2.\ee

\bigskip
\begin{theorem}\label{T.5}
Assume that  $\mathbb{I}=\{1,2\}$ with fitness functions \be{}\label{E.3.1} V_1(1)=0,\;
V_1(2)=1,\ee and \be{}\label{E.3.2} V_2(\mu)=  \mu(1),\ee selection rates $s_1\geq 0,\; s_2\geq 2$
 and all other parameters equal to zero.
 Then as $t\to\infty$,\\
 (a) If $s_1>0,\; s_2=0$, and $\nu_0\ne \delta_{(1,0)}$, then $\nu_t \to (1-\nu_0((1,0))) {\delta_{(0,1)}}$,\\
 (b) If $s_1=0$, $s_2>0$, for some  $x_0\in (0,1]$, $\nu_0(\{\mu:\mu(1)\geq x_0\})>0$, then $\nu_t\to
 \delta_{(p^*,1-p^*)}$ with $p^*\geq \int \mu(1)\nu_0(d\mu)$.

\end{theorem}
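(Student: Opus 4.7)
The plan is to exploit Proposition~\ref{P.4}, which guarantees $\Xi_t=\nu_t$ is deterministic when $\gamma_2=0$, and then to handle the two cases by, respectively, the set-valued dual of Theorem~\ref{T.3} and the induced replicator dynamics on $[0,1]$.

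\textbf{Part (a).} With $s_2=0$ and no migration, mutation, or level~II resampling, the demes decouple and only level~I selection drives the dual. I would apply the duality with initial configuration $\mathcal{G}_0=\{1\}^{\otimes_1 k}$ sitting at a single deme. By \eqref{E.3.0} with $D=\{2\}$, each rank $\{1\}$ jumps at rate $s_1$ to $\emptyset\,\cup\,\{1\}\otimes_1\{1\}$, so that $\mathcal{G}_t=\{1\}^{\otimes_1 n_k(t)}$ where $n_k(t)$ is a Yule process started from $k$ with per-capita rate $s_1$; in particular $n_k(t)\to\infty$ a.s. The duality relation then gives
\[ \int \mu(1)^k\,\nu_t(d\mu)=\mathbb{E}_{\mathcal{G}_0}\!\left[\int \mu(1)^{n_k(t)}\,\nu_0(d\mu)\right].\]
Since $0\le \mu(1)^{n_k(t)}\le 1$ and $\mu(1)^{n_k(t)}\to \mathbf{1}_{\{\mu(1)=1\}}$ pointwise, dominated convergence yields the common limit $\nu_0(\{(1,0)\})$ for every $k\ge 1$. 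The Hausdorff moment problem on $[0,1]$ then identifies the limiting pushforward of $\nu_t$ under $\mu\mapsto\mu(1)$ as $\nu_0(\{(1,0)\})\delta_1+(1-\nu_0(\{(1,0)\}))\delta_0$, which gives $\nu_t\to \nu_0(\{(1,0)\})\delta_{(1,0)}+(1-\nu_0(\{(1,0)\}))\delta_{(0,1)}$ weakly (the theorem statement records only the non-fixed atom).

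\textbf{Part (b).} With $s_1=0$ and no mutation, migration, or within-deme drift, each $\mu$ in the support of $\nu_0$ is frozen and only the level~II selection term in $G_2$ acts. Projecting via $\phi(\mu):=\mu(1)$ to $\pi_t:=\phi_*\nu_t\in\mathcal{P}([0,1])$, the $s_2$-term of $G_2$ gives
\[ \frac{d}{dt}\int g\,d\pi_t = s_2\,\mathrm{Cov}_{\pi_t}(g,\mathrm{id}) \qquad \text{for every bounded measurable } g.\]
This is the replicator equation with linear fitness, which has the explicit solution
\[ \pi_t(dx)=Z_t^{-1}e^{s_2xt}\,\pi_0(dx), \qquad Z_t:=\int_{[0,1]}e^{s_2yt}\,\pi_0(dy),\]
as one checks by differentiating under the integral. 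Laplace's method applied to $Z_t^{-1}e^{s_2xt}$ yields $\pi_t\Rightarrow\delta_{p^*}$ where $p^*:=\mathrm{ess\,sup}_{\pi_0}x=\sup\{c\in[0,1]:\pi_0([c,1])>0\}$. The hypothesis $\nu_0(\{\mu:\mu(1)\ge x_0\})>0$ gives $p^*\ge x_0>0$, and $p^*\ge\int\mu(1)\,\nu_0(d\mu)$ is automatic since the essential supremum dominates the mean. Thus $\nu_t\to\delta_{(p^*,1-p^*)}$.

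The one genuine analytic step is the Laplace asymptotic for $\pi_t$ when $\pi_0$ has no atom at $p^*$: one splits the integral at $p^*-\varepsilon$ and uses $\pi_0((p^*-\varepsilon,p^*])>0$ for every $\varepsilon>0$ to show that the upper piece dominates $Z_t$ as $t\to\infty$; an $\varepsilon\downarrow 0$ argument then produces weak convergence to $\delta_{p^*}$. This is routine, and an alternative would be to verify convergence of all polynomial test functions directly from the explicit formula for $\pi_t$.
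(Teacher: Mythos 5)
Your proof is correct, but it takes a genuinely different route from the paper's. The paper's proof is a one-step dual computation: it reads off the first dual transition to obtain the evolution equation $\frac{d}{dt}E_t(\mu(1)) = s_2\,\mathrm{Var}_{2,t}(\mu(1)) - s_1\bigl(E_t(\mu(1))-(E_t(\mu(1)))^2\bigr)$ and then argues qualitatively: in (a) it treats this as a closed logistic ODE, and in (b) it uses monotonicity and boundedness of $E_t(\mu(1))$ to force $\mathrm{Var}_{2,t}(\mu(1))\to 0$ and hence concentration at some $p^*\geq\int\mu(1)\nu_0(d\mu)$. You instead control the full moment hierarchy: in (a) you run the dual for all time, identify it as a Yule process, pass to the limit in every moment, and invoke the Hausdorff moment problem; in (b) you project to $\pi_t=\phi_*\nu_t$ on $[0,1]$, solve the replicator dynamics in closed form as the exponential tilting $Z_t^{-1}e^{s_2xt}\pi_0(dx)$, and apply Laplace's method. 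Your version buys two things the paper's terser argument leaves implicit: in (a) the first-moment equation is not actually closed (since $E_t(\mu(1)^2)\neq (E_t(\mu(1)))^2$ unless $\nu_t$ is a point mass), so tracking all moments as you do is what is really required, and your computation delivers the full limit $\nu_0(\{(1,0)\})\delta_{(1,0)}+(1-\nu_0(\{(1,0)\}))\delta_{(0,1)}$, of which the theorem's stated limit is only the non-fixed part; in (b) you identify $p^*$ exactly as the $\pi_0$-essential supremum of $\mu(1)$, which is strictly sharper than the paper's lower bound by the mean. The only point to tighten is that the forward equation $\frac{d}{dt}\int g\,d\pi_t=s_2\,\mathrm{Cov}_{\pi_t}(g,\mathrm{id})$ is derived from $G_2$ only for $g$ in the polynomial domain; since polynomials are measure-determining on $[0,1]$ and the martingale problem is well posed by Theorem \ref{T.3}, verifying that the tilted family satisfies these moment equations does identify $\pi_t$, but you should state the equation for polynomial $g$ rather than for all bounded measurable $g$.
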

\begin{proof}  We compute $\frac{d}{dt}E_{\nu_0}(\int\mu(1)\nu_t(d\mu))$ using the equivalent dual
form $\frac{d}{dt}E_{\mathcal{G}_0=\{1\}}(\mathcal{H}(\nu_0,\mathcal{G}_t))$. To compute the latter
 consider the first dual transition, either a level I selection jump (\ref{E.3.0})  or a level II
selection jump (\ref{E.3.00}). This yields  $1_1\to 1_1\otimes_1 1_1$ at rate $s_1$ and  $1_1\to
1_1\otimes_1 1_1 + 1_1\otimes_2 1_2$ at rate $s_2$ and therefore

 \noindent
\be{}\label{E.IN} \frac{d}{dt}E_t(\mu(1)) =s_2\text{Var}_{2,t}(\mu(1))
-s_1(E_t(\mu(1))-(E_t(\mu(1)))^2)\ee where
\[E_t(\mu(1))=\int\mu(1)\nu_t(d\mu),\;\text{Var}_{2,t}(\mu(1)) = \int \mu(1)^2\nu_t(d\mu)- (\int \mu(1)\nu_t(d\mu))^2\]

\noindent (a) If $s_2=0$, then (\ref{E.IN}) yields the ode

\be{} \frac{d}{dt}E_t(\mu(1))= s_1[(E_t(\mu(1)))^2-E_t(\mu(1))],\ee and the result follows
immediately.

\bigskip

\noindent (b) If $s_1=0$, then by (\ref{E.IN}) $E_t(\mu(1))$ is nondecreasing and strictly
increasing if $\text{Var}_{2,t}(\mu(1))>0$. Since $E(\mu(1))\leq 1$ and
$\text{Var}_{2,t}(\mu(1))>0$ unless $\nu =\delta_{(x,1-x)}$ for some $x\in [0,1]$,
$\lim_{t\to\infty} \text{Var}_{2,t}(\mu(1)) =0$ and $\nu_t\to \delta_{(p^*,1-p^*)}$ with
$p^*\geq\int \mu(1)\nu_0(d\mu)$.
\end{proof}

In the case $\delta_1=0$, $\delta_2=0$, the system is deterministic.  In order for level II
selection to play a role, diversity in the composition of clusters is required as first pointed out
by Maynard Smith (recall discussion in subsubsection 1.1.1). In particular consider the case where
$\nu_0\in\mathcal{P}(\mathcal{P}(\mathbb{I}))$, let $\widetilde{\nu}_0(dx):= \nu_0(\{\mu:\mu(1)\in
dx\})\in \mathcal{P}([0,1])$. The following is a version of a result of Luo \cite{S-13}.

\begin{theorem}\label{T.6}   Assume (\ref{E.3.1}) and (\ref{E.3.2}), $c=0$,  and $s_1>0$.  Assume that $\widetilde \nu_0(dx)$ has
a continuous density $\widetilde \nu_0(x)$.  Then \\
 (a) If there exists $x_0<1$ such that $\widetilde{\nu}_0([x_0,1])=0$,
 then $\widetilde \nu_t \to \delta_0,\;\; \nu_t\to \delta_{(0,1)}$.\\
(b)  If the density $\widetilde{\nu}_0$ is continuous and positive at $1$, then there exists a
critical $s_2^*$ such that if $s_2<s_2^*$, then $\nu_t \to {\delta_{(0,1)}}$ and if $s_2>s_2^*$,
then there exists an equilibrium distribution $\nu_{eq}$ with $\nu_{eq}(\{\mu:\mu(1)>0\})>0$ .

\end{theorem}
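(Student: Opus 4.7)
\noindent \emph{Proof plan.} The plan is to analyse the marginal $\widetilde\nu_t(dx) := \nu_t(\{\mu : \mu(1) \in dx\}) \in \mathcal{P}([0,1])$, which by Proposition~\ref{P.4} and the dual calculation of Theorem~\ref{T.5} is deterministic and satisfies, in the weak sense, the nonlinear transport/selection equation
\[
\partial_t \widetilde\nu(t,x) = \partial_x\bigl(s_1 x(1-x)\widetilde\nu(t,x)\bigr) + s_2(x - \bar x_t)\widetilde\nu(t,x), \qquad \bar x_t := \int_0^1 y\,\widetilde\nu_t(dy),
\]
i.e.\ the $\gamma_1 = 0$ specialisation of Kimura's equation~(\ref{Kim}) with $V_2(\mu)=\mu(1)$. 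The within-deme characteristic ODE $\dot x = -s_1 x(1-x)$ has explicit flow $\phi_t(x_0) = x_0/(x_0 + (1-x_0)e^{s_1 t})$, which fixes $0$ and $1$ and satisfies $\phi_t(x_0) \to 0$ for every $x_0 \in [0,1)$.

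For part~(a) the level~II selection term is purely multiplicative and does not enlarge the support of $\widetilde\nu_t$, so $\mathrm{supp}(\widetilde\nu_t) \subseteq \phi_t(\mathrm{supp}(\widetilde\nu_0)) \subseteq [0,\phi_t(x_0)]$ under the hypothesis; since $\phi_t(x_0) \to 0$ this forces $\widetilde\nu_t \Rightarrow \delta_0$ and hence $\nu_t \to \delta_{(0,1)}$.

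For part~(b) I would integrate the PDE along characteristics. Writing $X_s(x_0) := \phi_s(x_0)$, the cancellation of the transport term gives
\[
\widetilde\nu(t, X_t(x_0)) = \widetilde\nu_0(x_0)\exp\!\left(\int_0^t \bigl[s_1(1-2X_s(x_0)) + s_2(X_s(x_0) - \bar x_s)\bigr]\,ds\right).
\]
Since $X_s(1) \equiv 1$, along the boundary characteristic the density grows or decays at rate $s_2(1-\bar x_s) - s_1$. Changing variables $x = X_t(x_0)$ in the mass integral, using $X_t'(x_0) \sim e^{s_1 t}$ near $x_0 = 1$, the linearisation $\phi_t^{-1}(1-\epsilon) \approx 1 - \epsilon e^{-s_1 t}$, and the assumption $\widetilde\nu_0(1) = c > 0$, produces the leading-order estimate
\[
\widetilde\nu_t([1-\epsilon,1]) \sim c\,\epsilon\,\exp\!\left(\int_0^t \bigl[s_2(1-\bar x_s) - s_1\bigr]\,ds\right) \qquad (\epsilon \downarrow 0).
\]
The sign of the long-time average of $s_2(1-\bar x_s)-s_1$ then decides the outcome: if this average is eventually negative, the boundary mass decays exponentially and combining this with the support argument of part~(a) on intervals $[0,1-\delta]$ gives $\widetilde\nu_t \Rightarrow \delta_0$; if it is eventually positive, the near-$1$ mass stays bounded below, forcing $\bar x_t$ away from $0$. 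The threshold $s_2^*$ is then characterised as the infimum of values of $s_2$ for which the stationary version of the PDE admits a self-consistent solution $\nu_{\mathrm{eq}}$ whose mean $\bar x_{\mathrm{eq}}$ satisfies $s_2(1-\bar x_{\mathrm{eq}}) \geq s_1$; existence of $\nu_{\mathrm{eq}}$ in the supercritical regime is obtained by a Krylov--Bogolyubov time-averaging argument on the compact space $\mathcal{P}([0,1])$, the uniform lower bound on boundary mass ruling out $\delta_0$ as a weak limit point.

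The main obstacle is the supercritical construction. The stationary equation $\partial_x(s_1 x(1-x)\nu_{\mathrm{eq}}) + s_2(x-\bar x_{\mathrm{eq}})\nu_{\mathrm{eq}} = 0$ integrates exactly and its density behaves like $(1-x)^{-1 - s_2(1-\bar x_{\mathrm{eq}})/s_1}$ at $x=1$, so integrability of $\nu_{\mathrm{eq}}$ at the boundary is marginal and the equilibrium may carry a singular boundary component precisely at the threshold. Verifying the self-consistency relation for $\bar x_{\mathrm{eq}}$, showing that $s_2^* \in (0,\infty)$, and ruling out escape of mass to $x=1$ along the time-averaged subsequences are the delicate technical steps; this is where one would follow the refined PDE analysis of Luo~\cite{S-13} and Luo--Mattingly~\cite{LM15}.
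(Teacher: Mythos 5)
Your reduction to the one-dimensional integro-PDE and your treatment of part (a) are essentially the paper's own argument: the paper quotes Luo's closed-form solution of (\ref{E.Luo}) along the characteristics of $\dot x=-s_1x(1-x)$, and part (a) is exactly the observation that the argument $xe^{t}/(1+x(e^{t}-1))$ of $\widetilde\nu_0$ exceeds $x_0$ for $x\ge\varepsilon$ and $t$ large; your support-propagation version of this is correct. Your characteristic representation of $\widetilde\nu(t,X_t(x_0))$ and the identification of the boundary rate $s_2(1-\bar x_s)-s_1$ at $x_0=1$ are also correct and are the right quantities to look at.

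Part (b), however, has genuine gaps as proposed. First, characterizing $s_2^*$ by existence of self-consistent stationary solutions cannot work: writing $\lambda=s_2/s_1$, the stationary equation integrates exactly to $\nu_{\mathrm{eq}}(x)=C\,x^{\lambda\bar x-1}(1-x)^{\lambda(1-\bar x)-1}$, a Beta$(\lambda\bar x,\lambda(1-\bar x))$ density whose mean is automatically $\bar x$; thus a self-consistent stationary solution exists for \emph{every} $s_2>0$ and every $\bar x\in(0,1)$, and your infimum would be $0$. (Relatedly, your boundary exponent has the wrong sign: it is $\lambda(1-\bar x_{\mathrm{eq}})-1$, not $-1-\lambda(1-\bar x_{\mathrm{eq}})$; the equilibrium actually selected is the Beta$(\lambda-1,1)$ density $(\lambda-1)x^{\lambda-2}$, bounded at $x=1$ and singular at $x=0$, so the ``marginal boundary integrability'' issue you flag does not arise.) The threshold is a statement about which member of this family, if any, the \emph{dynamics} selects, and that is decided by $\widetilde\nu_0$ near $x=1$. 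Second, Krylov--Bogolyubov averaging applied to this deterministic nonlinear flow on $\mathcal{P}([0,1])$ yields an invariant measure on $\mathcal{P}(\mathcal{P}([0,1]))$, not a fixed point $\nu_{\mathrm{eq}}$ of the flow, so it does not deliver the equilibrium; and the uniformity in $t$ of your $\epsilon\downarrow 0$ boundary-mass estimate, which your dichotomy needs, is never established. The paper closes part (b) without any of this: substituting the characteristic formula into the normalization $\int_0^1\widetilde\nu(t,x)\,dx=1$ determines $\int_0^t\bar x_s\,ds$ self-consistently and yields the explicit solution (\ref{E.Luo2}), from which one reads off directly that the dichotomy occurs at $\lambda=1$ (i.e.\ $s_2^*=s_1$), that $\int x\,\widetilde\nu(t,dx)\to 0$ for $\lambda<1$, and that $\widetilde\nu_t\to(\lambda-1)x^{\lambda-2}$ with mean $(\lambda-1)/\lambda$ for $\lambda>1$. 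You already have the representation needed; push it to that closed form instead of passing to a fixed-point or averaging argument.
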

\begin{proof} This was established by S. Luo \cite{S-13} (see (A.5) in the case $\eta= U$) by solving an integro
partial differential equation for $\widetilde{\nu}_t(x)$, the density of the distribution of
$\mu_t(1)$. Luo's equation is given by \be{}\label{E.Luo} \frac{\partial}{\partial
t}\widetilde{\nu}(t,x)=\frac{\partial}{\partial
x}(\widetilde{\nu}(t,x)x(1-x))+\lambda\widetilde{\nu}(t,x)[x-\int_0^t \widetilde{\nu}(t,y)ydy]\ee
with solution \be{}\label{E.Luo2}
\widetilde{\nu}(t,x)=\widetilde{\nu}_0\left(\frac{xe^t}{1+x(e^t-1)}\right)e^{t-\lambda\int_0^th(s)ds}[1+x(e^t-1)]^{(\lambda-2)}\ee
where $\lambda=\frac{s_2}{s_1}$ and $h(t)=\int_0^1y\widetilde{\nu}(t,y)dy$.

\noindent (a) For any $\varepsilon >0$ there exists $t_0(\varepsilon)$ such that for $x\geq
\varepsilon$ and $t\geq t_0(\varepsilon)$, $\frac{xe^t}{1+x(e^t-1)}\geq x_0$ and therefore the
right side of (\ref{E.Luo2}) is 0.

 \noindent (b) In the case $\mu_0$ is uniform on $[0,1]$ and $\lambda\ne 1$, (\ref{E.Luo2}) becomes
\be{} \widetilde{\nu}(t,x)=\frac{(e^t-1)(\lambda
-1)}{e^{t(\lambda-1)}-1}[1+x(e^t-1)]^{(\lambda-2)}\ee where $\lambda = s_2/s_1$. When $\lambda <1$,
we have \be{} \int_0^1 x\widetilde{\nu}(t,dx)\to 0.\ee When  $\lambda >1$, we get
\be{}\label{E.lim} \widetilde{\nu}_{eq}(1)= \lim_{t\to\infty} \widetilde{\nu}(t,1) \to \lambda
-1,\ee and
 \be{} \int_0^1 x\widetilde{\nu}(t,dx)\to \frac{\lambda -1}{\lambda}.\ee

\end{proof}

\bigskip

From Theorem \ref{T.6}(a) it follows that the survival of type $1$  depends only on the initial
density $\widetilde{\nu}(0,x)$ near $x=1$ and (\ref{E.lim}) remains positive if the initial density
is positive at $x=1$. This means that level II selection can overcome level I selection with two
types only if there is a positive density of  sites (subpopulations) at time $t=0$ with arbitrarily
small (or zero) proportions of the individually more fit types.  We next consider the role of
randomness and demonstrate that level one positive genetic drift can lead to a phase transition
with the possibility of survival of inferior types for any initial $\nu_0$ provided that $\int
\mu(1)\nu_0(d\mu)\in (0,1)$.

\subsection{Phase transitions: the role of randomness}

In this subsection we again consider the case in which level two selection favours colonies that
include altruistic or cooperative types.  In the previous subsection level II selection exploited
the diversity in the initial distribution among demes.

\subsubsection{Level I randomness}\label{sss.rand1}

In this subsection we now consider the parameter regions for dominance of level I or level II
selective effects and the transition between these phases where level II selection acts on the
diversity in deme composition resulting from local genetic drift at each deme, that is, when
$\gamma_1>0$.

\begin{theorem}\label{T.7}
  Consider the case $\mathbb{I}=\{1,2\}$.
Assume that  , $c>0$, $\gamma_1 >0$,
 $\gamma_2=0$, $s_1\geq 0$, $s_2\geq 0$,
 with $\int
\mu(1)\nu_0(d\mu)\in (0,1)$.

\[ V_1(1)=0,\;V_1(2)=s_1.\] The migration rate is $c$ and the deme fitness is

\[V_2(\mu)=s_2\mu(1).\]
\medskip
\noi (a) Assume $m_{12}=m_{21}=0$ and $s_2=0,\, s_1 >0$.  Then $\int_{\mathcal{P}(\mathbb I)}
\mu(2)\nu(d\mu)\to 0$ with exponential decay rate.

\noi (b) Assume $m_{12}=m_{21}=0$ and $s_1=0,\, s_2 >0$.  Then $\int_{\mathcal{P}(\mathbb I)}
\mu(1)\nu(d\mu)\to 0$ with exponential decay rate.

 \noi(c) Assume $m_{12}=m_{21}=0$. Then for fixed $c>0,\;\gamma_1>0,\; s_1>0$, there is a critical value $s_2^*(c,\gamma,s_1)\in (0,\infty)$ such that level II selection dominates, that is
  then for $\varepsilon
>0$,
\[{  \nu_t(\{\mu:\mu(2)>\varepsilon\})\to 0,}\]
 if $s_2>s_2^*$
and level I selection dominates if $s_2<s_2^*$, that is,
 for $\varepsilon >0$,
\[{ \nu_t(\{\mu:\mu(1)>\varepsilon\})\to 0.}\]

\noi(d) Assume that $m_{12},m_{21}>0$.  The there exists a unique equilibrium and the system
converges to the equilibrium measure as $t\to\infty$.

%Stability of stationary states - nonlinear dynamics

\end{theorem}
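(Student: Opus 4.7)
The plan is to apply the set-valued dual $\mathcal{G}^2_t$ constructed in Theorem \ref{T.3}, which provides, for each $f(\mu) = \mu^{\otimes}(A)$ with $A\in \mathcal{I}$, the identity $E_{\nu_0}[\int f(\mu)\nu_t(d\mu)] = E_{\mathcal{G}_0}[\mathcal{H}(\nu_0,\mathcal{G}^2_t)]$. All four assertions will be reduced to asymptotic statements about the Markov jump process $\mathcal{G}^2_t$, whose jumps combine level I selection (in-deme rank branching at rate $s_1$), level II selection (a branching that opens a new deme at rate $s_2$), level I coalescence (merger of two same-deme ranks at rate $\gamma_1$), and migration (a rank moves to a fresh deme at rate $c$).

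For parts (a) and (b) I would start the dual from a single singleton rank $\mathcal{G}_0 = 1_{\{i\}}$ at one deme. In (a), with $s_2 = 0$ and no mutation, the only active mechanisms are in-deme selection births, local coalescences, and migration opening new demes each with a single rank, so per-deme rank counts evolve as birth-death chains with supercritical branching tempered by quadratic coalescence and the total number of occupied demes grows linearly in time. Evaluating $\mathcal{H}(\nu_0,\cdot)$ on the resulting configuration produces a product of per-deme factors each strictly below $1$ whenever $\nu_0$ is not supported on a monomorphic state, so the exponential growth of the count of these factors inside $\mathcal{G}^2_t$ forces exponential decay of the targeted moment. Part (b) is formally analogous, with the level II selection transition (\ref{E.3.00}) in place of the level I branching: each level II jump opens a new occupied deme contributing a sub-unit factor to $\mathcal{H}$, so the moment again decays exponentially.

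Part (c) is the main obstacle. Here $c$, $\gamma_1$, $s_1$ and $s_2$ are all positive, the dual combines every mechanism, and the critical $s_2^*$ must arise from a quantitative balance. My plan is a separation-of-timescales argument. On the fast scale, within each occupied deme the rank count equilibrates to a stationary distribution $\pi_{s_1,\gamma_1,c}$ of the in-deme birth-death-migration chain. On the slow scale, the number of occupied demes is approximated by a one-type branching process whose effective birth rate $\lambda_+(s_2; s_1,\gamma_1,c)$ is the $\pi_{s_1,\gamma_1,c}$-expectation of the rate of level II selection events, and whose effective death rate $\lambda_-$ captures demes being driven to fixation at the level-I-favoured monomorphic state and then ceasing to contribute. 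I would define $s_2^*$ implicitly through $\lambda_+(s_2^*) = \lambda_-$; monotonicity of $\lambda_+$ in $s_2$ together with $\lambda_+(0) = 0 < \lambda_-$ and $\lambda_+(s_2)\to\infty$ as $s_2\to\infty$ then produces a unique positive finite threshold, and the subcritical/supercritical dichotomy for the slow branching process yields the two regimes in the statement. The technically demanding step is the averaging, i.e.\ rigorously replacing the fast dual dynamics by its equilibrium in the slow-scale analysis; this is where I expect most of the work to sit.

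For part (d) I would exploit the mutation transitions in the dual. With $m_{12}, m_{21} > 0$, the indicator $A_{i,j}\subseteq\{1,2\}$ at each rank is refreshed at a strictly positive rate independently of history, providing a Doeblin-type regeneration. Combined with level I coalescence, which dissolves excess ranks within each occupied deme, and the finiteness $|\mathbb{I}| = 2$, a coupling argument in the spirit of \cite{DG-14} produces a unique invariant law $\pi_{\mathrm{eq}}$ for $\mathcal{G}^2_t$ together with exponential convergence to it. The duality relation (\ref{drep2}) transfers this to a unique stationary law $\nu_\infty\in\mathcal{P}(\mathcal{P}(\mathbb{I}))$ for $\Xi_t$, and $\mathcal{L}(\Xi_t)\to\delta_{\nu_\infty}$ follows from the continuity of $\mathcal{H}$ in its second argument together with the convergence of $\mathcal{G}^2_t$ to stationarity.
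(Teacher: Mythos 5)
Your overall strategy --- reduce all four assertions to the asymptotics of the set-valued dual via the duality relation of Theorem \ref{T.3} --- is the paper's strategy, and your treatment of (a), (b) and (d) is essentially the argument given there: the dual started from a singleton indicator develops into a disjoint product of sub-unit factors over a growing collection of occupied demes, forcing the moment to zero, and positive mutation makes the relevant dual states recurrent, giving ergodicity. Two corrections on those parts. First, the number of occupied demes does not grow linearly: each occupied deme spawns new demes at a positive per-deme rate (selection followed by migration), so the deme count is a \emph{supercritical CMJ branching process} growing exponentially --- this is precisely how the paper extracts the Malthusian parameter $\alpha_1$ that quantifies the claimed exponential decay. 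Second, in (d) the dual with positive mutation is not so much refreshed toward a stationary law as driven, with probability one, to the absorbing state $\mathbb{I}^{\N}$ at each rank; the equilibrium moments are the corresponding absorption probabilities (the paper reduces (d) to its ergodicity theorem, whose proof runs exactly this way).

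The genuine gap is in (c). Your plan defines $s_2^*$ through a fast--slow averaging that you explicitly do not carry out, so the existence of the threshold --- which is the entire content of (c) --- is not established; and there is in fact no small parameter here ($s_1,s_2,c,\gamma_1$ are all fixed and the in-deme and between-deme dynamics run on the same $O(1)$ timescale), so the averaging step is not merely technical but needs a substitute. Moreover your effective death rate $\lambda_-$ (``demes driven to fixation at the level-I-favoured state'') is a forward-process notion with no clean counterpart in the dual. The paper's mechanism is different and more concrete: it isolates the two \emph{irreversible} compound dual events --- level I selection followed by migration before coalescence, with effective rate $s_1\frac{2c}{2c+\gamma_1}$, and level II selection followed by coalescence before migration, with effective rate $s_2\frac{\gamma_1}{2c+\gamma_1}$ --- and observes that a level I selection event acting at a deme where a level II event has already occurred only \emph{partially reverses} it (the resulting set is squeezed between the pre- and post-level-II sets), and symmetrically. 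This partial-reversal bound lets one dominate the competition by a pair of explicit birth-and-death processes whose super/subcriticality produces the two regimes and locates the threshold at $s_2\gamma_1=s_1c$ (Kimura's condition). If you wish to salvage your route, the rigorous version of your ``fast equilibrium'' is the stable age distribution of the competing CMJ processes, which is what the paper invokes; you would still need monotonicity and continuity of the effective birth rate in $s_2$ to conclude that the threshold is unique.
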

\begin{proof} (a) and (b).  We use  the dual $\{\mathcal{G}_t\}$ with initial value
$\mathcal{G}_0= \{2\}\;(\text{  or  }\{1\})$ to compute the first moment of $E_{\Xi_0}\left[ \int
\mu(2)\Xi_t(d\mu)\right]= E_{\mathcal{G}_0}\left[ H(\Xi_0, \mathcal{G}_t)\right]$ where
$\Xi_0=\delta_{\mu_0}$.  Let $p=\mu_0(2)$.

We now identify two basic combinations of transitions that will  either increase or decrease this
expression.

\noi (1) Level I selection followed by migration before coalescence.
\medskip

\noi  Transitions in terms of indicator functions of the sets,
\begin{eqnarray*}
1_{2}(1)  && \rightarrow1_{2}(1)+1_{1}(1)\otimes_1 1_{2}(1)\quad\text{level I selection at rate }s_1\\
&& \rightarrow1_{2}(1)+1_{1}(1)\otimes_2 1_{2}(2)\;\;\text{migration}\end{eqnarray*} so that after
integration we obtain \be{} p  \rightarrow p+(1-p)p=2p-p^{2}=1-(1-p)^{2}.
\ee%
\noi Note that after the migration step, this change can no longer be reversed  by a coalescence at
site $1$. Noting that migration before coalescence occurs with probability $\frac{2c}{2c+\gamma_1}$
so that this combination occurs with effective rate $s_1\frac{2c}{2c+\gamma_1}$.

\noi Similarly these transition result in
\begin{eqnarray*}
1_{1}(1)  && \rightarrow0+1_{1}(1)\otimes_11_{1}(1)\quad\text{level I selection}\\
&& \rightarrow1_{1}(1)\otimes_21_{1}(2)\;\;\text{migration}\\
(1-p)  && \rightarrow(1-p)^{2}%
\end{eqnarray*}

\noi In the absence of level II selection this can be identified with a Crump-Mode-Jagers (CMJ)
branching process (see \cite{D10}(3.1.4), \cite{N} )  in which individuals are occupied sites and
during their lifetimes these individuals produce new offspring sites.  Since the death rate is zero
this is a supercritical branching process with exponential growth rate, the Malthusian parameter
$\alpha_1$ (see \cite{DG-14} subsubsection 8.3.4) which can be represented in terms of the stable
age distribution \cite{DG-14},(8.167). This proves (a).

%In terms of indicator functions $1_1=(10),\; 1_2=(01)$ we have

%$(10)_1\to (10)_1\otimes (10)_1\to (10)_1\otimes (10)_2$. {$\quad p\to p^2$, rate
%$s_1\frac{2c}{2c+\gamma_1}$}
\medskip
\bigskip

 \noi (2)  Level II selection. We consider the effect of a level II selection event.  followed by coalescence before migration.
\begin{eqnarray*}
1_{2}(1)  && \rightarrow1_{1}(1)\otimes_1 1_{2}(1)+1_{2}(1)\otimes_2 1_{2}(2)\quad\text{ level II
selection at rate }s_2\\&& \rightarrow 1_{1}(1)\otimes_21_{2}(2)+1_{2}(1)\otimes_21_{2}(2)=
1_2(2)\\&&\;\;\;\; \qquad\qquad\hspace{2cm}\text{migration before coalescence with probability
}\frac{2c}{2c+\gamma_1}
\\&& \rightarrow 0+1_{2}(1)\otimes_21_{2}(2)\;\;\;\;\text{coalescence before migration with
probability }\frac{\gamma_1}{2c+\gamma_1}  \end{eqnarray*} so that after integration we have

\be{} p\to p,\hspace{2cm} p\to p^2,\ee in the two cases.

Similarly in the second case (coalescence before migration) we have
\medskip%
\begin{eqnarray*}
1_{1}(1)  && \rightarrow1_{1}(1)\otimes_11_{1}(1)+1_{2}(1)\otimes_2 1_{1}(2)\quad\text{ level II selection}\\
&& \rightarrow1_{1}(1)+1_{2}(1)\otimes_21_{1}(2)\;\;\;\;\text{coalescence before migration with
probability }\frac{\gamma_1}{2c+\gamma_1},\end{eqnarray*}
 so that after integration we have
\be{}(1-p)   \rightarrow(1-p)+p(1-p)=1-p^{2}, \ee and this occurs with effective rate
$s_2\frac{\gamma_1}{2c+\gamma_1}$. \medskip
 In the absence of level I selection this again produces a supercritical CMJ branching process with Malthusian parameter
 $\alpha_2$ in which
individuals are occupied sites (where an occupied site is a site at which the set is not
$\mathbb{I}$).  This proves (b).

\bigskip

 \noindent (c) When both $s_1>0,\; s_2 >0$ there is a competition  between the two levels of
selection.  In the dual setting this corresponds to the the competition between two branching
mechanisms, that is, {\em competing CMJ branching processes}.

 \medskip

 We now focus on the interaction of the competing CMJ processes. To do this we first consider effect of a level I transition at a site where a level II transition
has already occurred. This leads to \be{} 1_2\to_{II}  1_2\otimes_2 1_2\to_{I} 1_2\otimes_2 1_2
+1_2\otimes_21_1\otimes_2 1_2,\ee that is, in terms of sets \[ \{2\}\supset
(\{2\}\otimes_2\{2\})\subset(\{2\}\otimes_2\{2\})\cup  (\{2\}\otimes_2\{1\}\otimes_2\{2\})\subset
\{2\}\]
 which after integration leads to \be{}  p\to_{II} p^2\to_{I}
p^2+p^2(1-p) \in [p, p^2]. \ee This means that a level I transition after a level II transition
partially reverses the first effect (but does not overshoot the reversal).  (The same happens if a
level II follows a level I transition.)  Therefore we can obtain a bound to the decrease  in the
mean of type 2 due to level II transitions by completely reversing them at the rate of level I
transitions.

First, assume that $s_1\frac{c}{c+\gamma_1}>s_{2}\frac{\gamma_1}{\gamma_1+c}$. We can construct a
birth and death process with births (new factors $1_1(\cdot)$ via level I selection) at rate
$s_1\frac{c}{c+\gamma_1}$.  We note that we can obtain a domination by letting the action of level
II selection to remove a $1_1(\cdot)$ (i.e. replacing it by $1_{2\cup 1}$ rather than something
intermediate), that is  this produces a death rate $s_2\frac{\gamma_1}{c+\gamma_1}$. Therefore  the
resulting birth and death process is supercritical and goes to infinity. Thus in this case,  if
$\int\mu(2)\nu_0(d\mu)>0$, then  $\int\mu(2)\nu_t(d\mu)\to 1$ and type 2 takes over in the limit in
the McKean-Vlasov system.

 Similarly, if $s_2\frac{c}{c+\gamma_1}>s_{1}\frac{\gamma_1}{\gamma_1+c}$ then we have a birth process with  $1_2$ factors produced due to level
 II
selection at rate $s_2\frac{\gamma_1}{c+\gamma_1}$ and a removal of these factors by level I
selection at rate $s_1\frac{\gamma_1}{c+\gamma_1}$.  Therefore if $\int\mu(1)\nu_0(d\mu)>0$, then
$\int\mu(1)\nu_t(d\mu)\to 1$ and type $1$ wins out and takes over in the McKean-Vlasov system.

\bigskip

\noi(d) This is a special case of Theorem \ref{T.ergodic2}.

\end{proof}

\bigskip

\begin{remark} We can also consider the {\em linear stability} of the fixed points $\delta_{(1,0)},\delta_{(0.1)}$
by computing
\[ \frac{d}{dt}E_{(\varepsilon,1-\varepsilon)}(\mu(1))|_{t=0} = {s_2 \gamma_1}-{s_1c} \]
 for small $\varepsilon$ (e.g. under the condition $s_2\leq c$ discussed below). This
is given by the first change due to a jump of the dual process started at $1_1$. This means that
level II selection prevails if \be{kcond}{s_2 \gamma_1}>{s_1c}.\ee Condition (\ref{kcond}) was
derived by Kimura \cite{K-83} using the properties of the equilibria of the solution of equation
\ref{Kim} for the density $U(t,x)$.
  Existence and
uniqueness  of the solution to (\ref{Kim}) was established by Shimakura \cite{Sm-85}), namely, for
any initial distribution there exists a unique solution.  Also see Shiga \cite{Sh-87} for a
generalization to the multitype case. A more complete description of the long-time behaviour is
proved in Ogura and Shimakura \cite{OS-87} Theorem 3(iii) (in the case $s_2\leq c$ so that
$s_1/\gamma_1 \leq 1$ if also $s_2>s_1c/\gamma_1$ which corresponds to the case in which the
expected number of dual factors at a site is bounded above by 2). They also obtain a related result
in Theorem 5 in the case in which the mutation rates satisfy $m_{12}>0,m_{21}>0$ or
$m_{12}>0,m_{21}=0$ or $m_{12}=0,m_{21}>0$. These results were obtained using ODE methods and
explicit solutions in terms of hypergeometric functions.

The ODE methods cannot be extended to populations having more than two types or systems with level
II coalescence. The objective of this paper is to develop the dual representation and the above
model of competing branching mechanisms to study the long-time behaviour of multitype-multilevel
systems. An example involving three types is given in Theorem \ref{T.cooperation}.
\end{remark}

\subsubsection{The case $\gamma_1>0,\;c=0$}

 Consider the case again with no mutation, level II  fitness
function $V_2(\mu)=\mu(2)$, $s_2>0$, $c=\gamma_2=0$ but with $\nu_0=\delta_{\mu_0}$, that is, no
initial diversity in the composition of demes but with $\mu_0(i)>0.\; i=1,2$.
 Then level II selection can dominate, that is, $\mu_t(2)\to 1$  only if
$\gamma_1>0$.   This means that group selection cannot be effective if $\gamma_1=0$ in the case
$\nu_0=\delta_{\mu_0}$.

In the case $c=0$, $\gamma_2=0$, $\gamma_1>0$,  fixation occurs within each deme so that eventually
the competition is between demes of type $1$ and demes of type $2$ and then $\mu_t(2)\to 1$  if
$s_2>0$.  To verify this we have \bea{} &&(10)\to (01)\otimes_1(10)+(10)\otimes_2(10)\quad\text{ at
rate  }s_2\\&&\qquad \to (10)\otimes_2(10)\quad\text{ at rate  }\gamma_1\nonumber\eea

\bea{}&& (10)\to (10)+(01)\otimes_1(10)\quad\text{ at rate }s_1\\&&\qquad \to (10) \quad\text{at
rate }\gamma_1.\nonumber\eea Recalling that the coalescence dominates level I selection here due to
the quadratic rate  then effectively the dual develops as $(10)^{\otimes_2 k(t)}$ and
$k(t)\to\infty$. Then if $\mu_0(1)<1$, $\nu_t=\delta_{\mu_t}$ and $\mu_t(1)\to 0$ as $t\to\infty$.

\subsubsection{Level II randomness $\mathbf{\gamma_2>0}$}

\beP{}   (a) If $\gamma_1=0$,   $\mu_0(2)<1.\;\nu_0$-a.s., and $s_1>0,\;\gamma_2>0,\; c>0$, then
\be{}E\left[\int\mu(2)\Xi_t(d\mu)\right]\to 0\text{   as }t\to\infty.\ee
\medskip

\noi (b) If $\gamma_1>0$ and $\gamma_2>0$, then fixation occurs, that is,
\be{}{}E\left(\int\mu(1)\Xi_t(d\mu)\cdot\int\mu(2)\Xi_t(d\mu)\right)\to 0,\quad \text{as   }
t\to\infty.\ee
\end{proposition}
\begin{proof}
(a) This follows by a simple dual calculation as follows: \be{} (01)\to (01)\otimes_1 (01) \text{
as rate }s_1,\ee

\be{} (01)\to (01)\otimes_1 (01) \to (01)\otimes_2 (01)\text{  at rate }c,\ee but then

\be{}(01)\otimes_2 (01)\to (01)\otimes_1(01)\text{  at rate }\gamma_2,\ee
 and \bea{} &&(01)\to (01)\otimes_1 (01)
+(10)\otimes_2(01)\text{  at rate } s_2\\&&\qquad\to (01)\otimes_1 (01) +(10)\otimes_1(01)=
(01)\text{ at rate } \gamma_2\nonumber\eea Recalling that the coalescence dominates level II
selection here due to the quadratic rate  then effectively the dual develops as $(01)^{\otimes_1
k(t)}$ and $k(t)\to\infty$. This implies that level II selection has no long term effect and the
level I selection leads to $\int (\mu(2))^{k(t)}\nu_0(d\mu)\to 0,\quad\text{as }t\to\infty$.
\medskip

\noi (b) This follows by a two-level  version  of the calculation in Example \ref{ex6}.
\end{proof}

\bigskip
\section{Multitype multilevel population systems}

In this section we consider the  evolution of more complex population systems  with  possibly many
types, multilevel structure and different combinations of the basic mechanisms.  In this case it is
no longer possible to use the methods of one dimensional nonlinear dynamics or  one dimensional
diffusions, that is, methods involving ordinary differential equations.  We will outline the
application of the dual representations in some examples in this class.

\subsection{ Equilibria and fixation probabilities}

An important application of the dual is to obtain results on ergodic properties of multilevel
systems.  We begin by reviewing the results for the case with only level I selection  in
\cite{DG-14}.

\begin{theorem}\label{T.ergodic} {\em Ergodicity of McKean-Vlasov systems with level I selection.}
Assume that $\gamma_2=0$ and  that at time $t=0$, all demes have the same individual distribution,
say $\mu_0$, that is $\nu_0=\delta_{\mu_0}$.  If the  mutation rates are positive on $\mathbb{I}$
and positive migration rate $c>0$, then the limiting empirical process is given by a deterministic
McKean-Vlasov dynamics $\{\nu_t\}_{t\geq 0}$ where $\nu_t=\mathcal{L}({\mu_t})$. Moreover,  as
$t\to\infty$, $\nu_t$ converges to a unique equilibrium $\nu_{eq}$.
\end{theorem}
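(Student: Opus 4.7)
The plan is to combine the dual representation of Theorem \ref{T.3} with the fact from Proposition \ref{P.4} that $\gamma_2=0$ forces the $\mathcal{P}(\mathcal{P}(\mathbb{I}))$-valued trajectory $\{\Xi_t\}$ to be deterministic, so that $\nu_t:=\Xi_t$ is an ordinary trajectory in $\mathcal{P}(\mathcal{P}(\mathbb{I}))$. With $\nu_0=\delta_{\mu_0}$, $\nu_t$ is the one-time marginal law of a tagged deme $\mu_t$ evolving under mutation, level I selection, level I resampling at rate $\gamma_1$, and mean-field migration driven by $\nu_t$ itself. The task is thus to exhibit a unique fixed point of this nonlinear flow and show convergence.

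For \emph{existence} of an equilibrium, I would invoke compactness of $\mathcal{P}(\mathcal{P}(\mathbb{I}))$ (which is compact in the weak topology since $\mathbb{I}$ is finite) together with the Feller property of the semigroup, applying a Krylov--Bogolyubov argument to the semigroup; equivalently, one extracts a subsequential weak limit of $\frac{1}{T}\int_0^T \nu_t\,dt$, which must be a fixed point of the deterministic flow.

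The main work lies in \emph{uniqueness and convergence}, which I would obtain simultaneously from the duality identity
\[
\mathcal{H}(\nu_t,\mathcal{G})=E_{\mathcal{G}}[\mathcal{H}(\nu_0,\mathcal{G}_t)]
\]
applied to the set-valued dual $\mathcal{G}_t$, which in this regime ($s_2=0$, $\gamma_2=0$) evolves only by level I selection, level I coalescence, mutation, and migration. Each rank at each occupied deme of $\mathcal{G}_t$ carries a subset of $\mathbb{I}$, and the mutation dynamics on a single rank is an irreducible Markov chain on the non-empty subsets of $\mathbb{I}$ (by the positivity assumption $m_{ij}>0$), with a unique invariant distribution $\pi^*$. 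The key claim is then that, conditional on the skeleton of selection/coalescence/migration events, the subset-assignments at each rank relax independently to $\pi^*$ as $t\to\infty$. Inserting this into the duality identity makes $\lim_{t\to\infty}\mathcal{H}(\nu_t,\mathcal{G})$ exist and be independent of $\mu_0$; since functions of the form $\mathcal{H}(\cdot,\mathcal{G})$ are measure-determining on $\mathcal{P}(\mathcal{P}(\mathbb{I}))$, this forces every equilibrium to agree with this common limit and simultaneously yields convergence $\nu_t\to \nu_{eq}$.

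The principal obstacle is controlling the growth of $\mathcal{G}_t$: level I selection at rate $s_1$ produces new ranks, migration at rate $c$ produces new demes, and one must justify interchanging the large-$t$ limit with the expectation. My plan is to dominate the number of ranks at each deme by a Crump--Mode--Jagers branching envelope in the spirit of the proof of Theorem \ref{T.7} (birth rate $s_1$, quadratic death rate $\gamma_1$ from coalescence, migration moving mass to fresh demes), giving at most polynomial growth in expectation at any finite time; coupled with the uniform geometric mixing rate $\kappa>0$ of the mutation semigroup on a single rank, a rank created at time $s\leq t$ is coupled to $\pi^*$ up to an error of order $e^{-\kappa(t-s)}$. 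Integrating this coupling error over the random times at which ranks are created, and using $\mathcal{H}\in[0,1]$ for uniform integrability, yields the required convergence quantitatively and closes the argument, recovering the result of \cite{DG-14}.
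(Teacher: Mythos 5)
The paper's own ``proof'' of Theorem \ref{T.ergodic} is only a citation to \cite{DG-14} (Theorems 12 and 14), so you are really trying to reconstruct that argument, and the reconstruction has a genuine gap at its core. Your mechanism for convergence is that the subset carried by each rank of the dual evolves as an irreducible Markov chain on the non-empty subsets of $\mathbb{I}$ and relaxes geometrically to an invariant law $\pi^*$. But the set-valued mutation dynamics defined in Section 3 has transition rates $A\to A\cup\{\ell\}$ at rate $\sum_{k\in A}m_{\ell,k}$ and $A\to A\setminus\{\ell\}$ at rate $\sum_{k\in A^c}m_{\ell,k}$; both rates vanish at $A=\emptyset$ and $A=\mathbb{I}$ respectively, so the per-rank chain has \emph{two absorbing states} and is transient on the proper non-empty subsets. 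There is no mixing toward an interior $\pi^*$; the actual mechanism in \cite{DG-14} is \emph{resolution by absorption} (a rank hitting $\mathbb{I}$ makes its factor trivial, a rank hitting $\emptyset$ annihilates its whole summand), and the limiting value of $\mathcal{H}(\nu_0,\mathcal{G}_t)$ is an absorption probability, not an integral against a product of mixed laws. Your picture would in any case leave a residual dependence on $\mu_0$ through $\pi^*$-integrals, which is inconsistent with uniqueness of $\nu_{eq}$.

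The quantitative closing step also does not close. With $s_1>0$ the number of ranks in the dual is a supercritical CMJ branching process -- the paper itself uses exactly this in the proof of Theorem \ref{T.7} -- so it grows like $e^{\alpha t}$ with Malthusian parameter $\alpha>0$, not polynomially. Summing your per-rank coupling error over birth times gives an expression of order $\int_0^t e^{\alpha s}e^{-\kappa(t-s)}\,ds \asymp e^{\alpha t}$, which does not tend to zero: ranks created shortly before time $t$ have had no time to relax, and they are the overwhelming majority. Controlling this competition between selection-driven growth and mutation-driven resolution is precisely the hard content of the \cite{DG-14} ergodic theorem, and your sketch does not supply it. A smaller issue: the Ces\`aro-average trick for existence of an equilibrium is only valid for linear (Markov) semigroups; for the nonlinear McKean--Vlasov flow $\nu_0\mapsto\nu_t$ a subsequential Ces\`aro limit need not be a fixed point (Krylov--Bogolyubov would produce an invariant element of $\mathcal{P}(\mathcal{P}(\mathcal{P}(\mathbb{I})))$, not of $\mathcal{P}(\mathcal{P}(\mathbb{I}))$). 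That point is harmless if convergence of $\nu_t$ itself is established, but the convergence argument is the part that is missing.
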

\begin{proof} See \cite{DG-14} Theorem 12, Theorem 14 for the case $s_2=0$. The extension to
include the case $s_2>0$ follows along the same lines.
\end{proof}

 We now consider the extension of the ergodicity result to
the case in which $\gamma_2>0$.

\begin{theorem}\label{T.ergodic2} {\em Ergodicity of two level Fleming-Viot systems.} Assume  that
$\gamma_2 > 0$ and $s_2\geq 0$
 and  positive mutation rates on $\mathbb{I}$. Then the law of the two level
Fleming-Viot process converges to a unique equilibrium
$P_{eq}\in\mathcal{P}(\mathcal{P}(\mathcal{P}(\mathbb{I})))$.\\
\end{theorem}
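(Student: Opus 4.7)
My plan is to split the argument into existence and uniqueness of $P_{eq}$, handling the second step through the set-valued dual of Theorem~\ref{T.3}. Existence is essentially automatic: since $\mathbb{I}$ is finite, the state space $\mathcal{P}(\mathcal{P}(\mathbb{I}))$ is compact, so the family $\{\mathcal{L}(\Xi_t)\}_{t\geq 0}$ is trivially tight in $\mathcal{P}(\mathcal{P}(\mathcal{P}(\mathbb{I})))$, and a Krylov--Bogolyubov Ces\`aro averaging of $\mathcal{L}(\Xi_t)$ yields at least one invariant probability law for the Markov semigroup associated with $(G_2,D_2)$. For uniqueness I invoke the dual identity (\ref{drep2}): for every $\mathcal{G}_0\in\sf{I}^{2*}$,
\begin{equation*}
E_{\Xi_0}[\mathcal{H}(\Xi_t,\mathcal{G}_0)]=E_{\mathcal{G}_0}[\mathcal{H}(\Xi_0,\mathcal{G}^2_t)],
\end{equation*}
and the algebra generated by $\{\mathcal{H}(\cdot,\mathcal{G}_0):\mathcal{G}_0\in \sf{I}^{2*}\}$ is probability-measure-determining on $\mathcal{P}(\mathcal{P}(\mathbb{I}))$ (as used already in the proof of Theorem~\ref{T.3}). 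It thus suffices to show that for each fixed $\mathcal{G}_0$ the right-hand side converges as $t\to\infty$ to a limit independent of $\Xi_0$.

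The decisive observation is that, under the assumption of positive irreducible mutation rates on $\mathbb{I}$, the mutation component of the dual drives each rank label $A_{ij}$ through the finite state space $2^{\mathbb{I}}$ whose only absorbing states are $\emptyset$ and $\mathbb{I}$: from any nontrivial $A$, an addition $A\to A\cup\{\ell\}$ occurs at strictly positive rate (because $A\neq\emptyset$) and a deletion $A\to A\setminus\{\ell\}$ occurs at strictly positive rate (because $A\neq\mathbb{I}$). Once every label $A_{ij}$ of a dual configuration lies in $\{\emptyset,\mathbb{I}\}$, the value $\mathcal{H}(\Xi_0,\mathcal{G})$ is simply $0$ (if some factor is $\emptyset$) or $1$ (if every factor is $\mathbb{I}$), which depends only on $\mathcal{G}$, not on $\Xi_0$. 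Building on this, I would prove geometric ergodicity of $\{\mathcal{G}^2_t\}$ to a unique stationary law $\pi_\infty$ supported on such ``trivial'' configurations via a Foster--Lyapunov function of the form
\begin{equation*}
V(\mathcal{G})=\alpha\cdot \#\{\text{occupied demes}\}+\beta\sum_i n_i+\delta\cdot\#\{(i,j):A_{ij}\notin\{\emptyset,\mathbb{I}\}\},
\end{equation*}
where $\alpha,\beta,\delta>0$. The quadratic level~II coalescence rate $\tfrac{\gamma_2}{2}k(k-1)$ dominates the linear per-deme birth rates coming from migration and level~II selection (controlling the first term), quadratic level~I coalescence dominates linear per-rank level~I selection (controlling the second), and the positive mutation drift into $\{\emptyset,\mathbb{I}\}$ controls the third. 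A standard estimate $G^{dual}V\leq -\varepsilon V+C$ would then yield $\mathcal{G}^2_t\Rightarrow\pi_\infty$ geometrically, and combining with the duality identity gives $\mathcal{L}(\Xi_t)\Rightarrow P_{eq}$ where $\int\mathcal{H}(\Xi,\mathcal{G}_0)P_{eq}(d\Xi)=\pi_\infty(\{\text{all }A_{ij}=\mathbb{I}\})$, a quantity independent of $\Xi_0$ that characterizes $P_{eq}$ uniquely.

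The main obstacle is the third term of $V$: each level~I or level~II selection event can re-introduce nontrivial labels at ranks where mutation absorption had already occurred, so the Lyapunov calculation must carefully balance selection-driven creation of nontrivial labels against mutation-driven absorption. I expect the bound to go through because, conditional on a uniform bound on the number of ranks per deme (itself supplied by the quadratic coalescence term), the mutation generator acting coordinatewise on $2^{\mathbb{I}}$ has a strictly positive spectral gap while selection acts only at linear rate $s_1$ per rank; if necessary, one can treat the mutation dynamics as a fast subsystem to which the remaining mechanisms are a bounded perturbation, or, in the spirit of the function-valued Foster--Lyapunov arguments of Dawson--Greven \cite{DG-14}, first prove that the coalescent component of $\mathcal{G}^2_t$ is positive recurrent and then handle the coordinatewise label chains by comparison with the pure mutation chain on $2^{\mathbb{I}}$.
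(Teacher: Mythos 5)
Your proposal rests on the same two pillars as the paper's argument: the duality relation (\ref{drep2}) together with the measure-determining property of $\{\mathcal{H}(\cdot,\mathcal{G}_0)\}$, and the observation that positive mutation rates drive each label $A_{ij}$ into the absorbing pair $\{\emptyset,\mathbb{I}\}$, at which point $\mathcal{H}(\Xi_0,\mathcal{G})\in\{0,1\}$ no longer depends on $\Xi_0$. Where you diverge is in how almost-sure absorption of the dual is established. The paper's route is soft: the quadratic level~II coalescence rate makes the single-occupied-deme configurations recurrent for $\mathcal{G}^2_t$, from each visit to that set the absorbing state $\mathbb{I}^{\N}$ (or $\emptyset$) is hit with probability bounded below (this is where the positive mutation rates enter), and a conditional Borel--Cantelli argument gives absorption with probability one --- no drift inequality and no rate of convergence. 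Your Foster--Lyapunov route would buy geometric ergodicity, which is strictly more, but the specific function $V$ you write down is problematic: the number of ranks per deme under the dual dynamics is only positive recurrent (a birth--death chain with linear selection births and quadratic coalescence deaths), not uniformly bounded, so the step ``conditional on a uniform bound on the number of ranks'' is not available, and the creation rate of nontrivial labels by selection is proportional to the (unbounded) rank count while the mutation absorption rate is proportional only to the current number of nontrivial labels; making $G^{dual}V\leq-\varepsilon V+C$ close requires a more careful weighting than the linear combination you propose. Your stated fallback --- first prove positive recurrence of the coalescent skeleton and then compare the label coordinates with the pure mutation chain on $2^{\mathbb{I}}$ --- is in substance exactly the paper's argument, so I would recommend leading with that and treating the Lyapunov estimate as an optional refinement. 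Your Krylov--Bogolyubov existence step is correct but unnecessary: once the dual expectations converge to limits independent of $\Xi_0$, those limits are the moments of the equilibrium, so existence and uniqueness come out of the duality simultaneously. Finally, note that both your argument and the paper's lean implicitly on control of the level~I rank counts; if $\gamma_1=0$ this control must come from mutation absorption alone, a point neither treatment makes explicit.
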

\begin{proof} We adapt the proof of \cite{DG-14}. Due to level II coalescence the number of sites occupied by
the dual can be reduced to 1 and this event is recurrent.  Note that  the equilibrium mean measure
of a subset of $\mathbb{I}$ is given by the probability that the dual process starting at the
indicator of the subset hits the absorbing point $\mathbb{I}^{\N}$ but this occurs with positive
probability at each time the event occurs. Therefore an absorbing point is reached with probability
1.
\end{proof}

We now consider the case in which the mutation Markov chain has two or more ergodic classes.  In
this case the system can be non-ergodic. If $\gamma_1>0$, then eventually the population will be
concentrated on one of the ergodic classes and the problem is to compute the fixation
probabilities.

 \beT{}\label{T.fixation} {\em Fixation probabilities}
\begin{description}
\item [(a)]{\em  Single deme.} Assume that the initial configuration is iid $\mu_0$ and that the mutation chain has two or more ergodic classes $\mathbb{I}_1,\dots,\mathbb{I}_\ell$, $\gamma_1>0$,
and $s_1\geq 0$.  Then the population is ultimately fixed on one of the classes and the probability
that it is in class $k$ is given by the equilibrium measure of the dual chain started with $\mathcal{G}_0 = \mathbb{I}_k$ integrated with respect to the initial measure $\mu_0$
.
\item[(b)]  {\em Two level Fleming-Viot systems.} Assume that  $\gamma_1>0$, $\gamma_2 >0$, $V_1,V_2>0$ and that the mutation Markov chain has two or
more ergodic classes. Then there is ultimate fixation of a single ergodic class and  the law of the
two level system converges to a random mixture of pure equilibrium single class populations.

\end{description}

\end{theorem}
\begin{proof} (a) We again use the dual representation. To verify ultimate fixation take $\mathcal{G}_0 =\prod_{i=1}^{\ell^\prime} \mathbb{I}_i\subset \mathbb{I}^\ell$, $2\leq\ell^\prime\leq\ell$. Then due to the quadratic rate coalescence, with probability 1, $\mathcal{G}_\tau =\emptyset$ at some random time $\tau$ and this is a trap so that $\mathcal{G}_t =\emptyset$ for large $t$. To compute the probability that ergodic class  $k$ is chosen, start the dual with $\mathcal{G}_0 = \mathbb{I}_k$.  The dual process jumps are within class mutation jumps, selection jumps
and coalescence jumps. Due to coalescence the state $\mathcal{G} = \mathbb{I}_k$ is recurrent and
 therefore the dual is positive recurrent and converges to equilibrium as $t\to\infty$. The
fixation  probability (limiting probability that ergodic class $k$ is chosen) is then obtained by integrating with respect to the initial measure
$\mu_0^\otimes$.

\noi (b) We again use the dual process. Due to level II coalescence there will eventually be an
equilibrium distribution of the number of sites occupied by the dual and in fact the single site
situation occurs infinitely often.  Then each time there is non-zero probability that one of the
classes will be eliminated and the within class mutation will hit an absorbing point as in (a).

\end{proof}

\begin{example} Assume that $\mathbb{I}=\{1,2,3\}$, no mutation, $\gamma_1 >0,\;\gamma_2>0$ and
$\nu_0=\frac{1}{3}\delta_{\delta_{1}}+
\frac{1}{3}\delta_{\delta_{2}}+\frac{1}{3}\delta_{\delta_{3}}$. We use the set-valued dual
$\mathcal{G}_t$.  If  $\mathcal{G}_0=\{i\}_1\otimes_2\{j\}_2$,  then by level II coalescence
followed by level I coalescence,  $\mathcal{G}_\tau = \emptyset$ at some finite random time $\tau$
if $i\ne j$ and therefore only one type survives. We also note that the dual process
$\mathcal{G}_t$ starting from $\mathcal{G}_0=\{i\}_1$ is positive recurrent with $\{i\}_1$ as a
renewal point - this is due to the levels I and II quadratic rate  coalescence events  (in contrast
to the linear birth rates due to selection events).

Then ``level II fixation'' occurs, namely,
\[ \nu_t\to \delta_{\delta_{i}} \quad \text{with probability } p_i\] and the fixation probabilities $p_i$ are obtained
by integrating the equilibrium dual $\mathcal{G}_{\rm{eq}}(i)$ starting from
$\mathcal{G}_0=\{i\}_1$, that is,
\[ p_i= \nu_0^{\otimes}(\mathcal{G}_{\rm{eq}}(i)).\]

\end{example}

\subsection{Examples of multilevel effects}

The class of multitype multilevel population systems with mutation and selection is extremely rich
and can exhibit many complex dynamical behaviors.  We do not discuss this in detail but now given
an illustrative example of a simple effect of this type

\subsubsection
{A model of  cooperation.}\label{coop}
\medskip

\noindent Consider the 3 type case $\mathbb{I}=\{1,2,3\}$  with level I fitness function
\[ V_1(1)=v_1,\quad V_1(2)=V_1(3)=0,\]
and level II fitness function
\[  V_2(\mu)=v_2\mu(2)\mu(3),\]
with $v_1,v_2>0$.  This models a cooperative interaction of two types $2$ and $3$  that endow a
deme containing them  with a positive advantage.

\noindent We consider the emergence
 of demes having population distributions $(p_1,p_2,p_3)$ when
$\gamma_2=0$ satisfying \begin{equation}\label{threshold}  v_1p_1< v_2p_2p_3.\end{equation}

%\end{example}

\begin{theorem}{}\label{T.cooperation}
Assume that  $c>0$,  $\gamma_1>0$, rare mutation from type 1 to types 2,3 occurs at rates
$m_{12}=m_{13}=\ve >0$  and $m_{23},m_{32}>0$, $m_{21}=m_{31}=0$.

(a)  If $v_2=0$, then for sufficiently large $v_1>0$ we can have long time survival of type $1$,
that is an equilibrium with positive mean proportion of type $1$.

(b) There exists a critical value $v_2^*$ at which emergence of demes having threshold values
satisfying (\ref{threshold}) occurs for $v_2>v_2^*$, type 1 becomes extinct and types $2,3$ go to
equilibrium.

\end{theorem}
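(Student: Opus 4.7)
The plan is to combine the set-valued dual $\mathcal{G}_t$ of Theorem~\ref{T.3} with the competing-branching-processes analysis used in the proof of Theorem~\ref{T.7}(c). Throughout I exploit the first-moment identities $\int\mu(i)\,\nu_t(d\mu) = E_{\mathcal{G}_0=\{i\}}[\mathcal{H}(\nu_0,\mathcal{G}_t)]$ for $i=1,2,3$, together with Proposition~\ref{P.4} (in the regime $\gamma_2=0$ fixed by the preceding discussion) to reduce the analysis to a deterministic flow $\nu_t$ in $\mathcal{P}(\mathcal{P}(\mathbb{I}))$.

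For part~(a), with $v_2=0$ the dual started at $\{1\}$ has only level I transitions: selection $\{1\}\to\{1\}\cup(\{2,3\}\otimes_1\{1\})$ at rate $v_1$; mutation $\{1\}\to\varnothing$ at rate $m_{12}+m_{13}=2\varepsilon$; same-deme coalescence at quadratic rate $\gamma_1$; and migration at rate $c$ to a fresh deme.  I will lower-bound $\mathcal{H}(\nu_0,\mathcal{G}_t)$ by tracking only the sub-terms carrying a surviving $\{1\}$-factor at a fresh (coalescence-free) deme, obtained from the second summand of each selection event followed by migration.  This yields a dominated birth--death branching process on $\{1\}$-factors with birth rate of order $v_1\cdot\frac{2c}{2c+\gamma_1}$ (the ``migration-before-coalescence'' bookkeeping of step~(1) of the proof of Theorem~\ref{T.7}) and death rate $2\varepsilon$ per factor.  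For $v_1$ sufficiently large this branching process is supercritical, its Malthusian parameter is strictly positive, and with positive probability the number of $\{1\}$-factors does not go extinct.  Each surviving factor contributes a uniform positive amount to $\int\mu(1)\,\nu_t(d\mu)$, giving $\liminf_{t\to\infty}\int\mu(1)\,\nu_t(d\mu)>0$.

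For part~(b), both mechanisms are active; the level II transition of the dual at rate $v_2$ reads
\[\{1\}\ \longrightarrow\ \bigl(\{2\}\otimes_1\{3\}\otimes_1\{1\}\bigr)_k\ \cup\ \bigl(B_k^c\otimes_2\tilde{\mathcal{G}}\bigr),\]
with $B=\{2\}\otimes_1\{3\}$ and $\tilde{\mathcal{G}}$ the shifted residual.  I will replay the competing-CMJ strategy of Theorem~\ref{T.7}(c): level I selection produces extra $\{1\}$-factors at effective rate $\alpha_1\sim v_1\cdot\frac{2c}{2c+\gamma_1}$, while a level II event whose fresh $(\{2\},\{3\})$ pair coalesces at the same deme before either factor migrates away produces a new ``$(2,3)$-type'' deme lineage at effective rate $\alpha_2\sim v_2\cdot\frac{\gamma_1}{2c+\gamma_1}$ (``coalescence-before-migration'').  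Each mechanism defines a CMJ branching process whose Malthusian parameter is continuous and strictly increasing in its selection parameter, so $\alpha_2(v_2^{*})=\alpha_1(v_1)$ singles out a unique critical value $v_2^{*}$.  For $v_2>v_2^{*}$ the level II CMJ dominates: demes carrying the pattern $(\{2\},\{3\})$ emerge and satisfy the threshold~(\ref{threshold}), hence $\int\mu(1)\,\nu_t(d\mu)\to 0$; the irreducibility of the $2\leftrightarrow 3$ mutation chain (with $m_{23},m_{32}>0$) then drives $(\mu(2),\mu(3))$ to its unique equilibrium on the $\{2,3\}$-simplex via the argument underlying Theorem~\ref{T.ergodic}.

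The main obstacle is the bookkeeping in part~(b) induced by the quadratic fitness $\mu(2)\mu(3)$.  Unlike the linear fitness of Theorem~\ref{T.7}, each level II selection event injects a \emph{pair} of singleton ranks $\{2\}$ and $\{3\}$ at deme $k$; these are killed independently by the reverse mutations $1\to 2$ and $1\to 3$ (each at rate $\varepsilon$) and are annihilated jointly whenever they coalesce, via $\{2\}\cap\{3\}=\varnothing$.  The couplings of Theorem~\ref{T.7}(c) must therefore be refined to track the joint survival of such pairs and to verify the continuity and strict monotonicity of $\alpha_2(v_2)$; once these structural estimates are in place, existence of $v_2^{*}$ and the dichotomy of long-time behaviour follow by monotone comparison and the intermediate value theorem.
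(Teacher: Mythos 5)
Your overall strategy is the same as the paper's: reduce everything to first moments via the set-valued dual, and run the competing-branching-process (CMJ / birth--death) dichotomy from the proof of Theorem~\ref{T.7}(c), with part~(a) handled by the supercriticality of a single birth--death process on $\{1\}$-factors whose deaths come from the mutation rate $2\varepsilon$. Part~(a) and the general architecture of part~(b) match the paper (which for (a) simply invokes \cite{DG-14}, Corollary 10.1, and for (b) sets up exactly the two competing branching mechanisms with rates proportional to $v_1$ and $v_2$).

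However, there is a concrete error in your part~(b): you have the productive level~II pathway backwards. For the quadratic fitness $v_2\,\mu(2)\mu(3)$ the level~II selection event injects the fresh pair $\{2\}\otimes_1\{3\}$ as two new ranks \emph{at the same deme}, and same-deme coalescence of these two ranks gives $\{2\}\cap\{3\}=\varnothing$, which \emph{annihilates} the summand rather than founding a new deme lineage. So ``coalescence-before-migration'' is the unproductive outcome here, in contrast to Theorem~\ref{T.7}(c) where the level~II fitness is linear and coalescence-before-migration is what makes the event permanent. The paper states this explicitly: to produce a permanent summand one needs \emph{two} migration or mutation transitions to occur before the new pair coalesces. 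Consequently the effective level~II birth rate is $v_2$ times a factor that is \emph{increasing} in $c$ and decreasing in $\gamma_1$, not $v_2\cdot\frac{\gamma_1}{2c+\gamma_1}$ as you wrote; with your rate the qualitative dependence of $v_2^*$ on $c$ and $\gamma_1$ comes out reversed. You do identify the annihilation mechanism correctly in your closing paragraph, but the main argument still runs on the wrong rate, so the two halves of your write-up contradict each other; the fix is to replace $\alpha_2$ by $v_2$ times the probability that both fresh ranks escape (by migration to new demes or by mutation) before coalescing, and then the intermediate-value/monotonicity argument for $v_2^*$ goes through as you intend. A secondary point: the deaths in the two competing processes should also be cross-terms (the paper kills level~I factors by mutation \emph{and} level~II selection, and kills level~II factors by level~I selection), which your comparison does not fully set up.
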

\begin{proof} (a)  This has been proved for the case of two types in \cite{DG-14}, Corollary 10.1. The proof of (a) follows
along the same lines and elements of it are used in the proof of (b). We will outline the main
steps. We use the dual started with $\mathcal{G}_0=\{1\}\otimes \mathbb{I}^{\otimes\mathbb{N}}$.
The corresponding indicator function representation starts with  $(100)\otimes(111)^{\otimes
\mathbb{N}}$.    Then selection followed by migration leads to new summands. Mutation has the
effect of eliminating summands so that together we have a birth and death process. There are two
possible outcomes. First the $(100)$ can be a recurrent point and eventually will eventually change
to $(000)$ with probability 1.  The second possibility is that the birth and death process is
supercritical and in this case the type $1$ is positive for the invariant measure.

(b)  We consider $\lim_{t\to\infty}\int \mu(2\cup 3)\nu_t(d\mu)$ and $\lim_{t\to\infty}\int
\mu(1)\nu_t(d\mu)$.  Note that (using indicator functions of subsets of $\mathbb{I}$) the level II
selection transitions gives \bean{}&&(011)\to\\&& (010)\otimes_1(001)\otimes_1 (011)\\&&+
[(100)\otimes_1 (111)+(010)\otimes_1 (110)+(001)\otimes_1 (111)] \otimes_2 (011)\eean In order to
produce a permanent summand we then require either two migration or mutation transitions before
coalescence of the first two terms but this occurs with positive rate so that this can have an
important effect for large $v_2$.  This can then lead to

 \bean{}&&(011)\to\\&&
(010)\otimes_1(001)\otimes_1 (011)\\&&+ [(100)\otimes_1 (111)+(010)\otimes_1 (110)+(001)\otimes_1
(111)] \otimes_2 (011)\\&& \dots \to (011)+(100)\otimes_2 (011)\eean

Level I selection leads to

\[ (011)\to (011)\otimes_1 (011)\]

If we then have a level II selection

\bean{} &&(011)\to (011)\otimes_1 (011)\\&&\to (010)\otimes_1(001)\otimes_1 (011)\otimes_1
(011)\\&& + [(100)\otimes_1 (111)+(010)\otimes_1 (110)+(001)\otimes_1 (111)] \otimes_2
(011)\otimes_1(011)\\&&\to \dots (011)\otimes_1 (011)+(011)\otimes_1(100)\otimes_2(011)\eean
\bigskip

We then have competing branching mechanisms one whose rate is proportional to $v_1$ and one whose
rate is proportional to $v_2$.  We can construct birth and death processes where the deaths in the
level I process (birth rate prop. to $v_1$) are caused by mutation and level II selection. Deaths
in the level II process (birth rate prop  to $v_2$)are caused by level I selection.  Recall that
type 1 can only survive if the level birth and death process is supercritical (cf. DG, p. 743). We
again have a dichotomy involving the critical behaviour of two competing branching processes and
the result follows as in the proof of Theorem \ref{T.7}
\end{proof}

\subsubsection{Emergence of mutualistic types}\label{mutual}

There are many different mechanisms involving multilevel selection that can influence the overall
population structure. For example, multilevel selection can make possible the survival of a trait
that then leads to the emergence of another trait that has a mutually beneficial effect which then
gives the first trait higher (inclusive) individual fitness and the pair survives locally.

To illustrate this consider the following modification of the model of subsubsection \ref{coop}.
Let
\[ V_1(1)=v_1,\quad V_1(2,\mu)= v_{M}\cdot \mu(3), \;V_1(3,\mu)=v_{M}\cdot \mu(2),\] and level II fitness function
\[  V_2(\mu)=v_2(\mu(2)+\mu(3)).\]

\begin{theorem}{}\label{T.mutualism}

Assume that  $c\geq 0$,  $\gamma_1\geq 0$, rare mutation from type 1 to types 2,3 occurs at rates
$m_{12}=m_{13}=\ve \geq 0$  very small, and with $v_1,v_2, v_{M}\geq 0$. Denote by $x_i(t)$   the
expected proportion of type $i$ at time $t$.

\noindent (a)   Consider the deterministic case with $\gamma _1=m_{12}=m_{13}=0$, $v_1=1$ and
$v_2=0$ and for simplicity assume that $x_2(0)=x_3(0)>0$.  Then if \[ v_M<\frac{2}{1-x_1(0)},\]
then
\[ x_1(t)\to 1\] as $t\to\infty$.

\noindent (b)  If
\[ v_M >\frac{2}{1-x_1(0)},\]
then $ x_1(t)\to 0$ as $t\to\infty$.  In the case $ v_M=\frac{2}{1-x_1(0)}$ there is an unstable
equilibrium.

\noindent (c) Consider the two level system.  Assume conditions on $\nu_0$, namely on the density
at $1$ of the total mass of types $2$ and $3$ (as in Theorem \ref{T.6}) and  that $v_M>2$. Then for
sufficiently large $v_2$ type $1$ becomes extinct and the mutualistic types dominate. Moreover they
continue to dominate even if the level II selection ends, that is, $v_2(t)=1_{[0,T]}(t)\,v_2$ for
sufficiently large $T$.

\noindent
\noindent In this case we have a two stage process\\
\noindent Stage 1: Ignoring the mutualistic effect, survival  and equilibria of altruistic types $2,3$ by level II selection.\\
\noindent Stage 2: Takeover  by the  mutualistic pair $2,3$.\\

\end{theorem}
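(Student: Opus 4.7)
The plan is to treat (a) and (b) by reducing the within-deme dynamics to a scalar ODE via the $2\leftrightarrow 3$ symmetry, and then in (c) to combine that phase portrait with the set-valued dual analysis of Theorems \ref{T.6} and \ref{T.7}(c) in a two-stage argument: level II selection first concentrates $\nu_t$-mass on demes with $\mu(2)+\mu(3)$ close to $1$ (Stage 1), and the mutualistic level I dynamics then drives $\mu(1)$ to zero on each such deme (Stage 2).

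For (a) and (b), with $\gamma_1 = v_2 = m_{12} = m_{13} = 0$ the single-deme Fleming--Viot generator reduces to the state-dependent replicator ODE
\[
\dot x_i = x_i\bigl(V_1(i,\mathbf{x}) - \bar V(\mathbf{x})\bigr), \qquad \bar V(\mathbf{x}) = x_1 + 2 v_M x_2 x_3.
\]
The symmetry $x_2(0) = x_3(0)$ combined with uniqueness gives $x_2(t) \equiv x_3(t) =: y(t)$ and $x_1 = 1 - 2y$, collapsing the system to
\[
\dot y = y(1-2y)(v_M y - 1),
\]
whose fixed points in $[0,1/2]$ are $0$, $1/2$, and, when $v_M \ge 2$, an interior point $1/v_M$. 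Since $y(0) = (1-x_1(0))/2$, the hypothesis $v_M < 2/(1-x_1(0))$ is exactly $y(0) < 1/v_M$, where $\dot y < 0$; hence $y \downarrow 0$ and $x_1 \uparrow 1$. The reverse inequality places $y(0)$ above $1/v_M$, giving $\dot y > 0$ and $y \uparrow 1/2$, so $x_1 \downarrow 0$. The equality case pins $y$ at the unstable interior fixed point.

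For (c), I proceed in two stages using the set-valued dual of Section \ref{duality}. \textbf{Stage 1 (level II sweep).} I track the $\nu_t$-mass on
\[
A_{\delta,\eta} := \{\mu : \mu(2)+\mu(3) > 1-\delta,\ \mu(2)\wedge\mu(3) > \eta\}.
\]
Level II selection with $V_2(\mu) = v_2(\mu(2)+\mu(3))$ gives, via the transition (\ref{22sel}), a supercritical branching mechanism on the dual favouring the altruistic pair, while the state-dependent level I selection from Remark \ref{R5} produces the opposing mechanism with effective rates controlled by $v_1$ (creation of type 1) and $v_M$ (mutualistic rearrangement among types $2,3$). Arguing as in the competing-branching-processes part of Theorem \ref{T.7}(c) combined with the density-at-the-boundary argument of Theorem \ref{T.6}(b), the density assumption on $\nu_0$ near $\mu(2)+\mu(3) = 1$ yields, for $v_2$ above a critical threshold, $\liminf_{t\to\infty} \nu_t(A_{\delta,\eta}) > 0$ for some small $\delta,\eta > 0$. \textbf{Stage 2 (mutualistic lock-in).} Pick $\delta$ so that $1-\delta > 2/v_M$, possible because $v_M > 2$; on $A_{\delta,\eta}$ one has $x_1 < \delta$ and $2 v_M x_2 x_3 \ge 2 v_M \eta^2$, so
\[
\dot x_1 = x_1(1 - x_1 - 2 v_M x_2 x_3) \le -c\, x_1
\]
for some $c = c(\delta,\eta, v_M) > 0$; hence the within-deme field points strictly into the face $\{x_1 = 0\}$, $A_{\delta,\eta}$ is forward-invariant under the level I dynamics alone, and type 1 decays exponentially within every trapped deme. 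Combining the two stages, $\nu_t$ concentrates on $\{\mu(1)=0\}$, and persistence after switching off $v_2$ at a large $T$ follows because by then the $\nu_t$-mass is already in the basin $A_{\delta,\eta}$, where Stage 2 no longer requires level II selection.

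The main obstacle is Stage 1: the Luo closed-form PDE solution (\ref{E.Luo2}) used in Theorem \ref{T.6} is unavailable in the three-type mutualistic setting, so the quantification of the critical $v_2^*$ must come directly from the competing-branching-mechanism bound of Theorem \ref{T.7}(c) applied to the state-dependent fitness of Remark \ref{R5}. A further technical point is upgrading moment estimates on $\int(\mu(2)+\mu(3))\,\nu_t(d\mu)$ to the measure-valued statement $\nu_t(A_{\delta,\eta})\ge \alpha > 0$; this should follow from a monotonicity/coupling argument exploiting the fact, noted in Subsection \ref{sss.sv}, that the set-valued dual transitions preserve the decomposition of the state into disjoint summands.
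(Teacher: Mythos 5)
Your proposal is correct and takes essentially the same route as the paper: for (a),(b) the symmetry reduction yields your $\dot y=y(1-2y)(v_My-1)$, which in the variable $x_1=1-2y$ is exactly the equation $\frac{dx_1}{dt}=x_1[(1-x_1)-\frac{v_M}{2}(1-x_1)^2]$ that the paper derives from the dual, and for (c) both arguments are the same two-stage scheme in which Theorem \ref{T.6} first pushes the mass of types $2,3$ above $2/v_M$ and the deterministic dynamics of part (b) then eliminate type $1$ and keep it eliminated after $v_2$ is switched off. One small slip: in your Stage 2, $x_2\wedge x_3>\eta$ and $x_2+x_3>1-\delta$ only give $x_2x_3>\eta(1-\delta-\eta)$, so the bound $\dot x_1\le -c\,x_1$ on $A_{\delta,\eta}$ requires $\eta$ of order at least $1/(2v_M)$, not an arbitrarily small $\eta$.
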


\begin{proof} (a,b)   Using the dual as above we can  obtain the following equations
by decomposing the first transitions of the dual into the different cases (recall (\ref{sdf})):
\[ \frac{dx_1(0)}{dt}= x_1(0)[(1-x_1(0))-\frac{v_m}{2}(1-x_1(0))^2].\]
The result follows by checking the sign of the derivative.

\noindent (c) By Theorem \ref{T.6}, in the two level system, under appropriate conditions on the
density of the total mass of types $2$ and $3$ at $1$, for sufficiently large $v_2$ there is an
equilibrium with mean mass of types $2,3$ greater than $\frac{2}{v_M}$, that is
$1-x_1(0)>\frac{2}{v_M}$, and therefore by part (b) type $1$ becomes extinct and the mutualistic
types dominate. Moreover they continue to dominate even if the level II selection ends.

\end{proof}

\begin{remark} Since the mutualistic pair  can then persist even in the absence of further level II selection in (c), this
illustrates the possible role of transient group selection in the long time genetic structure of a
population.  Without the role of level II selection, the simultaneous emergence of both types with
at least critical density would be an event of higher order rarity.
\end{remark}

\begin{remark}  We can also consider a random version of this.  In this case we assume that
$\nu_0=\delta_{(1,0,0)}$, $\gamma_1 >0$, $m_{12}=m_{13}=\varepsilon >0$.  Then again for
sufficiently large $v_M$ and $v_2$ type 1 dies out and the mutualistic types take over. The dual
analysis involves four classes of selection birth events corresponding to the level I fitness of
type 1, the mutualist fitnesses of types 2,3, the level II fitness of sites containing types 2 and
3 and level I coalescence.  This will be carried out elsewhere.
\end{remark}

\subsubsection{A three level system}

Consider a system with state space $\mathcal{P}(\mathcal{P}(\mathcal{P}(\mathbb{I})))$. For
example, this could model a system of competing regions in which regions contain competing towns
and each town contains a population with type space $\mathbb{I}$.  The relative fitness $V_3(\nu)$,
of a region (level III fitness) is assumed to depend on the distribution $\nu$ of the
characteristics of the towns it contains so that $V_3:(\mathcal{P}(\mathcal{P}(\mathbb{I})))\to
[0,1]$

\be{} V_3(\nu)=\prod_{k=1}^K \left[\int h_{k}(\mu_{k})\nu(d\mu_k)\right] \ee where $h_k$ is of the
form (\ref{alg2}), and convex combinations of functions of this form.  We also allow individuals to
migrate to a different region and even entire towns to move to a different region.  Then the
combined effect of three levels of selection can lead to complex behaviour which can be analyzed
using the set-valued dual with values in $((\mathcal{I})^
{\otimes\mathbb{N}})^{\otimes\mathbb{N}}$.

%\begin{example}  Consider the case where there are three ergodic classes for the mutation mechanism
%and the deme fitness is positive based on the presence of two of the classes.  This demonstrates
%that the combined effect of the two levels of selection can lead to complex population structures.

%\end{example}

%Traits that determine  group formation (c.f. G.F.R. Ellis, e.g. based on genealogical distance) or
%for rate of spatial migration
%   $c>0$.

\subsection{The study of more complex multilevel interactions - set-valued Monte Carlo approximation}

For  more complex multilevel interactions it is natural to consider simulations since closed form
solutions cannot be expected. However the numerical simulation of systems of Fleming-Viot processes
involves the solutions of systems of nonlinear stochastic partial differential equations with
degenerate boundary behaviour.  The numerical simulation of such  systems of stochastic
differential equations  is difficult.   On the other hand the dual formulation involves only the
simulation of continuous time Markov chains with discrete states which are easy to simulate by
Monte Carlo methods. In particular one can compute means and covariances directly by simulating the
dual process and determining the empirical distribution of the outcomes, namely, of the absorption
probabilities (for equilibria) or equilibrium probabilities for fixation probabilities.

\subsection{ Extensions}

The study of multilevel evolutionary systems arises in evolutionary biology, ecology, virology,
sociology, economics, etc.  These give rise to a wide range of mechanisms and interactions. Some of
these can be modeled by extensions of the models and methods described above.  For example, the
models and the set-valued dual representations can be extended to systems with countably many
types, recombination and horizontal gene transfer, higher level interactions, random environments,
multiple species, and measure-valued processes with jumps leading to duals with multiple
coalescence.

%\section{References}


\begin{thebibliography}{99}




\bibitem{A-83a} E. Akin (1983). Hopf bifurcation in the two locus genetic model, Memoirs AMS 284.

\bibitem{A-82} K. Aoki (1982). A condition for group selection to
prevail over counteracting individual selection, Evolution 36,
832-842.

\bibitem{A-83} K. Aoki (1983). A quantitative genetic model of reciprocal
altruism: a condtion for kin or group selection to prevail, Proc.
NAS 80, 4065-4068.

\bibitem{} A.J. Arnold ans K. Fristrup (1982). The theory of evolution by natural selection: a
hierarchical expansion, Paleobiology 8, 113-129.
\bibitem{} S.R. Athreya and J.M. Swart (2005),
\emph{Branching-coalescing systems}, Prob. Theory Relat. Fields
131, No. 3, 376-414.

\bibitem {} T.C. Bergstrom (2002). Evolution of social behaviour: individual and group selection, J.
Economic Perspectives 1, 67-88.



\bibitem{} P. Bijma, W.M. Muir, J.A.M. Van Arendonk (2007), Multilevel+  selection 1: Quantitative genetics of inheritance
and response to selection,  Genetics 175, 277-288.

\bibitem{} P. Bijma, W.M. Muir, E.D. Ellen, J.B. Wolf, J.A.M. Van Arendonk (2007), Multilevel  selection 2:
 Estimating the genetic parameters determining inheritance and response to selection,,  Genetics 175, 289-299.

\bibitem{BR} R. Boyd
 and P.J. Richerson (2010). Transmission coupling mechanisms: cultural group selection, The Royal Soc. Phil. Trans. B,
365. 3787-3795.

\bibitem{} G. Boza and S. Sz\'amad\'o (2010). Beneficial laggards: multilevel selection, cooperative polymorphism and division of labour in Threshold
Public Good Games, Evolutionary Biology 10:336.

\bibitem{BB-84} R.M. Brandon and R.M. Burian (1984). Genes, organisms and populations, MIT Press.
\bibitem{} J. Courtean and S. Lessard (2004). Sex ratio evolution
through group selection using diffusion approximation, J. Math.
Biol. 48, 340-356.

\bibitem{} J. Cremer, A. Melbinger and E. Frey (2012). Growth dynamics and the evolution of cooperation in microbial
populations, Nature Scientific Reports 2:281: DOI 10.1038/srep00281


\bibitem{}  J.F. Crow and K. Aoki (1984). Group selection for a polygenic trait: estimating the degree
of population subdivision, PNAS 81, 6073-6077.



\bibitem{D93} D. A. Dawson (1993), {\it Measure-valued Markov
Processes.} In: \'Ecole d'\'Et\'e de Probabilit\'es de Saint Flour
XXI, Lecture Notes in Mathematics {\bf 1541}, 1-261,
Springer-Verlag.


\bibitem{D-97} D.A. Dawson (1997). Hierarchical and mean-field stepping stone models, in Progress
in Population Genetics and Human Evolution, eds. P. Donnelly and S. Tavar\'e, Springer, IMA Volume
in Mathematics and its Applications vol. 87.

\bibitem{D10}  D.A. Dawson (2010). Introductory lectures on stochastic population systems, Technical Report Series of the Laboratory
for Research  in Statistics  and Probability, Carleton University and University of Ottawa,  No.
451.

\bibitem{DH82}   D.A. Dawson and K.J. Hochberg (1982). Wandering random measures in the Fleming-Viot model,
\textit{Annals of Probability} \textbf{10} (1982), 554-580.



\bibitem{DH-91} D.A. Dawson and K.J. Hochberg (1991). A multilevel branching model, \textit{%
Advances in Applied Probability} 23(1991), 701-715.



\bibitem{DHV-96} D.A. Dawson, K.J. Hochberg and V. Vinogradov (1996). High-density
limits of hierarchically structured branching-diffusing populations, \textit{%
Stochastic Processes and their Applications}, 62, 191-222.

\bibitem{DGa} D.A. Dawson and J. G\"{a}rtner (1998).
Analytic aspects of multilevel large
deviations, in \textit{Asymptotic Methods in Probability and Statistics }%
(ed. B. Szyszkowicz), Elsevier, Amsterdam, 401-440.



\bibitem{DG-93} D.A. Dawson and A. Greven (1993) Hierarchically interacting Fleming-Viot
processes with selection and mutation: multiple space-time scale analysis and
quasi equilibria, Electronic J. of Prob., vol. 4, paper 4, pages 1-81.

\bibitem{DGW-04} D.A. Dawson, L.G. Gorostiza and A. Wakolbinger (2004). Hierarchical equilibria of branching
populations, Electronic Journal of Probability Volume 9, paper 12, 2004, 316-381.

\bibitem{DG-14} D.A. Dawson and A. Greven (2014). Spatial Fleming-Viot models with selection and
mutation, Lecture Notes in Mathematics 2092, Springer.


\bibitem{DW-96} D.A. Dawson and Y. Wu (1996). Multilevel multitype models of an information system,
\textit{I.M.A. Volume 84, Classical and Modern Branching Processes,} eds. K.B. Athreya and P.
Jagers, Springer-Verlag, pages 57-72.

\bibitem{ET-93} A. Etheridge (1993). Limiting behaviour of two-level measure-branching, Adv. Appl.
Prob. 25, 773-782.
\bibitem{EK2}
S. N. Ethier and T. G. Kurtz (1986/93), {\it Markov processes,
characterization and convergence}, Wiley, New York.

\bibitem{EK3}
S. N. Ethier and T. G. Kurtz (1987), {\it The
infinitely-many-alleles-model with selection as a measure-valued
diffusion}, Lecture Notes in Biomath., vol. {\bf 70},
Springer-Verlag, 72-86.


\bibitem{} A. Gardner and A. Graffen (2009). Capturing the
superorganism: a formal theory of group adaptation, J. Evol.  Biol.
doi:10.1111/j

\bibitem{Gil} M.E. Gilpin (1975). Group Selection in Predator-Prey Communities, Princeton Univ. Press.

\bibitem{} C.J. Goodnight (2005). Multilevel selection: the evolution of cooperation in non-kin groups, Population Ecology 47, 3-12.



\bibitem{GP-06} O. G\"{o}rnerup and J.P.Crutchfield (2006).  Objects that make objects: the population dynamics of structural complexity, J.R. Soc. Interface 3, 345-349.

\bibitem{GHW-95} L.G. Gorostiza, K.J. Hochberg and A. Wakolbinger (1995). Persistence of a critical
super-2 process, J. Appl. Prob. 32, 535-540.

\bibitem {GLW} A. Greven, V. Limic and A. Winter (2005). Representation theoremes for interacting
Moran models, interacting Fisher-Wright diffusions and applications,  EJP 10, 1286-1358.


\bibitem{H64} W.D. Hamilton (1964). The genetical evolution of social behavior, I,II, J. of Theor.
Biol. 7, 1-52.

\bibitem{} I.A. Hanski and M.E. Gilpin (1997). Metapopulation biology: ecology, genetics and
evolution, Academic Press.


\bibitem{} J. Henrich (2003). Cultural group selection, coevolutionary processes and large-scale cooperation, J. Econ. Beh. \& Organization
53, 3-35 and 127-143.

\bibitem{HT-03} P. Hogeweg and N. Takeuchi (2003). Multilevel selection in models of prebiotic
evolution: compartments and spatial organization, Origin of Life and Evolution of the Biosphere 33, 375-403.



\bibitem{Kel-99} L. Keller (1999) Levels of selection in evolution, Princeton University Press.




\bibitem{K-83} M. Kimura (1983). Diffusion model of intergroup selection, with special reference to evolution of an altruistic gene, Proc. Nat. Acad. Sci. 80, 6317-6321.

\bibitem{K-86} M. Kimura (1986). \ Diffusion model of population genetics incorporating group
\ selection, with special reference to an altruistic trait, Lecture Notes in
Math. 1203, 101-118, Springer.

\bibitem{} M. Kimura (1984). Evolution of an altruistic trait
through group selection studied by the diffusion equation method,
IMA J. Math. Applied in Medicine and Biology 1, 1-15.

\bibitem{} T. Kulich and J. Flegr (2011). Positive effects of
multiple gene control on the spread of altruism by group selection, J. Theor. Biol. 234. E. G.

\bibitem{L10} E.G. Leigh, Jr. (2010). The group selection controversy. Journal of Evolutionary Biology 23: 6-19.

\bibitem{} L. Lehmann, L. Keller, S. West, D. Roze (2007). Group selection and kin selection: two concepts but one process, PNAS 104, no. 16, 6736-6739.

\bibitem{} S. Lessard  (2009). Diffusion approximations for one locus
multi-allele kin selection, mutation and random drift in
group-structured populations, a unifying approach to selection in
population models, J. Math. Biol.

\bibitem{LVB-08} S. Lion and M.  van Baalen (2008). Self-structuring in spatial evolutionary ecology, Ecology Letters 11, 277-295.


\bibitem{LJD-11} S. Lion, V.A.A. Jansen and T. Day (2011).  Evolution in structured populations: beyond the kin versus group debate,
 Trends in Ecology and Evolution, 26, 193-201.

\bibitem{L-63} G.G. Lorentz (1963). The degree of approximation by
polynomials with positive coefficients, Math. Anal. 151, 239-251.

\bibitem{SRMK-12} S. Luo, M. Reed, J.C. Mattingly and K. Koelle (2012). The impact of host immune status
on the within-host and population dynamics of antigenic immune excape, J. Royal Soc. Interface.

\bibitem{S-13} S. Luo (2013). A unifying framework reveals key properties of multilevel selection,
J. Theor. Biol. 341, 41-52.

\bibitem{LM15} S. Luo and J. Mattingly (2015). Scaling limits of a model for selection at two
scales, arXiv:1507.00397v1.


\bibitem{MS64} J. Maynard Smith (1964). Groups secelction and kin selection, Nature 201, 1145-1147.

\bibitem{MS76} J. Maynard Smith (1976). Group Selection, The Quarterly Review of Biology 51, 277-283.

\bibitem {} J. Maynard Smith (1998). The units of selection, Novartis Found. Symp. 213: 203-11.

\bibitem{} D.W. McShea and M.A. Changizi (2003). Three puzzles in hierarchical evolution, Integr.
Comp. Biol. 43, 74-81.




\bibitem{N} O. Nerman (1981). {\em On the convergence of
supercritical general (C-M-J) branching processes}, Zeitschrift f.
Wahrscheinlichkeitsth. verw. Gebiete, 57, 365-395.

\bibitem{Nov} M.A. Nowak (2006) Evolutionary Dynamics, Harvard Univ. Press

\bibitem{MTW} M.A. Nowak, C.E. Tarnita and E.O. Wilson (2010) The evolution of eusociality, Nature 466, 1057-1062.
\bibitem{Nov2} M.A. Nowak, R. Highfield (2011) Supercooperators, Simon and Schuster.

\bibitem{OS-87} Y. Ogura and N. Shimakura (1987) Stationary solutions and their stability for
Kimura's diffusion model with intergroup selection, J. Math. Kyoto Univ. 27-2, 305-347.


\bibitem{} S. Okasha (2003). Recent work on the levels of selection problem, Human Nature Review 3, 349-356.

\bibitem{} S. Okasha (2005). Maynard Smith on the levels of selection question, Biol. and
Philosophy, Springer.

\bibitem{O-06} S. Okasha (2009). Evolution and the Levels of Selection, Oxford Univ. Press.

\bibitem{Petal} M. Perc, J. G\'omez-Garde\~{n}es, A. Szolnoki, L.M. Floria and Y. Moreno (2013).
Evolutionary dynamics of group interactions on structured populations: a review, J.R. Soc.
Interface 10, 20120997.

\bibitem{P-02} J. Paulsson (2002). Multileveled selection on plasmid replication, Genetics 161, 1373-1384.

\bibitem{} D.M. Rand (2001) The units of selection on mitochondrial DNA, Annu. Rev. Ecol. Syst. 32, 415-448.

\bibitem{} H. Kern Reeve and Bert H\"olldober (2007). The emergence
of a superorganism through intergroup competition, PNAS 104,
9736-9740.

\bibitem{RM-01} D. Roze and R.E. Michod (2001)  Mutation, multilevel selection and the evolution of propagule size during
the origin of multicellularity, Amer. Naturalist 158, 638-653.

\bibitem{VSE} J. van Veelen M, Garcia, M.W. Sabelis and M. Egas (2012). Group selection and inclusive fitness are not
equivalent; the Price equation vs.  models and statistics. J Theor Biol 299.


\bibitem{} R.H. Schonmann, R. Vincente and N. Caticha (2012).
Altruism can proliferate through group/kin selection despite high
random gene flow, arXiv: 1208.0863v2.

\bibitem{Sh-87} T. Shiga (1987). Existence and
uniqueness of solutions for a class of non-linear diffusion
equations, J. Math. Kyoto Univ. 27, 195-215.


\bibitem{} T. Shiga (1982), \emph{Continuous time multi-allelic
stepping stone models in population genetics}, J. Math Kyoto Univ. 22-1, 1-40.

\bibitem{}
T. Shiga and  Uchiyama (1986), \emph{Stationary states and the stability of the stepping stone
model involving mutation and selection}, Prob. Th. Rel. Fields {\bf 73}, 87-117.


\bibitem{Sm-85} N. Shimakura (1985). Existence and uniqueness of solutions for a diffusion
model of intergroup selection, J. Math. Kyoto Univ. 25, 775-788.


\bibitem{S-W} E. Sober and D.S. Wilson (1998). Unto Others: The Evolution and Psychology of Unselfish Behavior, Harvard University Press.

\bibitem{SD-87} E. Szathm\'ary and L. Demeter (1987).
Group selection of early replicators and the origin of life. Journal of Theoretical Biology 128, 463-486.


\bibitem{SEP} Stanford Encyclopedia of Philosophy (2012). Units and
levels of Selection.

\bibitem{} N. Takeuchi and P. Hogeweg (2009), Multilevel selection in models of prebiotic evolution II, a direct comparison of compartmentalization and spatial self-organization, PLOS Comp. Biol. Vol 5, Issue 10, e100542.

\bibitem{TID} P.D. Taylor, A.J. Irwin and T. Day (2000) Inclusive fitness in finite deme-structured and stepping-stone populations, Selection 1-3, 153-163.

\bibitem{TDW} P.D. Taylor, T. Day and G. Wild (2007). From inclusive fitness to fixation probability in homogeneous strucutred populations,
J. Thoer. Biol.  249, 101-110.

\bibitem{TSN-05} A. Traulsen, A.M. Sengupta and M.. Nowak (2005). Evolutionary dynamics on two levels, J. Theor. Biol. 235, 393-401.

\bibitem{TN-06} A. Traulsen and M.A. Nowak (2006) Evolution of cooperation by multilevel selection, Proc. N.A.S. 103, 10952-10955.

\bibitem{TCH-05} A. Traulsen, J.C. Claussen and C. Hauert (2005). Coevolutionary dynamics: from finite to infinite populations,
Phys. Rev. Letters 95, 238701-1 - 238701-4.

\bibitem{Tur} P. Turchin, T.E. Currie, E.A.L. Turner and S. Gavrilets (2013), War, space and the
evolution of Old World societies, PNAS 110, 16384-16389.

\bibitem{vail} J. Vaillancourt (1988). On the existence of random McKean-Vlasov limits for
triangular arrays of exchangeable diffusions, Stochastic Anal. Appl. 6, 431-446.
\bibitem{S-3} M. van Veelen, S. Luo and B. Simon (2014). A simple model of group selection that
cannot be analyzed by inclusive fitness, J. Theor. Biol.

\bibitem{} M. van Veelen, J. Garcia, M.W. Sabelis and M. Egas (2012). Group selection and inclusive fitness are not equivalent: the Pric
equation vd. models and statistics, J. Theor. Biol. 299, 64-80.

\bibitem{} A.E. Vinogradov (2004). Evolution of genome size: multilevel selection, mutation bias or dynamical chaos, Current
Opinion in Genetics and Development 14, 620-626.

\bibitem{} M.J. Wade et al (2010) Multilevel and kin selection in a connected world, Nature vol. 463, E8-E9.

\bibitem{W66} G.G. Williams (1966). Adaptation and Natural Selection: A Critique of Some Current Evolutionary Thought, Princeton Univ. Press.

\bibitem{} G.G. Williams (1975) Sex and Evolution, Princeton Univ. Press.


\bibitem{DSW} D.S. Wilson (1975). A theory of group selection, PNAS 72, 143-146.
\bibitem{WW} D.S. Wilson and E.O. Wilson (2007). Rethinking the theoretical foundation of sociobiology,
The Quarterly Review of Biology 82, 327-348.

\bibitem{EOW-73} E.O. Wilson (1973). Group selection and its significance for ecology, Bioscience
23, 631-638.
\bibitem{Wu-94} Y. Wu (1994). Asymptotic Behavior of the Two Level Measure Branching Process, Ann. Probab
22, 854-874.

\bibitem {WE62} V.C. Wynne-Edwards (1962). Animal dispersion in relation to social behaviour, Oliver
and Boyd.

\bibitem {} V.C. Wynne-Edwards (1986). Evolution through group selection, Blackwell.
\end{thebibliography}
\end{document}